\documentclass{birkjour}
\usepackage{amsmath}
\usepackage{hyperref,enumerate}
\usepackage{amssymb}
\usepackage{enumitem}
\usepackage{cancel}
\usepackage{graphicx}
\usepackage[usenames, dvipsnames]{color}
\usepackage{amsthm}
\def\bel{\begin{equation}\label}
\def\eeq{\end{equation}}
\newtheorem{Theorem}{Theorem}[section]
\theoremstyle{Definition}
\newtheorem{Definition}[Theorem]{Definition}
\theoremstyle{Remark}
\newtheorem{Remark}[Theorem]{Remark}

\numberwithin{equation}{section}
\newtheorem{Lemma}[Theorem]{Lemma}

\newtheorem{Proposition}[Theorem]{Proposition}
\newtheorem{Corollary}[Theorem]{Corollary}

\def\ds{\displaystyle}
\def\bel{\begin{equation}\label}
\def\eeq{\end{equation}}
\def\cS{\mathcal{S}}
\def\T{\mathbf C}

\def \p {{p_0}}

\newcommand{\x}{{\mathscr{X}}}
\def \l {{\bar l}}

\def \R {{\mathbb R}}
\def \C {{\bf C}}
\newcommand{\N} {{\mathbb N}}

\def \d {\mathbf d}
\def \ol {\overline}

\def \vsm{\vskip 0.3 truecm}

\usepackage[mathscr]{euscript}

\begin{document}
\title[]{Stabilizability in optimal control}
\author{Anna Chiara Lai}
\address{A. C, Lai, Dipartimento di Scienze di Base e Applicate per l'Ingegneria,
Sapienza Universit\`a di Roma, Via Scarpa 16, 00181, Roma, Italy\\
Telefax (39)(06)442401836,\,\, Telephone (39)(06)  49766555}
\email{anna.lai@sbai.uniroma1.it}
\author{Monica Motta}
\address{M. Motta, Dipartimento di Matematica,
Universit\`a di Padova\\ Via Trieste, 63, Padova  35121, Italy\\
Telefax (39)(49) 827 1499,\,\, Telephone (39)(49) 827 1368}  
\email{motta@math.unipd.it}
\thanks{This research is partially supported by the  Padua University grant SID 2018 ``Controllability, stabilizability and infimum gaps for control systems'', prot. BIRD 187147, and by the Gruppo Nazionale per l' Analisi Matematica, la Probabilit\`a e le loro Applicazioni (GNAMPA) of the Istituto Nazionale di Alta Matematica (INdAM), Italy.}
\subjclass{ 93D05,  93D20, 49J15}   
\keywords{ Asymptotic stabilizability,  Discontinuous feedback law, Optimal Control,   Exit-time problems}

\begin{abstract}
We extend the classical concepts of sampling and Euler solutions  for control systems associated to discontinuous feedbacks  by considering also the corresponding costs.  In particular, we introduce the notions of Sample and Euler stabilizability to a closed target set  $\C$  {\it with $(\p,W)$-regulated cost},  for some continuous, state-dependent function $W$ and some constant $\p>0$:  it roughly means that we require the existence of  a stabilizing feedback $K$  such that all the corresponding sampling and  Euler solutions starting from a point $z$   have   {\em suitably defined}  finite costs, bounded above by  $W(z)/\p$. Then, we show how the existence of a special, semiconcave  Control Lyapunov Function $W$, called $\p$-Minimum Restraint Function,  allows us to construct explicitly such a feedback $K$.  
When  dynamics and Lagrangian are Lipschitz continuous in the state variable,  we prove that  $K$ as above can be still obtained   if there exists  a $\p$-Minimum Restraint Function  which is  merely Lipschitz continuous.   An example on the stabilizability with $(\p,W)$-regulated cost of the nonholonomic integrator control system associated to {\em any cost} with bounded Lagrangian illustrates the results.  
 \end{abstract}
  
\maketitle

 \section{Introduction}
 In this paper we investigate  in an optimal control perspective the stabilizability to a set  $\T\subset\R^n$   of the nonlinear control system 
\bel{Eintro}
\dot x=f(x,u), \ \ u\in U,  
\eeq 
to which we  associate a
  cost of the form
\bel{minprobintro}
  \int_ 0^{{T}_{x} } l(x(\tau),u(\tau))\, d\tau,  
\eeq
where  $l\ge0$, and  $T_x\le+\infty$ --the {\em exit-time} of $x$-- verifies
\bel{Cintro}
 x(t)  \in \R^n\setminus\C \ \ \text{for all $t\in[0,T_x)$}, \quad \lim_{t\to {T}_{x}^-}  \d (x(t))=0
\eeq
(for any $y\in\R^n$,  $\d(y)$ denotes the  distance of  $y$ from   $\T$). 
In particular, for any $r>0$, we set $B_r(\C):=\{y\in\R^n: \ \d(y)\le r\}$ and assume that:
\vskip 0.2 truecm
\noindent {\bf (H0)} the target set $\T\subset\R^n$ is closed, with compact boundary;  the control set  $U\subseteq\R^m$ is  closed, not necessarily bounded; the functions $f:(\R^n\setminus\C)\times U\to\R^n$, $l:(\R^n\setminus\C)\times U\to[0,+\infty)$ are  uniformly continuous on ${\mathcal K}\times U$   for  any compact set ${\mathcal K}\subset \R^n\setminus\C$;  for any $R>0$ there is some  $M(R)>0$ such that  
			\begin{equation}\label{boundres} 
			|f(x,u)|\le M(R), \quad l(x,u)\le M(R) \qquad\forall(x,u)\in (B_R(\C)\setminus \C)\times U.
			\end{equation}
\vskip 0.2 truecm
\noindent Hence,  for any admissible control, given by a  function $u\in L^\infty_{loc}([0,T_x), U)$\,\footnote{That is, $u:[0,T_x)\to U$ is a measurable function which is essentially bounded on any subinterval $[0,T]\subset[0,T_x)$.},  every Cauchy problem associated to  \eqref{Eintro}   has in general multiple solutions  and  the cost may be finite even if $T_x=+\infty$. 
 
In order to obtain sufficient conditions for the stabilizability  of the system with regulated cost, we consider the Hamiltonian  
\bel{Ham}
\ds H(x,\p,p):=\inf_{u\in U}\big\{\langle p\,,\,f(x,u)\rangle+\p\,l(x,u)\big\},
\eeq 
 and   the following notion, firstly introduced in  \cite{MR13} (in a slightly weaker form).
 \begin{Definition}[$\p$-Minimum Restraint Function]\label{defMRF}
   Let $W:\ol{\R^n\setminus {\T}}\to[0,+\infty)$ be a continuous function, which is  locally semiconcave, positive definite, and  proper on $\R^n\setminus\T$. We say that $W$  is a \emph{$\p$-Minimum Restraint Function -- in short, $\p$-MRF --   for some $\p\ge0$,}  if  it verifies the   {\emph{ decrease condition}}:
    \bel{MRH} 
    H (x,\p, D^*W(x) )\le-\gamma(W(x)) \quad \forall x\in {{\R^n}\setminus\T},  \,\footnote{This  means that $H(x,\p, p )\le-\gamma(W(x))$ for every $p\in D^*W(x)$, where $D^*W(x)$ denotes the set of reachable gradients of $W$ at $x$ (see  Subsection \ref{Sprel}).} 
    \eeq
 for some continuous, strictly increasing function $\gamma:(0,+\infty)\to(0,+\infty)$.
\end{Definition}

 A $\p$-Minimum Restraint Function is at once a  Control Lyapunov Function for  \eqref{Eintro} and, by \cite[Prop. 5.1]{MR13}, also a strict viscosity supersolution to the Hamilton-Jacobi-Bellman equation
	$$
	\sup_{u\in U}\left\{-\langle DW(x)\,,\,f(x,u)\rangle-\p\,l(x,u)\right\}-\gamma(W(x)) =0, \quad x\in\R^n\setminus\C. 
	$$   Hence, when a $\p$--Minimum Restraint Function exists, on the one hand,  the (open-loop) global asymptotic controllability of  the control system \eqref{Eintro}   to $\T$ --namely, that for  any initial condition  $z\in\R^n\setminus\C$  there is an admissible trajectory-control pair  $(x,u)$ to \eqref{Eintro} with $x(0)=z$,  such that $\lim_{t\to {T}_{x}^-}  \d (x(t))=0$ in a certain uniform and stable manner that we will not dwell upon here--  is expected  (see e.g.\cite{Sonn83,SS95,Bacc01}). On the other hand, if $\p>0$,  the existence of an admissible trajectory-control  pair $(x,u)$ with $x(0)=z$ satisfying the cost estimate
\bel{Wbounda}
\int_0^{T_x}l(x(t),u(t))\,dt\le  \frac{W(z)}{\p} 
\eeq	
follows by known optimality principles  (see e.g.  \cite{So99,M04,MS15}).
The main contribution of   \cite{MR13,LMR16}   was to prove  that the existence of a $\p$-Minimum Restraint Function $W$ for some $\p>0$  allows  to produce a pair $(x,u)$ that meets {\em both} of these properties.  
 
A related and important goal  is, given a $\p$-Minimum Restraint Function $W$ for some $\p>0$, to provide a state feedback $K:\R^n\setminus\C\to U$ such that the system $\dot x(t) =f(x(t),K(x(t))$ is {\em globally asymptotically stable to $\C$  and has $(\p,W)$-regulated cost}, that is, such that for any stable trajectory $x$ with $x(0)=z$,   the  cost   $\int_0^{T_x}l(x(t),K(x(t)))\,dt$  is not greater than $W(z)/\p$.  
In this paper  we address the question, left by  \cite{MR13,LMR16} as an open problem, of how to define such a feedback law  through the use of  $W$. In the ideal case in which $W$ is differentiable and there exists a continuous  feedback $K(x)$ such that 
$
 \langle DW(x)\,,\,f(x,K(x))\rangle+\p\,l(x,K(x)) \le-\gamma(W(x))$ for all $x\in\R^n\setminus\C,
$ 
one  easily derives global asymptotic stabilizability with $(\p,W)$-regulated cost. However,  it is a classical matter in nonlinear  control systems that a differentiable Control Lyapunov Function $W$ may not exist and, even if a smooth $W$ exists, a continuous feedback $K$ does not generally exist (see e.g.  \cite{Arts83,Brockett,SS80,Sonn83,SS95,CPT95}).  From here,   the need of considering the nonsmooth version  \eqref{MRH} of the decrease condition and of defining a {\em discontinuous feedback}  $K:\R^n\setminus\C\to U$, which, because of the   unboundedness of $U$,  we can only  assume {\em locally bounded} (see Prop. \ref{3.3} below). In  particular, we   might have $\lim\sup_{x\to\bar x\in\partial\C}|K(x)|=+\infty$.

Identified a feedback $K$ as above, the main issue is   how to interpret  the  discontinuous differential equation and the associated exit-time cost, so that the control system \eqref{Eintro} can be stabilized to the target $\C$ with $(\p,W)$-regulated cost   by  $K$.   For the trajectories of $\dot x(t) =f(x(t),K(x(t))$,  we simply adapt to our setting the nowadays classical notions of sampling and Euler solutions in \cite{CLSS,CLRS}, inspired by  differential games theory  \cite{KS}.   However, our primary objective  is to introduce  {\em a  suitable concept of cost}  associated to a stable sampling or  Euler solution starting from      $z$,  so that such cost  is  bounded above by $W(z)/\p$.  Postponing  the precise definitions to Section \ref{Sdef},  given some constants $r$, $R$ such that $0<r<R$,  we call a sampling pair $(x,u)$  {\em $(r,R)$-stable} when, starting from some $z$ with $r<\d(z)\le R$,  $x$ reaches in a uniform manner the $r$-neighborhood $B_r(\C)$ of the target and, after a time $\bar T_x^r$,   remains  definitely in $B_r(\C)$.  In this case, for all $t\ge 0$ we define the corresponding sampling  cost  as
$$x^0(t):=\int_0^{\min\{t,\bar T_x^r\}}l(x(s),u(s))\,ds$$  and show that $x^0(t)\le W(z)/\p$.  The difficulty in proving  the latter inequality lies in the fact that  $\bar T_x^r$  may  not  be the first instant in which $x$ enters $B_r(\C)$. Consequently, we must estimate the cost in a time interval where  we basically have no information on   $x$, except that it is in $B_r(\C)$ (see Subsection \ref{Subsample} below).  Nevertheless, this is the correct notion of sampling cost. Indeed, let us now define an Euler cost-solution pair $(\x^0,\x)$ 
 as the locally uniform limit on $[0,+\infty)$ of a sequence of sampling cost-trajectory pairs as above,   when the sampling times tend to zero. We  obtain that $\x$  approaches uniformly asymptotically  $\C$,  while $\lim_{t\to T_\x^-}\x^0(t)\le   W(z)/\p$, where  $T_\x\le+\infty$ is the exit-time of $\x$, as in \eqref{Cintro} (see Subsection \ref{3.2}).

Furthermore,  inspired by \cite{R00},  we prove that,  when $f$ and $l$ are locally Lipschitz continuous  in $x$  uniformly w.r.t. $u\in U$   up to the boundary of $\C$,  the  existence of a $\p$-Minimum Restraint Function   $W$ with $\p>0$,  possibly not locally semiconcave but merely locally Lipschitz continuous  on $\overline{\R^n\setminus\C}$, still guarantees stabilizability  of   \eqref{Eintro} to $\C$  with  $(p_0,W)$-regulated cost,  in the sample and Euler sense (see Theorem  \ref{TLip} and Corollary \ref{CLip}).  This result can be  useful for the effective implementation of our feedback construction, as shown by the example in Section  \ref{SecEx}.

  We are  motivated to consider $U$ unbounded and the minimization in the decrease condition  on the whole set $U$ (and not on bounded subsets, as usual) mainly because these assumptions are  met, for instance, in the stabilization of mechanical systems with vibrating controls. These are nonlinear control systems, affine or quadratic in the derivative of the control,   which is considered as an {\em impulsive control} (see \cite{BR10}).  In particular,  the  reparameterized  systems  usually introduced  in the study of  control-polynomial systems satisfy (H0).  As a consequence, as shown in  \cite{LM19},  the results of the present paper are  the starting point for the stabilizability  (with and without  regulated cost) of impulsive control systems.
 In fact, our assumptions include also  the case  where  $U$ is bounded and $f$ and $l$ are continuous on $\R^n\times U$. For $U$ unbounded, they are satisfied, for instance, by the class of control problems in which the input appears inside a saturation nonlinearity, such as
 $
f(x,u)=f_0(x)+\sum_{i=1}^m f_i(x)\sigma_i(u)$, $l(x,u)=l_0(x)+l_1(x)|\sigma_0(u)|$,
 where  $l_0$, $l_1$,  $f_0$, $f_1,\dots, f_m\in C(\R^n)$ and $\sigma_0,\dots,\sigma_m$ are bounded, uniformly continuous maps on $U$. The stabilizability of such control systems  plays a relevant role  both in the literature and in the applications (see  e.g.\cite{BaLin00, Ch01, ChLSo96, SSY94}). 
 
Finally,  the value function 
$$
 V(z):=\inf_{(x,u), \ x(0)=z}\, \int_ 0^{{T}_{x} } l(x(\tau),u(\tau))\, d\tau,
$$
 is clearly bounded above by any $\p$-Minimum Restraint Function divided by $\p$.  Hence 
our approach could be useful to design  approximated optimal closed-loop  strategies, when there exists a sequence of $\p$-Minimum Restraint Functions  approaching $V$, as in \cite{MS15},  or at least to obtain ``safe'' performances, keeping the cost  under the value $W$.   Moreover, when $V\le W/\p$, then $V$ is continuous  on the target’s boundary and this is crucial to establish comparison, uniqueness, and robustness properties for the associated Hamilton–Jacobi–Bellman equation  \cite{M04, MS14, MS15} and to study associated asymptotic and ergodic problems \cite{MS215}.  
From this PDE point of view, problem \eqref{Eintro}--\eqref{minprobintro} has been widely investigated;  a likely incomplete bibliography, also containing applications (for instance, the F\"uller and shape-from-shading problems), includes \cite{IRa95, Ma04, CaSi99, So99}  and the references therein.

 The paper is organized as follows.  In the remaining part of the Introduction  we  provide  some preliminary definitions and notation.  In Section \ref{Sdef} we define precisely the  {\it sample} and {\it Euler  stabilizability} of   \eqref{Eintro} to $\C$  {\it with  $(\p,W)$-regulated cost}, which is shown to be guaranteed by the existence of a      $\p$-Minimum Restraint Function in Theorem \ref{Tintro1}, our main result.  In Section \ref{RMRF} we consider the case of  Lipschitz continuous data, postponing the  proofs  in the Appendix.  An example  on  the stabilizability with regulated cost of the non-holonomic integrator control system concludes the paper (see Section  \ref{SecEx}).

 \subsection{Notation and preliminaries}\label{Sprel}
  For every $r\geq 0$ and $\Omega\subset\R^N$, we set $B_r(\Omega):=\{x\in \R^N\mid d(x,\Omega)\leq r\}$, where $d$ is the usual Euclidean distance. When $\Omega=\{z\}$ for some $z\in\R^N$, we also use of the notation $B(z,r):=B_r(\{z\})$.  For any map  $F:\Omega\to\R^M$  \linebreak  we  call   {\it modulus   (of continuity) of $F$}  any increasing, continuous function  \linebreak $\omega:[0,+\infty)\to [0,+\infty)$ such that $\omega (0)=0$,  $\omega(r)>0$ for every $r>0$ and $|F(x_1)-F(x_2)|\le\omega(|x_1-x_2|)$ for all $x_1$, $x_2\in \Omega$. We   use $\overline{\Omega}$, $\overset{\circ}{\Omega}$ to denote the closure and the interior of  the set $\Omega$, respectively. 

Let us summarize  some basic notions in nonsmooth analysis  (see  e.g. \cite{CS}, \cite{CLSW}, \cite{Vinter} for a thorough treatment).
\begin{Definition}[Positive definite and proper functions]  Let  $\Omega\subset \R^N$ be an open set with compact boundary. A continuous function $F:\overline{\Omega} \to\R$ is said {\rm positive definite on $\Omega$} if  $F(x)>0$ \,$\forall x\in\Omega$ and $F(x)=0$ \,$\forall x\in\partial\Omega$. The function $F$ is called {\rm proper  on $\Omega$}  if the pre-image $F^{-1}(K)$ of any compact set $K\subset[0,+\infty)$ is compact.
\end{Definition}

 \begin{Definition} {\rm (Semiconcavity). }\label{sconc} Let $\Omega\subseteq\R^N$. A  continuous function $F:\Omega\to\R$  is said to be {\rm  semiconcave  on $\Omega$} if   there exists $\rho>0$ such that
 $$
 F(x)+F(\hat x)-2F\left(\frac{x+\hat x}{2}\right)\le \rho|x-\hat x|^2,
 $$
 for all   $x$, $\hat x\in \Omega$ such that $[x,\hat x]\subset\Omega$. The constant $\rho$ above is called a {\rm semiconcavity constant} for $F$ in $\Omega$.  $F$ is said to be {\rm  locally semiconcave  on $\Omega$} if it semiconcave on every compact subset of $\Omega$.
 \end{Definition}
\noindent We remind that locally semiconcave functions are locally Lipschitz continuous. Actually, they are twice differentiable almost everywhere.
 \begin{Definition}\label{D*}{\rm (Limiting gradient). }  Let $\Omega\subseteq\R^N$ be an open set, and let  $F:\Omega\to\R$  be a locally Lipschitz continuous function.  For every $x\in \Omega$ we call  {\rm set of limiting gradients} of $F$ at $x$, the set:
$$
D^*{F}(x) := \Big\{ w\in\R^N: \ \  w=\lim_{k}\nabla {F}(x_k), \ \  x_k\in DIFF(F)\setminus\{x\}, \ \ \lim_k x_k=x\Big\},
$$
where   $\nabla$ denotes the classical gradient operator and $DIFF(F)$ is the set of differentiability points of $F$.   
\end{Definition}
\noindent The set-valued map $x\leadsto D^*F(x)$ is upper semicontinuous on $\Omega$,  with nonempty, compact, and not necessarily convex  values.  When $F$ is a locally semiconcave function,  $D^*{F}$ coincides  with the {\em limiting subdifferential} $\partial_LF$, namely,
  $$
  D^*F(x)=\partial_LF(x) := \Big\{\lim \,  p_i: \  p_i\in \partial_PF(x_i), \ \lim\,  x_i=x\Big\} \quad \forall x\in\Omega.
  $$
As usual,   for every  $x\in\Omega$, the  {\em proximal subdifferential} of $F$ at $x$ is given by
$$\begin{array}{l} \partial_PF(x):= \Big\{p \in \R^N\,:\, \exists\, \sigma,\eta>0\, \text{ s.t.,}   \ \forall y\in B(x,\eta), \\
\quad\qquad\qquad \qquad  F(y)-F(x)+\sigma |y-x|^2\geq \langle p,y-x\rangle\Big\}.  
\end{array}$$ 
For locally Lipschitz continuous functions,   the {\em Clarke subdifferential}  $\partial_CF(x)$ of $F$ at $x$, can be defined as $\partial_CF(x):=$co$\partial_LF(x)$. 
 Finally,  locally semiconcave functions  enjoy the following properties (see \cite[Propositions 3.3.1, 3.6.2]{CS}).
\begin{Lemma}\label{Lscv} Let $\Omega\subseteq\R^N$ be an open set and let  $F:\Omega \to\R$   be a   locally  semiconcave function. Then for any compact set $\mathcal{K}\subset \Omega$ there exist some positive constants  $L$ and $\rho$ such that, for any $x\in \mathcal{K}$ 
 \footnote{The inequality (\ref{scvintro}) is usually formulated with the proximal superdifferential  $\partial^P F$. However, this does not make a difference here since $\partial^P F=\partial_C F=co D^* F$ as soon as $F$ is locally semiconcave. Hence (\ref{scvintro}) is true in particular for  any $p\in D^*F(x)$. }, 
\bel{scvintro}
\begin{array}{l}
F(\hat x)-F(x)\le  \langle p,\hat x-x \rangle+\rho|\hat x-x|^2, \\ [1.5ex]
|p|\le L    \quad \forall p\in D^*F(x),
\end{array}
\eeq
for any point  $\hat x\in\mathcal{K}$ such that $[x,\hat x]\subset  \mathcal{K}$.   
\end{Lemma}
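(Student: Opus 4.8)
The plan is to reduce the statement to the classical fact that a function which is semiconcave with a \emph{linear} modulus on a convex set is, up to an explicit quadratic correction, concave there, and then to exploit the description of $D^*F$ as a limit of classical gradients built into Definition \ref{D*}.

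First I would fix the compact set $\mathcal{K}\subset\Omega$ and, using that $\Omega$ is open and $\mathcal{K}$ compact, choose $\delta>0$ small enough that the closed $\delta$-neighbourhood $\mathcal{K}':=B_\delta(\mathcal{K})$ is still contained in $\Omega$; note that $\mathcal{K}'$ is compact and $\mathcal{K}\subset\overset{\circ}{\mathcal{K}'}$. By local semiconcavity, $F$ admits a semiconcavity constant $\rho_0>0$ on $\mathcal{K}'$, and, since locally semiconcave functions are locally Lipschitz continuous (as recalled above), $F$ is Lipschitz on $\mathcal{K}'$, say with constant $L>0$. These $L$ and $\rho:=2\rho_0$ will be the constants in the statement. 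The reason for enlarging $\mathcal{K}$ to $\mathcal{K}'$ is that, whenever $x,\hat x\in\mathcal{K}$ satisfy $[x,\hat x]\subset\mathcal{K}$, the whole ``tube'' $T_{x,\hat x}:=\{y:\ d(y,[x,\hat x])\le\delta\}$ — a \emph{convex} set, being the $\delta$-enlargement of a segment — is contained in $\mathcal{K}'$; this uniform room is exactly what will make the constants independent of the particular pair $(x,\hat x)$.

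Next I would prove the quadratic inequality at differentiability points. On a convex set $U\subset\Omega$ on which $F$ is semiconcave with constant $\rho_0$, the function $G(y):=F(y)-2\rho_0|y|^2$ is midpoint concave: this follows from the defining inequality in Definition \ref{sconc} together with the identity $|x|^2+|\hat x|^2-2\big|\tfrac{x+\hat x}{2}\big|^2=\tfrac12|x-\hat x|^2$, and hence $G$ is concave by continuity. At any $x\in U$ at which $F$ (equivalently $G$) is differentiable, concavity of $G$ gives $G(\hat x)-G(x)\le\langle\nabla G(x),\hat x-x\rangle$ for every $\hat x\in U$; substituting $G=F-2\rho_0|\cdot|^2$, using $\nabla G(x)=\nabla F(x)-4\rho_0 x$ and the expansion $|\hat x|^2-|x|^2=|\hat x-x|^2+2\langle x,\hat x-x\rangle$, and simplifying, yields $F(\hat x)-F(x)\le\langle\nabla F(x),\hat x-x\rangle+2\rho_0|\hat x-x|^2$. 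Taking $U$ to be the convex tube $T_{x,\hat x}\subset\mathcal{K}'$ (where the constant $\rho_0$ works) establishes the first line of \eqref{scvintro} for $p=\nabla F(x)$ at differentiability points $x$.

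Finally I would remove the differentiability restriction by the limiting procedure of Definition \ref{D*}. Given $x\in\mathcal{K}$, $\hat x\in\mathcal{K}$ with $[x,\hat x]\subset\mathcal{K}$, and $p\in D^*F(x)$, write $p=\lim_k\nabla F(x_k)$ with $x_k\to x$, $x_k\in DIFF(F)$. For $k$ large, $x_k$ lies in $T_{x,\hat x}$, and hence, by convexity, so does the whole segment $[x_k,\hat x]$; thus the differentiable-point inequality applies at $x_k$, giving $F(\hat x)-F(x_k)\le\langle\nabla F(x_k),\hat x-x_k\rangle+2\rho_0|\hat x-x_k|^2$. Letting $k\to\infty$ and using continuity of $F$ yields $F(\hat x)-F(x)\le\langle p,\hat x-x\rangle+2\rho_0|\hat x-x|^2$, i.e. the first inequality in \eqref{scvintro} with $\rho=2\rho_0$. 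The bound $|p|\le L$ is immediate: each $\nabla F(x_k)$ is the classical gradient at an interior point of $\mathcal{K}'$ of an $L$-Lipschitz function, so $|\nabla F(x_k)|\le L$, and the bound passes to the limit. The one point I would be most careful about is precisely the \emph{uniformity} of $\rho$ and $L$ over all admissible pairs $(x,\hat x)$ — which is what the enlargement to $\mathcal{K}'$ and the convexity of the tubes $T_{x,\hat x}$ secure — while the rest is the standard ``semiconcavity $=$ concavity $+$ quadratic'' computation.
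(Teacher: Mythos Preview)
Your argument is correct. The reduction ``semiconcave $=$ concave $+$ quadratic'' on convex subsets, the computation with $G(y)=F(y)-2\rho_0|y|^2$, the passage to the limit along differentiability points via Definition~\ref{D*}, and the Lipschitz bound on the gradients are all sound; the enlargement to $\mathcal{K}'$ and the use of the convex tubes $T_{x,\hat x}$ cleanly ensure the uniformity of $\rho$ and $L$.

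As for comparison with the paper: the paper does not actually prove Lemma~\ref{Lscv}. It simply records the result as a known property of locally semiconcave functions and cites \cite[Propositions~3.3.1 and~3.6.2]{CS} for the proof. So your proposal is not a different route from the paper's own argument --- there is no argument in the paper --- but rather a self-contained proof that unpacks what the cited reference provides. Your approach is in fact the standard one behind those propositions in~\cite{CS}: Proposition~3.3.1 there gives the quadratic upper bound (with the proximal superdifferential, which here coincides with $\partial_C F=\mathrm{co}\,D^*F$ as the footnote notes), and Proposition~3.6.2 gives the local Lipschitz bound on the generalized gradients.
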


\section{Sample and Euler stabilizability with regulated cost}\label{Sdef}
 Let us  introduce the notions of sampling and Euler solutions with regulated cost.  
Hypothesis  (H0) is assumed  throughout the whole section.
 
\begin{Definition}[Admissible trajectory-control pairs and costs]\label{adm}
For every point $z\in\R^n\setminus\T$, we will say that $(x,u)$ is an  {\rm   admissible trajectory-control pair from $z$}  for the control system 
 \bel{Egen}
 \dot x=f(x,u), 
 \eeq
  if there exists $T_x\le +\infty$ such that
 $u\in L^\infty_{loc}([0,T_x),U)$ and $x$ is a Carath\'eodory solution   of {\rm\eqref{Egen}} in $[0,T_x)$ corresponding to  $u$,  verifying $x(0)=z$ and 
$$
x([0,T_x))\subset \R^n\backslash \T  \ \ 
 \displaystyle \text{and, if $T_{ x}<+\infty$,} \  \ \lim_{t\to  T^-_{ x}} {\bf d}(x(t))=0.  
$$
 We shall use ${\mathcal{ A}}_f({ z})$ to denote the family of 
admissible trajectory-control pairs $(x,u)$  from $z$ for the control system {\rm\eqref{Egen}}.  Moreover, we will call  {\rm cost  associated to $(x,u)\in {\mathcal{ A}}_f({ z})$}  the function 
$$
x^0(t):=\int_0^{t } l(x(\tau),u(\tau))\,d\tau \ \ \forall t\in[0,T_x).
$$
 If $T_x<+\infty$, we extend continuously  $(x^0,x)$ to $[0,+\infty)$,  by setting  
$$
(x^0,x)(t)=\lim_{\tau\to T_x^-}(x^0,x)(\tau) \qquad \forall t\ge T_x. 
$$
From now on,  we will always consider admissible trajectories and associated costs defined on  $[0,+\infty)$.
 \end{Definition}
Observe that for any admissible trajectory-control pair defined on $[0, T_x)$, when $T_x<+\infty$  the above limit exists by (H0).   In particular, this follows by the compactness of $\partial\C$  and the boundedness of $f$ and $l$ in any bounded  neighborhood of the target.     
\vsm
A {\it partition}  of 
$[0,+\infty)$ is a sequence $\pi=(t^j) $ such that
$t^0=0, \quad t^{j-1}<t^j$ \, $\forall j\ge 1$,  and
   $\lim_{j\to+\infty}t^j=+\infty$.  The value $\text{diam}(\pi):=\sup_{ j\ge 1}(t^{j }-t^{j-1})$ is called the {\em diameter} or the {\em sampling time} of the sequence 
$\pi$. 
 A {\it feedback for \eqref{Egen}}   is defined to be any locally bounded function $K:\R^n\setminus\C\to U$.  
\begin{Definition}[Sampling trajectory 
and sampling cost]
\label{Ssol}   Given a  
feedback  $K:\R^n\setminus\C\to U$, a partition $\pi=(t^j)$ of 
$[0,+\infty)$,  and a  point
$z\in\R^n\setminus\C$, a {\em $\pi$-sampling trajectory}  for  \eqref{Egen} from $z$  associated to $K$  is a continuous function $x$ defined by recursively solving   
$$
\dot x(t)=f(x(t),K(x(t^{j-1})) \qquad t\in[t^{j-1},t^j],~ (x(t)\in\R^n\setminus\C)
$$
from the initial time $t^{j-1}$ up to time 
$$\tau^j:=t^{j-1}\vee\sup\{\tau\in[t^{j-1},t^j ]: \ x \ \text{is defined on } [t^{j-1},\tau)\},
$$
 where $x(t^0)=x(0)=z$. In this case, the   trajectory $x$ is defined on the right-open interval from time zero up to time
$t^-:=\inf\{\tau^j: \ \tau^j<t^j\}$.  Accordingly,  for every $j\ge1$, we set
\bel{olc}
u(t):= K(x(t^{j-1})) \quad \forall t\in[t^{j-1},t^j)\cap[0,t^-).  
\eeq
The pair $(x,u)$ will be called a $\pi$-sampling trajectory-control pair of \eqref{Egen} from $z$ (corresponding to the feedback $K$). 
The {\em sampling cost} associated to $(x,u)$ is given by
\bel{Scost}
x^0(t):=\int_0^t l(x(\tau),u(\tau))\,d\tau \quad t\in[0,t^-).
\eeq
  \end{Definition}

\begin{Definition}[Sample stabilizability with $(p_0,W)$-regulated cost] \label{sstab}    A feedback 
$K:\R^n\setminus\C\to U$ is said to {\rm sample-stabilize    \eqref{Egen} to $\C$}  
 if there is a  function 
$\beta\in{\mathcal {KL}}$  satisfying the following: for each pair $0<r<R$  
 there exists  $\delta=\delta(r,R)>0$,  such that, 
for every partition $\pi$ of 
$[0,+\infty)$ with  $\text{\em diam}(\pi)\le\delta$ and for any initial state 
$z\in\R^n\setminus\C$ such that ${\bf d}(z)\le R$, any  $\pi$-sampling  
trajectory-control pair  $(x,u)$  of   \eqref{Egen} from $z$ associated to $K$ belongs to ${\mathcal A}_f(z)$  and verifies:
\bel{betaS}
{\bf d}(x(t))\le\max\{\beta(R,t), r\} \qquad \forall t\in[0,+\infty).
\eeq
Such $(x,u)$ are called \emph{$(r,R)$-stable (to $\C$) sampling trajectory-control pairs}. If the system  \eqref{Egen} admits a sample-stabilizing feedback to $\C$, then it is called \emph{sample stabilizable (to $\C$)}.  

When there exist $\p>0$ and a  continuous map $W:\ol{\R^n\setminus\T}\to[0,+\infty)$ which is  positive definite and proper on $\R^n\setminus\C$, such that  the sampling cost $x^0$ associated to any $(r,R)$-stable sampling  pair $(x,u)$ verifies
\bel{estWc}
 x^0(\bar T_x^r)=
\int_0^{\bar T_x^r} l(x(\tau),u(\tau))\,d\tau\le  \frac{W(z)}{\p} 
\eeq
where 
\bel{bar T}
 \bar T_x^r:=\inf\{t>0: \ \d(x(\tau))\le r \ \ \forall \tau\ge t\},
 \eeq  
   we say that  \eqref{Egen} is   \emph{ sample stabilizable (to $\C$) with $(p_0,W)$-regulated cost.}
\end{Definition}
Observe that, when $\d(z)\le r$,  the time $\bar T_x^r$ may be zero. In this case \eqref{estWc}  imposes no conditions on the cost.
\vsm
Let us now introduce Euler solutions and 
 the associated costs and 
a notion of Euler stabilizability to $\C$ with $(p_0,W)$-regulated cost. 
\begin{Definition}[Euler trajectory
  and Euler cost]\label{EulerS} 
Let $(\pi_i)$ be a sequence of partitions of $[0,+\infty)$ such that $\delta_i:=\text{\em diam}(\pi_i)\to 0$ as $i\to \infty$. Given a feedback 
$K:\R^n\setminus\C\to U$ and $z\in\R^n\setminus\C$, for every $i$, 
let $(x_i,u_i)\in{\mathcal{ A}}_{f}({ z})$ be a  $\pi_i$-sampling trajectory-control pair  of
 \eqref{Egen}  from $z$ associated to $K$ and let $ x_i^0$ be the corresponding cost.    If there exists a map $\x:[0,+\infty)\to  \R^n$, verifying
\begin{align}
  x_i\to \x \qquad \text{locally  uniformly in }[0,+\infty) \label{eul1}
\end{align}
 we call $\x$  an \emph{Euler trajectory  of \eqref{Egen} from $z$} (corresponding to the feedback $K$).
 
If moreover  there is a map $\x^0:[0,+\infty)\to  [0,+\infty)$  verifying
\begin{align}
 x_i^0\to \x^0\qquad \text{locally  uniformly in }[0,+\infty), \label{eul2}
\end{align}
 we call  $\x^0$ the \emph{Euler cost} associated to $\x$.
 \end{Definition}

\begin{Remark}\label{rmkeulercost}{\rm As Euler trajectories are not, in general, classical solutions to the control system  \eqref{Egen},  an  Euler cost   may not coincide with  the integral of the  Lagrangian along the corresponding Euler trajectory,   
for some control.  Nevertheless, this is  true  in  special situations, as, for instance, when the function $l$ is continuous,  bounded and  does not depend on the control, that is $l(x,u)=\tilde l(x)$ for all $(x,u)$. Indeed, in this case  if 
there exists a sequence of sampling trajectories $x_i\to \x$ locally   uniformly in $[0,+\infty)$,  the dominated convergence theorem implies that  the associated costs $x^0_i$   converge locally uniformly to the function $\x^0$ verifying
$$\x^0(t)=  \int_0^t \tilde l(\x(\tau))d\tau \quad \text{for any $t\ge0$.}
$$
Indeed,  fixed $t>0$, for all $s\in[0,t]$  one has
$$
|x_i^0(s)-\x^0(s)|\le \int_0^s|\tilde l(x_i(\tau))-\tilde l(\x(\tau))|\,d\tau \le t\omega(\sup_{\tau\in[0,t]}|x_i(\tau)-\x(\tau)|),
$$
when $\omega$ denotes a modulus of $\tilde l$ on a suitable compact neighborhood of $\x([0,t])$. Therefore, $\sup_{s\in[0,t]} |x_i^0(s)-\x^0(s)|\to0$ as $i\to+\infty$, for every $t>0$.}
\end{Remark}
 
\begin{Definition}[Euler stabilizability with $(p_0,W)$-regulated cost] \label{Estab}
The system \eqref{Egen} is \emph{Euler stabilizable}  to $\C$ with  \emph{Euler stabilizing feedback} $K$, 
if there exists a function $\beta\in{\mathcal {KL}}$ such that for each $z\in 	\R^n\setminus\C$,   every Euler solution $\x $ of \eqref{Egen} from $z$  associated to $K$
verifies
\bel{betaeulerdef}
{\bf d}(\x(t))\le\beta(\d(z),t) \qquad \forall t\in[0,+\infty).
\eeq
When there exist some $\p>0$ and a  continuous map $W:\ol{\R^n\setminus\T}\to[0,+\infty)$ which is positive definite and proper on $\R^n\setminus\T$,  such that every Euler cost $\x^0$ associated to $\x$  verifies
\bel{VSeulerdef}
 \lim_{t\to   T_{\x}^-}\x^0(t)\leq  \frac{W(z)}{\p}  \quad\forall z\in 	\R^n\setminus\C, 
\eeq
where   
$$
\displaystyle T_\x:=\inf  \{\tau\in(0,+\infty]: \  \x([0,\tau))\subset\R^n\setminus\C,   \  \  \lim_{t\to  \tau^-} {\bf d}(\x(t))=0\},
$$
then \eqref{Egen} is said to have a \emph{$(p_0,W)$-regulated cost} (w.r.t.  the feedback $K$). 
 \end{Definition}

  
\section{Main result}\label{SM1}
\begin{Theorem}\label{Tintro1}
Assume hypothesis {\rm (H0)} and let $W$ be a $\p$-MRF with $\p>0$. Then there exists a locally bounded feedback $K:\R^n\setminus\C\to U$  that sample and Euler stabilizes  system \eqref{Egen}   
 to $\C$ with $(p_0,W)$-regulated cost. 
\end{Theorem}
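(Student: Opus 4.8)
## Proof proposal

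\textbf{Overall strategy.} The natural candidate feedback comes from the decrease condition itself: for each $x\in\R^n\setminus\C$, one selects a control $K(x)\in U$ that achieves (up to a controlled margin) the infimum defining $H(x,\p,p)$ for some $p\in D^*W(x)$, so that
$$
\langle p,f(x,K(x))\rangle+\p\, l(x,K(x))\le -\frac{\gamma(W(x))}{2}.
$$
Since $U$ may be unbounded and $D^*W$ is merely upper semicontinuous, such a $K$ exists only as a locally bounded, generally discontinuous selection; its local boundedness on $\R^n\setminus\C$ is what Proposition \ref{3.3} is meant to guarantee (using (H0) and the local semiconcavity of $W$, which bounds $|p|$ on compacta via Lemma \ref{Lscv}). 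The plan is: (i) fix such a $K$; (ii) run a single sampling step and use the semiconcavity inequality \eqref{scvintro} to show $W$ decreases along sampling trajectories, with an explicit error term controlled by the sampling time $\delta$; (iii) chain these estimates to produce the $\mathcal{KL}$ bound \eqref{betaS} and the cost bound \eqref{estWc}; (iv) pass to the limit $\delta\to0$ to obtain the Euler statements.

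\textbf{The key one-step estimate.} Fix a compact annular region $\{r/2\le\d(x)\le R+1\}$; on it $f,l$ are bounded by $M$, $W$ is semiconcave with constants $L,\rho$, and $\gamma(W(\cdot))$ is bounded below by some $\gamma_0>0$ and above by $\gamma_1$. Starting a sampling step at $x(t^{j-1})=y$ with control $v=K(y)$, for $t\in[t^{j-1},t^j]$ one has $|x(t)-y|\le M\delta$, and by \eqref{scvintro} with $p\in D^*W(y)$ the chosen reachable gradient,
$$
W(x(t))-W(y)\le \langle p, x(t)-y\rangle+\rho|x(t)-y|^2
= \int_{t^{j-1}}^{t}\langle p,f(x(s),v)\rangle\,ds+\rho|x(t)-y|^2.
$$
Writing $\langle p,f(x(s),v)\rangle=\langle p,f(y,v)\rangle+\langle p,f(x(s),v)-f(y,v)\rangle$ and using the decrease inequality $\langle p,f(y,v)\rangle\le -\tfrac12\gamma(W(y))-\p\, l(y,v)$ together with the modulus of continuity of $f$ and $l$, one gets, for $\delta$ small enough (depending on $r,R$),
$$
W(x(t))-W(y)+\p\int_{t^{j-1}}^{t} l(x(s),v)\,ds \le -\frac{\gamma_0}{4}(t-t^{j-1}),
$$
valid as long as $x$ stays in the annular region. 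This is the engine: $W$ is a strict Lyapunov function for the discrete scheme, and the ``$W$-decrement plus $\p\times$running cost'' is controlled by the elapsed time.

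\textbf{From the one-step estimate to the theorem.} Summing over steps, $t\mapsto W(x(t))+\p\,x^0(t)$ is non-increasing (up to the region constraint), which immediately yields $\p\,x^0(t)\le W(z)$ as long as the trajectory remains outside $B_r(\C)$, and also forces the trajectory toward the target: $W(x(t))\le W(z)$ gives a bound on $\d(x(t))$ via positive definiteness and properness of $W$, and the strict decrease $\tfrac{\gamma_0}{4}$ forces $W$ below any prescribed level in finite, $R$-dependent time, producing the function $\beta\in\mathcal{KL}$ of Definition \ref{sstab}. The delicate point — the one the authors flag in the Introduction — is that $\bar T_x^r$ in \eqref{bar T} need not be the \emph{first} entry time into $B_r(\C)$: the sampled trajectory can enter $B_r(\C)$, leave it (by at most $M\delta$), and re-enter. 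One must therefore show that once $x$ reaches a slightly smaller neighborhood $B_{r/2}(\C)$ it cannot escape $B_r(\C)$ for $\delta$ small (a trapping argument using $|x(t)-x(t^{j-1})|\le M\delta<r/2$ and the decrease of $W$ near the boundary), and that the extra cost accrued during any excursions inside $B_r(\C)$ — where we have essentially no control on $x$ beyond $l\le M$ — is absorbed. The cleanest way I would handle the latter is to note that each step inside $B_r(\C)$ costs at most $M\delta$, that the decrease estimate still gives $W(x(t^{j}))\le W(x(t^{j-1}))$ whenever the step begins outside a thin shell, and to choose the shell width and $\delta$ so that the total time spent in $B_r(\C)$ before $\bar T_x^r$ (hence the total extra cost) is $O(\delta)\to0$; combined with $\p x^0(\text{first entry})\le W(z)-W(x(\text{first entry}))\le W(z)$, this gives $\p\, x^0(\bar T_x^r)\le W(z)$ in the limit, and in fact directly for the stated non-strict inequality after a short additional argument. \textbf{This trapping/excursion analysis is the main obstacle}; everything else is bookkeeping with moduli of continuity.

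\textbf{The Euler statements.} Given the sampling results uniformly in $\delta\le\delta(r,R)$, the sampling trajectory-cost pairs $(x_i^0,x_i)$ corresponding to a sequence $\delta_i\to0$ are, on each compact time interval, equibounded and equi-Lipschitz (by $l\le M$ and $|f|\le M$ on the relevant region, with a uniform bound since they stay in $B_R(\C)$ by \eqref{betaS}); by Ascoli--Arzelà a subsequence converges locally uniformly to a limit $(\x^0,\x)$, which is then an Euler cost-trajectory pair by Definition \ref{EulerS}. The $\mathcal{KL}$ bound \eqref{betaeulerdef} passes to the limit directly from \eqref{betaS} by letting $r\to0$ (using that $\beta$ does not depend on $r$). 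For the cost bound \eqref{VSeulerdef}: since $\x^0(t)\le W(z)/\p$ for all $t$ (limit of the uniform sampling bound, now valid on all of $[0,+\infty)$ because the $r$-dependent excursion error vanishes), and $\x^0$ is non-decreasing, $\lim_{t\to T_\x^-}\x^0(t)$ exists and is $\le W(z)/\p$. One must also check $\x\in$ (the limit setting makes sense), i.e. that $T_\x$ as defined is consistent with $\x$ approaching $\C$; this follows since $\d(\x(t))\le\beta(\d(z),t)\to0$. I would present the sampling part in full in the main text and, as the excerpt announces, relegate the Lipschitz-data variant (Theorem \ref{TLip}) to the Appendix, replacing \eqref{scvintro} there by a Lipschitz-in-$x$ estimate à la \cite{R00}.
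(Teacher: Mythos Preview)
Your overall architecture matches the paper's: build a locally bounded $W$-feedback via Proposition~\ref{3.3}, derive a one-step decrease estimate from the semiconcavity inequality \eqref{scvintro}, chain the steps, prove a trapping statement, and then pass to the Euler limit by Ascoli--Arzel\`a. The one-step estimate you write is exactly Proposition~\ref{3.5} (cited from \cite{LMR16}), and your trapping claim is the content of Proposition~\ref{pinvariant}; the paper, like you, uses two nested levels (it takes $\hat\mu/4$ and $\hat\mu$ in $W$-sublevel sets where you take $r/2$ and $r$ in distance balls).

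Where your proposal diverges --- and where there is a genuine gap --- is the derivation of the cost bound \eqref{estWc}. You split $[0,\bar T_x^r]$ into time outside $B_r(\C)$ (handled by the one-step estimate) and time inside $B_r(\C)$ (claimed to contribute an ``$O(\delta)$ extra cost''). The $O(\delta)$ claim is neither justified nor needed: nothing in your setup bounds the \emph{number} of excursions into $B_r(\C)$ before $\bar T_x^r$, so ``each step costs $\le M\delta$'' does not by itself give an $O(\delta)$ total. The paper avoids this entirely by observing that your one-step inequality \emph{already contains the running cost}: summing
\[
W(x(t^j))-W(x(t^{j-1}))+\p\int_{t^{j-1}}^{t^j} l\,d\tau\le -\tfrac{\gamma_0}{4}(t^j-t^{j-1})
\]
over \emph{all} steps up to $\bar t:=T_x^{\hat\mu/4}$ (the first time $W$ drops to the inner level $\hat\mu/4$) gives directly $\p\,x^0(\bar t)\le W(z)-W(x(\bar t))\le W(z)$. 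The trapping argument (Proposition~\ref{pinvariant}) then shows $W(x(t))\le\hat\mu$ for all $t\ge\bar t$, hence $\d(x(t))\le r$ for all $t\ge\bar t$, so $\bar T_x^r\le\bar t$ and monotonicity of $x^0$ finishes. In your language: the one-step estimate is valid whenever the step \emph{begins} with $\d\ge r/2$, so it holds on the whole interval $[0,t^{**})$ where $t^{**}$ is the first sampling time with $\d(x(t^{j-1}))<r/2$; trapping from $B_{r/2}(\C)$ then gives $\bar T_x^r\le t^{**}$. No excursion accounting is required.

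Two smaller points. First, the paper works throughout in $W$-sublevel sets and translates to $\d$-balls only at the end via Lemma~\ref{LW=d}; this is cleaner because the one-step estimate gives monotonicity of $W$, not of $\d$, so the invariance argument in Proposition~\ref{pinvariant} is most natural in $W$-levels. Second, for the Euler part the paper needs a uniform lower bound on $\bar T_{x_i}^{r(\delta_i)}$ (Lemma~\ref{Leps}) and the invertibility of $r\mapsto\delta(r,R)$ (Lemma~\ref{Lrdelta}) to make the limit $r(\delta_i)\to 0$ meaningful; you should make these steps explicit rather than subsume them under ``letting $r\to 0$''.
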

We split the proof of Theorem \ref{Tintro1} in two subsections,  concerning with the sample stabilizability  and the Euler stabilizability, respectively.
 
 \vsm
Preliminarily,  let us observe that   for any $(x, p)\in(\R^n\setminus\C)\times\R^n$ the infimum in the definition of the Hamiltonian  $ H$ can be taken over a compact subset  of $U$, in view of the following result.
\begin{Proposition}\label{3.3} 
Assume {\rm (H0)} and   let $W$ be a $\p$-MRF with $\p\ge0$.  Then  there exists a continuous  function $N: (0,+\infty)\to (0,+\infty)$ such that,  setting for any  $(x, p)\in(\R^n\setminus\C)\times\R^n$, 
 $$
 H_{N(r)}(x,\p,p):=\min_{u\in U\cap B(0,N(r))}\Big\{\langle p, f(x,u)\rangle+\p\, l(x,u) \Big\} \quad \forall r>0,
 $$
one has
\bel{c2'}
 H_{ N(W(x))} (x,p_0,D^*W(x))< -\gamma(W(x)) \qquad \forall x\in \R^n\setminus\C.
\eeq
 \end{Proposition}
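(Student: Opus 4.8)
The plan is to exploit the decrease condition \eqref{MRH} together with the boundedness hypothesis \eqref{boundres} to show that, at each point $x$, the infimum defining $H(x,\p,p)$ is essentially attained (or approximated to within a controlled error) over controls $u$ lying in a ball whose radius depends only on $W(x)$. First I would fix $x\in\R^n\setminus\C$ and $p\in D^*W(x)$. By \eqref{MRH} there is a minimizing sequence $u_k\in U$ with $\langle p, f(x,u_k)\rangle+\p\, l(x,u_k)\to H(x,\p,p)\le-\gamma(W(x))$. In particular, for $k$ large, $\langle p, f(x,u_k)\rangle+\p\, l(x,u_k)\le -\tfrac12\gamma(W(x))<0$. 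Since $\p\, l\ge 0$, this forces $\langle p, f(x,u_k)\rangle\le -\tfrac12\gamma(W(x))$, hence $|p|\,|f(x,u_k)|\ge \tfrac12\gamma(W(x))$, which gives a lower bound $|f(x,u_k)|\ge \gamma(W(x))/(2|p|)$ — but this is a lower, not upper, bound, so the radius control must come from elsewhere: it must come from an upper bound on the whole expression. Here the key observation is that any control $u$ for which $\langle p, f(x,u)\rangle+\p\, l(x,u)$ is already negative (say $<-\tfrac12\gamma(W(x))$) is as good as we need, and such controls can be found among those with $|u|$ bounded in terms of $W(x)$.

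The heart of the argument is therefore a compactness/continuity packaging. Using that $W$ is proper and positive definite on $\R^n\setminus\C$, for each $c>0$ the sublevel set $\{x: W(x)\le c\}$ is a compact subset of $\R^n\setminus\C$; choose $R=R(c)>0$ with this set contained in $B_R(\C)\setminus\C$. On such a set, \eqref{boundres} bounds $|f|$ and $l$ by $M(R)$, and by Lemma \ref{Lscv} (applied on a slightly larger compact set) the limiting gradients $D^*W(x)$ are bounded by some $L=L(c)$. Next, for each $x$ with $W(x)\le c$, pick $u_x\in U$ realizing $\langle p_x, f(x,u_x)\rangle+\p\, l(x,u_x)<-\gamma(W(x))$ for some particular $p_x\in D^*W(x)$; by uniform continuity of $f$ and $l$ on compact sets and upper semicontinuity of $D^*W$, the same control $u_x$ works on a whole neighborhood of $x$, with $\gamma(W(\cdot))$ replaced by a slightly smaller positive quantity. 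A finite subcover of $\{W\le c\}$ then yields a single radius $\rho(c)$ such that, for every $x$ with $W(x)\le c$ and every $p\in D^*W(x)$, there is $u\in U\cap B(0,\rho(c))$ with $\langle p, f(x,u)\rangle+\p\, l(x,u)<-\gamma(W(x))$, which is exactly the strict inequality in \eqref{c2'} with $N(r)$ built to dominate $\rho(\cdot)$ on an exhausting sequence of values of $W(x)$.

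Finally I would arrange that $N$ is continuous and $(0,+\infty)$-valued: define $N$ first on a discrete exhausting sequence $c_1<c_2<\cdots\to+\infty$ by $N(c_j)\ge \max\{\rho(c_j), N(c_{j-1})\}$, then interpolate monotonically and continuously in between; since for $r=W(x)$ one has $W(x)\le c_j$ for the first $c_j\ge W(x)$ and $N(c_j)\ge\rho(c_j)\ge\rho(W(x))$ by monotonicity of $\rho$ (which we may assume, replacing $\rho$ by its running maximum), the required control lies in $U\cap B(0,N(W(x)))$.

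I expect the main obstacle to be the localization step: turning the pointwise selection $x\mapsto u_x$ into a locally uniform one, because $D^*W(x)$ is only upper semicontinuous (not continuous) and may be non-convex, so one must be careful that the inequality $\langle p, f(x,u_x)\rangle+\p\, l(x,u_x)<-\gamma(W(x))$ persists for \emph{all} nearby limiting gradients $p$, not merely the chosen $p_x$. This is handled by noting that upper semicontinuity of $D^*W$ plus uniform continuity of $f$ on compacts makes $p\mapsto \langle p, f(x,u_x)\rangle$ uniformly close across all $p$ in $D^*W$ near $x$, while $\gamma(W(\cdot))$ stays bounded below near $x$; shrinking the neighborhood restores the strict inequality uniformly. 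Once this is in place, the finite-cover and interpolation steps are routine.
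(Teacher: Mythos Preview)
There is a genuine gap in your compactness step. You assert that, for each $c>0$, the sublevel set $\{x: W(x)\le c\}$ is a compact subset of $\R^n\setminus\C$. This is false. Since $W$ is positive definite on $\R^n\setminus\C$, it vanishes identically on $\partial\C$, so $\partial\C\subset W^{-1}([0,c])$ for every $c>0$. Properness of $W$ makes $W^{-1}([0,c])$ compact in $\overline{\R^n\setminus\C}$, but it is \emph{never} contained in the open set $\R^n\setminus\C$. Consequently, you cannot apply Lemma \ref{Lscv} on this set to bound $D^*W$ (the lemma requires a compact subset of the open domain of local semiconcavity), and you cannot extract a finite subcover of the open cover by neighborhoods $\{U_x\}$, because the set you are covering is not compact inside $\R^n\setminus\C$. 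In fact, as $x\to\partial\C$ both $|D^*W(x)|$ and the control $u_x$ you select may be unbounded, so no finite radius $\rho(c)$ need exist for the full sublevel set.

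The remedy is to run your argument on level \emph{strips} $W^{-1}([a,c])$ with $a>0$, which \emph{are} compact subsets of $\R^n\setminus\C$; this yields a finite radius $\rho(a,c)$ on each strip. This is exactly what the paper does: it invokes \cite[Prop.~3.3]{LMR16} to obtain a \emph{decreasing} continuous $N$ on $(0,\sigma]$ (so $N(r)$ is allowed to blow up as $r\to 0^+$), and then extends $N$ as an increasing function on $[\sigma,+\infty)$ by the strip argument. Your construction, by contrast, defines $N$ only on a sequence $c_1<c_2<\cdots\to+\infty$ and forces $N$ to be monotone increasing, which both leaves $(0,c_1)$ unaddressed and cannot accommodate the possible growth of the needed control radius near the target. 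Once you switch to strips and allow $N$ to be merely continuous (decreasing near $0$, increasing at infinity), your localization and patching ideas go through essentially as in the paper.
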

 \begin{proof} 
Fix $\sigma>0$.  By \cite[Prop. 3.3]{LMR16}  we  derive that  there exists a decreasing, continuous function $N: (0,\sigma] \to (0,+\infty)$ such that,  setting
\bel{hL}
 H_{  N(r)}(x,p_0,p):=\min_{u\in U\cap B(0,N(r))}\Big\{\langle p, f(x,u)\rangle+\p\,l(x,u) \Big\}  
\eeq
for all $r\in(0,\sigma]$,  it follows that
\bel{c2'}
 H_{ N(W(x))} (x,\p,D^*W(x))< -\gamma(W(x)) 
\eeq
for every $x\in W^{-1}((0,\sigma])$.  It only remains to show that there exists a   continuous  map $N:[\sigma,+\infty)\to (0,+\infty)$  such that 
extending \eqref{hL}  to $r\in[\sigma,+\infty)$ one gets \eqref{c2'} for every $x\in W^{-1}([\sigma,+\infty))$.
Arguing as in the proof of  \cite[Prop. 3.3]{LMR16}, one can obtain that for any $r>\sigma$  there is some $N(r)\ge N(\sigma)$ such that 
$$
 H_{  N(r)} (x,p_0,p)< -\gamma(W(x)) \qquad\forall x\in W^{-1}([\sigma,r]) \ \text{ and } \  p\in D^*W(x).
 $$
 Moreover, for any $r_2>r_1\ge\sigma$, one clearly has $N(r_2)\ge N(r_1)$ and, enlarging $N$ if necessary, one can assume that $r\mapsto N(r)$ is increasing and  continuous on $[\sigma,+\infty)$.  Therefore for any $x\in W^{-1}([\sigma,+\infty))$ the thesis 
\eqref{c2'} follows  from \eqref{hL} as soon as $r=W(x)$.  \end{proof}

As an immediate consequence  of Proposition \ref{3.3}, the existence of a $\p$-MRF $W$  guarantees the existence of a  feedback $K$ with the following properties.
 \begin{Proposition}\label{FDBK}  Let $W$ be a $\p$-MRF with $\p\ge0$   and fix a selection $p(x)\in D^*W(x)$ for any  $x\in \R^n\setminus\C$. Given a function  $N$ as in   Proposition  \ref{3.3}, then any map  $K:\R^n\setminus\C\to U$ such that
 $$
 K(x)\in \underset{u\in U\cap B(0,N(W(x))}{\arg\min}\Big\{\langle p(x), f(x,u)\rangle+\p\,l(x,u) \Big\},
 $$ 
verifies
\bel{feed1}
\langle p(x), f(x,K(x))\rangle+\p\, l(x,K(x)) 
<-\gamma(W(x)) \quad \forall x\in \R^n\setminus\C.
\eeq
 \end{Proposition}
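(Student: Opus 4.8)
The plan is to read the statement off directly from Proposition \ref{3.3}. First I would check that the feedback $K$ is well defined, i.e. that the set
$$
\underset{u\in U\cap B(0,N(W(x)))}{\arg\min}\Big\{\langle p(x), f(x,u)\rangle+\p\,l(x,u)\Big\}
$$
is nonempty for every $x\in\R^n\setminus\C$. Indeed, by (H0) the map $u\mapsto\langle p(x),f(x,u)\rangle+\p\,l(x,u)$ is continuous on $U$, and $U\cap B(0,N(W(x)))$ is a nonempty compact set: it is compact because $U$ is closed and the ball is bounded, and it is nonempty because the very definition of $H_{N(r)}$ in Proposition \ref{3.3} (as a minimum over $U\cap B(0,N(r))$) presupposes that this intersection is nonempty. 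Hence the minimum is attained and the argmin is nonempty, so a selection $K(x)$ exists. Moreover $|K(x)|\le N(W(x))$ for all $x$, and since $W$ and $N$ are continuous, this bound is locally bounded; thus $K$ is an admissible feedback in the sense of Section \ref{Sdef}.

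Next I would simply evaluate. By the choice of $K(x)$ in the argmin,
$$
\langle p(x), f(x,K(x))\rangle+\p\,l(x,K(x))=\min_{u\in U\cap B(0,N(W(x)))}\Big\{\langle p(x), f(x,u)\rangle+\p\,l(x,u)\Big\}=H_{N(W(x))}(x,\p,p(x)).
$$
Since the selection satisfies $p(x)\in D^*W(x)$, and since the decrease estimate of Proposition \ref{3.3}, written $H_{N(W(x))}(x,\p,D^*W(x))<-\gamma(W(x))$, means precisely (cf. the footnote to Definition \ref{defMRF}) that $H_{N(W(x))}(x,\p,p)<-\gamma(W(x))$ holds for \emph{every} reachable gradient $p\in D^*W(x)$, I may apply it to $p=p(x)$. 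This immediately yields \eqref{feed1}.

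There is essentially no obstacle here: the content of Proposition \ref{FDBK} is already contained in Proposition \ref{3.3}, the only points deserving a word being the nonemptiness of the argmin — hence the actual existence of $K$ — and the local boundedness of $K$, both of which follow at once from (H0) and the continuity of $N$ and $W$. The one feature worth emphasizing is that the strict inequality ``$<$'' (in place of the ``$\le$'' in Definition \ref{defMRF}) is exactly what Proposition \ref{3.3} was engineered to deliver, and this strictness is what leaves room for the perturbation arguments in the subsequent passage to sampling and Euler solutions.
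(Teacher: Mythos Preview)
Your proposal is correct and matches the paper's approach: the paper presents this proposition as ``an immediate consequence of Proposition~\ref{3.3}'' with no separate proof, and your argument spells out precisely that immediate deduction. Your extra remarks on the nonemptiness of the argmin and the local boundedness of $K$ are correct and useful, but go slightly beyond what the paper itself writes.
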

 We call any map   $K$ as   above, a {\em $W$-feedback} for the control system \eqref{Egen}.
When the dependence of $K$ on $W$ is clear, we   simply call $K$ a feedback. 
 
\subsection{Proof of the sample stabilizability  with $(p_0,W)$-regulated cost}\label{Subsample}
The proof relies on  Propositions \ref{3.5}, \ref{pinvariant} and on Lemma \ref{LW=d} below.  
 \begin{Proposition}\label{3.5} {\rm \cite[Prop. 3.5]{LMR16}} Assume {\rm (H0)}.
  Let $W$ be a $\p$-MRF with $\p\ge0$,  define $N$ accordingly to
Proposition  \ref{3.3},  and let $K$ be a $W$-feedback. Moreover, let $\varepsilon$,  
$\hat\mu$, $\sigma$  verify $\varepsilon>0$ and $0<\hat\mu< \sigma$.  Then there 
exists some $\hat\delta=\hat\delta(\hat\mu,\sigma)>0$ such that, for every  partition $\pi=(t^j)$ of 
$[0,+\infty)$ with {\em diam}$(\pi)\le\hat\delta$  and  for each $z\in\R^n\setminus\T$ satisfying $W(z)\in(\hat\mu,\sigma]$, any $\pi$-sampling trajectory-control pair  $(x,u)$ of 
\bel{sis1}
\dot x=  f(x,u), \qquad x(0)=z,
\eeq
associated to the feedback $K$ is defined on $[0,\hat t)$   and enjoys the following properties:
\begin{itemize}
\item[(i)]  $\hat t:=T^{\hat\mu}_x<+\infty$, where
\bel{Tr} 
T^{\hat\mu}_x:=\inf\{t\ge 0: \ \ W(x(t))\le \hat\mu\};
\eeq
 \item[(ii)] 
  for every  $t\in[0,\hat t)$ and $j\ge1$   such that  $t\in[t^{j-1}, t^j)$,
\bel{dainserire}
  W(x(t))-W(x(t^{j-1}))+\p  \int_{t^{j-1}}^t  l(x(\tau),u(\tau))\,d\tau  
\le -\frac{\gamma(W(x(t^{j-1}))) }{\varepsilon+1}(t -t^{j-1}).
\eeq
\end{itemize}
 \end{Proposition}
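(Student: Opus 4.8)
The plan is to convert the pointwise $W$-feedback inequality \eqref{feed1} into the discrete decrease estimate \eqref{dainserire} along each sampling arc, using the local semiconcavity bound of Lemma \ref{Lscv} to control the second-order term and the uniform continuity in (H0) to control the error coming from the ``frozen'' control, and then to run a continuation/induction argument over the subintervals of $\pi$ until the level $\hat\mu$ is hit. First I would fix the relevant compact sets and constants. Since $W$ is positive definite and proper on $\R^n\setminus\C$, the set $\mathcal K_0:=W^{-1}([\hat\mu,\sigma])$ is a compact subset of $\R^n\setminus\C$, and by continuity and properness of $W$ I may choose $\eta>0$ and a compact set $\mathcal K_1\subset\R^n\setminus\C$ with $B_{2\eta}(\mathcal K_0)\subseteq\mathcal K_1$ and $W^{-1}([\hat\mu/2,\sigma+1])\subseteq\mathcal K_1$. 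Then, picking $R>0$ with $\mathcal K_1\subset B_R(\C)$, one has $|f|\le M_0$ and $l\le M_0$ on $\mathcal K_1\times U$ with $M_0:=M(R)$ by \eqref{boundres}; let $\omega$ be a common modulus of continuity of $f$ and $l$ on $\mathcal K_1\times U$ (available by (H0)), and let $L,\rho$ be constants for $W$ on $\mathcal K_1$ as in Lemma \ref{Lscv}. All of $\eta$, $\mathcal K_1$, $M_0$, $\omega$, $L$, $\rho$ depend only on $\hat\mu$, $\sigma$ and on the data.

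Next I would choose $\hat\delta=\hat\delta(\hat\mu,\sigma)>0$ so small that (a) $2M_0\hat\delta\le\eta$ and (b) $(L+\p)\,\omega(M_0\hat\delta)+\rho M_0^2\hat\delta\le\frac{\varepsilon}{\varepsilon+1}\,\gamma(\hat\mu)$; this is possible since the left-hand side of (b) tends to $0$ as $\hat\delta\to0^+$ and $\gamma(\hat\mu)>0$. Now fix a partition $\pi=(t^j)$ with $\mathrm{diam}(\pi)\le\hat\delta$, a point $z$ with $W(z)\in(\hat\mu,\sigma]$ (so $z\in\mathcal K_0$), and let $p(\cdot)$ be the selection of $D^*W$ used to build the $W$-feedback $K$. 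Assume inductively that $x$ is defined on $[0,t^{j-1}]$ with $x([0,t^{j-1}])\subset\mathcal K_1$ and, whenever $t^{j-1}<T^{\hat\mu}_x$, that $x(t^{j-1})\in\mathcal K_0$. On $[t^{j-1},t^j]$ one solves $\dot x=f(x,u_j)$ with the constant control $u_j:=K(x(t^{j-1}))$; a standard continuation argument using $|f|\le M_0$ on $\mathcal K_1$ and (a) shows the arc is defined on all of $[t^{j-1},t^j]$, stays in $B_\eta(\mathcal K_0)\subset\mathcal K_1$, and obeys $|x(t)-x(t^{j-1})|\le M_0(t-t^{j-1})$, so that also the segment $[x(t^{j-1}),x(t)]$ lies in $\mathcal K_1$. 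Writing $x(t)-x(t^{j-1})=\int_{t^{j-1}}^{t}f(x(s),u_j)\,ds$, applying Lemma \ref{Lscv} at $x(t^{j-1})$ (which yields both $|p(x(t^{j-1}))|\le L$ and the quadratic estimate), bounding $|f(x(s),u_j)-f(x(t^{j-1}),u_j)|$ and $|l(x(s),u_j)-l(x(t^{j-1}),u_j)|$ by $\omega(M_0\hat\delta)$, and adding $\p\int_{t^{j-1}}^{t}l(x(s),u_j)\,ds$, one finds that on $[t^{j-1},t^j]$ the quantity $W(x(t))-W(x(t^{j-1}))+\p\int_{t^{j-1}}^{t}l(x(s),u_j)\,ds$ is at most $(t-t^{j-1})\big(\langle p(x(t^{j-1})),f(x(t^{j-1}),u_j)\rangle+\p\,l(x(t^{j-1}),u_j)+(L+\p)\omega(M_0\hat\delta)+\rho M_0^2\hat\delta\big)$. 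By \eqref{feed1} the inner product plus $\p\,l$ here is $<-\gamma(W(x(t^{j-1})))$, and (b) together with $\gamma(W(x(t^{j-1})))\ge\gamma(\hat\mu)$ (true since $\gamma$ is increasing and $W(x(t^{j-1}))\ge\hat\mu$ as long as $t^{j-1}<T^{\hat\mu}_x$) absorbs the residual error, giving exactly \eqref{dainserire} on $[t^{j-1},t^j]$. Since $l\ge0$, this makes $W$ nonincreasing along the arc, so $W(x(t^j))\le W(x(t^{j-1}))\le W(z)\le\sigma$, and if moreover $t^j<T^{\hat\mu}_x$ then $W(x(t^j))>\hat\mu$, i.e.\ $x(t^j)\in\mathcal K_0$; the induction therefore continues.

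It remains to prove finiteness of $\hat t$. If $T^{\hat\mu}_x=+\infty$, the induction runs forever, $x$ is defined on $[0,+\infty)$, and summing \eqref{dainserire} over the partition gives $W(x(t^j))\le W(z)-\frac{\gamma(\hat\mu)}{\varepsilon+1}\,t^j\to-\infty$, contradicting $W\ge0$; hence $\hat t:=T^{\hat\mu}_x<+\infty$. Choosing $j_0$ with $t^{j_0-1}<\hat t\le t^{j_0}$, the induction shows $x$ is defined on $[0,t^{j_0}]$, which contains $[0,\hat t]$, and by continuity of $t\mapsto W(x(t))$ and the definition of $\hat t$ one gets $W(x(\hat t))=\hat\mu$ while $W(x(t))>\hat\mu$ on $[0,\hat t)$: this is (i). Finally (ii) for a given $t\in[0,\hat t)$ is precisely the instance of \eqref{dainserire} already obtained on the subinterval $[t^{j-1},t^j]\ni t$ (note $t^{j-1}<\hat t$, so the step above does apply).

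The part I expect to be the main obstacle is the continuation argument: one has to guarantee that no sampling arc can leave the fixed compact set $\mathcal K_1$, approach $\C$, or blow up before time $T^{\hat\mu}_x$, so that the semiconcavity estimate of Lemma \ref{Lscv}, the moduli of continuity, and the bounds \eqref{boundres} are all available simultaneously on the set where the trajectory actually lives; this must be set up so as to avoid circularity (the arc stays in $\mathcal K_1$ because $\hat\delta$ is small, and $\hat\delta$ is chosen small precisely in terms of $L,\rho,M_0,\omega,\eta$, which are read off from $\mathcal K_1$). A second, more bookkeeping, point is to check that all these constants — hence $\hat\delta$ — depend only on $\hat\mu$ and $\sigma$ (the constant $\varepsilon$ being fixed once and for all), and in particular not on $z$ or on the partition $\pi$ other than through $\mathrm{diam}(\pi)\le\hat\delta$.
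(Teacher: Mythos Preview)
Your proposal is correct and follows exactly the approach one would expect (and, presumably, the one in \cite{LMR16}): freeze the control on each sampling subinterval, use the semiconcavity inequality of Lemma~\ref{Lscv} at the base point, control the drift error via the uniform modulus from (H0), and absorb the second--order and modulus terms into the decrease rate by taking $\hat\delta$ small. The induction/continuation scheme you describe is the standard one, and your bookkeeping on constants (and on the dependence of $\hat\delta$ only on $\hat\mu$, $\sigma$, with $\varepsilon$ fixed) is right.

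Note that the present paper does not give its own proof of this proposition: it is quoted verbatim from \cite[Prop.~3.5]{LMR16} and used as a black box. So there is no ``paper's proof'' to compare with here; your argument simply supplies the missing details along the natural lines.

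Two minor remarks. First, condition~(a), $2M_0\hat\delta\le\eta$, is slightly stronger than needed (half of it suffices for the continuation and for the segment $[x(t^{j-1}),x(t)]$ to stay in $\mathcal K_1$), but this is harmless. Second, when you invoke the modulus $\omega$ you are using the \emph{joint} uniform continuity of $f,l$ on $\mathcal K_1\times U$ granted by (H0); it may be worth stating this explicitly, since it is what makes the bound uniform in $u_j$.
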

 Proposition \ref{3.5}  describes  the behavior of any sampling trajectory-control pair $(x,u)$  with  sampling time not greater than $\hat\delta$ just until its  first exit-time $\hat t$  from the set $\{x\in \overline{\R^n\setminus \T}: \ \  W(x)>\hat\mu\}$. In \cite{LMR16} this was enough  to derive global asymptotic controllability. Global asymptotic stabilizability, instead,   requires  also that,  loosely speaking,  any  $x$  is defined in $[0,+\infty)$   and stays in the sublevel set $\{x\in \overline{\R^n\setminus \T}: \ \  W(x)\leq\hat\mu\}$ for every $t\ge\bar t$, for some $\bar t=\bar t(\hat\mu,\sigma)$.  This is the content of the next proposition, which  can be seen  as an extension of \cite[Lemma IV.2]{CLSS} to the setting considered here. 

\begin{Proposition}\label{pinvariant} Assume {\rm (H0)} and let $W$ be a $\p$-MRF with $\p\ge0$. 
Using the same notation of Proposition \ref{3.5},  set
\bel{conddelta}
\bar\delta= \bar\delta(\hat\mu,\sigma):=\min\left\{\hat\delta\left(\frac{\hat\mu}{4},2\sigma\right), \frac{\hat \mu}{4L\,m}\right\},
\eeq 
where $L$ is the Lipschitz constant  of $W$ in $W^{-1}([\hat \mu/4,2\sigma])$ and 
\bel{m1}
m:=\sup_{W^{-1}((0,2\sigma])\times U} |(f, l)|.
\eeq
Then for every  partition $\pi=(t^j)$ of 
$[0,+\infty)$ with {\em diam}$(\pi)\le\bar\delta$  and  for each $z\in\R^n\setminus\T$ satisfying $W(z)\in(\hat\mu,\sigma]$, any $\pi$-sampling trajectory $x$ of \eqref{sis1} is defined in $[0,+\infty)$ \footnote{When $T_x<+\infty$, we always mean  that $x:[0,T_x)\to\R^n\setminus\C$ is extended to $[0,+\infty)$ as described in Definition \ref{adm}. 
} and   verifies  
\bel{invariance}
x(t)\in W^{-1}([0,\hat\mu]) \quad \forall t\geq \bar t,
\eeq
where $\bar t:=T_x^{\hat\mu/4}<+\infty$.
\end{Proposition}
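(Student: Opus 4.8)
The plan is to bootstrap from Proposition~\ref{3.5}, used with the pair $(\hat\mu/4,2\sigma)$ in place of $(\hat\mu,\sigma)$, and then to run a first--exit--time argument, iterated over the sampling intervals, in order to trap $x$ inside the sublevel set $W^{-1}([0,\hat\mu])$. First I would apply Proposition~\ref{3.5}: since $W(z)\in(\hat\mu,\sigma]\subset(\hat\mu/4,2\sigma]$ and $\mathrm{diam}(\pi)\le\bar\delta\le\hat\delta(\hat\mu/4,2\sigma)$, it gives that $x$ is defined on $[0,\bar t)$ with $\bar t=T_x^{\hat\mu/4}<+\infty$ and that the decrease estimate \eqref{dainserire} (with $\hat\mu/4$, $2\sigma$) holds along $x$ there. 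Summing \eqref{dainserire} over consecutive steps and using $l\ge0$, one gets $W(x(t))\le W(z)\le\sigma$ on $[0,\bar t)$, so $x$ remains in the compact set $W^{-1}([\hat\mu/4,\sigma])\subset\R^n\setminus\C$; since $f$ is bounded there by {\rm (H0)}, $x$ can neither blow up nor reach $\partial\C$ before $\bar t$, hence it extends continuously to $[0,\bar t]$ and, by continuity and the definition of $\bar t$, $W(x(\bar t))=\hat\mu/4$.

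For the behaviour after $\bar t$ I would record two elementary facts. \emph{(a) Restartability of the decrease:} at any sampling node $t^{j-1}$ with $W(x(t^{j-1}))\in(\hat\mu/4,2\sigma]$, the restriction of $(x,u)$ to $[t^{j-1},+\infty)$ is itself a sampling trajectory--control pair from $x(t^{j-1})$ for the shifted partition, of diameter still $\le\bar\delta$, so Proposition~\ref{3.5} applied to it shows that, as long as $W(x(\cdot))>\hat\mu/4$ on $[t^{j-1},t]$,
$$W(x(t))\le W(x(t^{j-1}))-\frac{\gamma\big(W(x(t^{j-1}))\big)}{\varepsilon+1}\,(t-t^{j-1})\le W(x(t^{j-1})).$$
\emph{(b) Bounded growth per step:} on any step $[t^{j-1},t^{j}]$ along which $W(x(\cdot))$ stays in $W^{-1}([\hat\mu/4,2\sigma])$, the Lipschitz bound for $W$ there, together with $|\dot x|\le m$ (see \eqref{m1}), yields $|W(x(t))-W(x(t^{j-1}))|\le L\,m\,(t-t^{j-1})\le L\,m\,\bar\delta\le\hat\mu/4$.

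With (a)--(b) available, I would let $j_*$ be the index with $\bar t\in[t^{j_*-1},t^{j_*})$ and prove by induction on $j\ge j_*$ that $W(x(t^{j}))\le\hat\mu/2$ and $W(x(t))\le\tfrac34\hat\mu<\hat\mu$ for all $t\in[t^{j},t^{j+1}]$. The base interval $[\bar t,t^{j_*}]$ starts from $W(x(\bar t))=\hat\mu/4$, and a first--exit argument combining (a) and (b) keeps $W(x(\cdot))\le\hat\mu/2$ there. In the inductive step, if $W(x(t^{j}))\le\hat\mu/4$ one repeats that argument; if $\hat\mu/4<W(x(t^{j}))\le\hat\mu/2$, fact (a) drives $W(x(\cdot))$ downward until it possibly reaches $\hat\mu/4$, after which fact (b) keeps it $\le\hat\mu/2$ --- in either case $W(x(t^{j+1}))\le\hat\mu/2$ and $W(x(\cdot))\le\tfrac34\hat\mu$ on that step. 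Combined with the bound $W(x(t))\le\sigma$ for $t<\bar t$ and $W(x(\bar t))=\hat\mu/4$ established at the outset, this gives $W(x(t))\le\hat\mu$ for every $t\ge\bar t$, that is \eqref{invariance}; and since from then on $W(x(t))\le 2\sigma$ keeps $x$ in the compact set $W^{-1}([0,2\sigma])\subset\overline{\R^n\setminus\C}$, the trajectory $x$ is defined on all of $[0,+\infty)$ (extended as in Definition~\ref{adm} should it meet $\partial\C$), with $\bar t=T_x^{\hat\mu/4}<+\infty$.

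The main obstacle is precisely the patching in the last step: the sampling decrease estimate of Proposition~\ref{3.5} is available only while $W(x(\cdot))>\hat\mu/4$, whereas the Lipschitz growth bound (b) requires the trajectory to lie in $W^{-1}([\hat\mu/4,2\sigma])$; since $W(x(\cdot))$ may drop below $\hat\mu/4$ inside a single step and rise again, one must track, on each interval, the last instant at which $W(x(\cdot))=\hat\mu/4$ and splice the two estimates beyond it. The factor $1/4$ in the threshold $\hat\mu/4$ --- and, correspondingly, the two terms defining $\bar\delta$ in \eqref{conddelta}, namely $\hat\delta(\hat\mu/4,2\sigma)$ and $\hat\mu/(4Lm)$ --- are exactly what guarantee that a step can add at most $\hat\mu/4$ above a node value that is itself $\le\hat\mu/2$, so that $W(x(\cdot))$ never reaches $\hat\mu$, while the decrease condition forbids any cumulative drift.
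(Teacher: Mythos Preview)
Your proposal is correct and follows essentially the same route as the paper: first invoke Proposition~\ref{3.5} with the pair $(\hat\mu/4,2\sigma)$ to reach $\bar t=T_x^{\hat\mu/4}<+\infty$, then run an induction over the sampling intervals, combining the restartable decrease estimate (your fact~(a)) with the per--step Lipschitz growth bound (your fact~(b)). The only noticeable difference is in the bookkeeping of the inductive step. The paper keeps the simpler hypothesis $x([0,t^{j-1}])\subseteq W^{-1}([0,\hat\mu])$ and splits into the two cases $W(x(t^{j-1}))\ge\hat\mu/2$ and $W(x(t^{j-1}))<\hat\mu/2$; in the second case it argues by contradiction, picking times $\underline t^j<\bar t^j$ in the step with $W(x(\underline t^j))=\hat\mu/2$, $W(x(\bar t^j))=\hat\mu$, so that on $[\underline t^j,\bar t^j]$ the trajectory sits in $W^{-1}([\hat\mu/2,\hat\mu])\subset W^{-1}([\hat\mu/4,2\sigma])$ and the Lipschitz bound immediately yields $\hat\mu/2\le Lm\bar\delta\le\hat\mu/4$, a contradiction. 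This device neatly sidesteps the very obstacle you flag --- that the Lipschitz constant $L$ is only available on $W^{-1}([\hat\mu/4,2\sigma])$ --- because the sub\-interval on which the bound is applied is chosen so that $W\ge\hat\mu/2$ there. Your version, tracking the last crossing of the level $\hat\mu/4$ and splicing (a) and (b) around it, is a workable alternative but a bit heavier; the paper's contradiction argument buys a cleaner one--line step and avoids any discussion of what happens while $W(x(\cdot))<\hat\mu/4$. Both arguments also need (and both provide) the observation that the trajectory cannot blow up while trapped in the compact set $W^{-1}([0,\hat\mu])$, so any failure to extend is a hit of $\partial\C$, handled by the extension of Definition~\ref{adm}.
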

\begin{proof}
 Fix a partition $\pi=(t^j)$ of 
$[0,+\infty)$ of diameter not greater than $\bar\delta$ and an initial datum $z\in W^{-1}((\hat \mu, \sigma])$. By Proposition \ref{3.5}  with $\hat\mu/4$ in place of $\hat\mu$,   any $\pi$-sampling solution $x$  is defined at least up to $\bar t:=T_x^{\hat\mu/4}<+\infty$ and  
$W(x([0,\bar t])\subset[\hat \mu/4, W(z)]$,  $W(x(\bar t))=\hat \mu/4$.  Moreover, if $\bar  n:=\max\{j\in\N: \  t^j\leq \bar t\}$, then  we have
\bel{eqbase}
x(t)\in W^{-1}([\hat \mu/4,W(t^{\bar n-1})])\subseteq W^{-1}([\hat \mu/4, 3\hat\mu/4]) \quad \forall t\in [t^{\bar n-1},t^{\bar n}].
\eeq  
where, to deal with  the case $\bar n=0$, we set $t^{-1}:=t^0=0$. The last inclusion follows by the definition of $\bar\delta$, which implies 
$$
W(x(t^{\bar n}))-W(x(\bar t))\le L|x(t^{\bar n})-x(\bar t)|\le Lm\bar\delta\le\frac{\hat \mu}{4}, 
$$
so that $W(x(t^{\bar n}))\le \hat \mu/2$ and, arguing similarly,   $W(x(t^{\bar n-1}))\le 3\hat \mu/4$.

\noindent We use \eqref{eqbase} as base to inductively prove that any  $\pi$-sampling solution  $x$ of \eqref{sis1} either is defined   on $[0,+\infty)$ and verifies \eqref{invariance} in the stronger form
$$
x(t)\in W^{-1}((0,\hat\mu]) \quad \forall t\geq \bar t,
$$
or $x$ has finite blow-up time  coinciding with the first time   $T_x$  such that $\lim_{t\to T_x^-}\d(x(t))=0$:  in this case, since $|\dot x|$ is bounded by $m$,   $x$  can be continuously extended to  $[0,+\infty)$  and this extension verifies \eqref{invariance}.

Fix $j\ge\bar n$ and 
assume by induction that an arbitrary
$\pi$-sampling trajectory $x$, eventually extended accordingly to Definition  \ref{adm},  is defined up to time $t^{j-1}$ and verifies $x([0,t^{j-1}])\subseteq W^{-1}([0,\hat\mu])$.  We have to show that $x$ is defined on  $[t^{j-1},t^{j}]$ and verifies
\bel{induction}
  x(t)\in W^{-1}([0,\hat\mu]) \quad \forall t\in [t^{j-1},t^{j}].
\eeq
If $W(x(t^{j-1}))=0$, $x$ is constant on  $[t^{j-1},t^j]$ and   \eqref{induction} is obviously satisfied. 
When $0<W(x(t^{j-1}))\le\hat\mu$,  we distinguish the following situations:
 
{\sc Case 1.}  $W(x(t^{j-1}))\ge \hat \mu/2$. Then by Proposition \ref{3.5} (choosing in particular $z=x(t^{j-1})$ and the partition $\pi_j:=(t^{k+j-1}-t^{j-1})_k$)  we deduce that any $\pi$-sampling trajectory  with value  $W(x(t^{j-1}))\ge \hat \mu/2$ is defined on the whole interval $[t^{j-1},t^j]$ and verifies $0\le W(x(t))-W(x(t^j))\le Lm\bar\delta\le\hat\mu/4$ for all $t\in [t^{j-1},t^j]$, so that  $x([t^{j-1},t^{j}])\subset W^{-1}([\hat\mu/4,\hat\mu])$ and this implies  \eqref{induction}.
 
{\sc Case  2.}   $W(x(t^{j-1}))<\hat\mu/2$.  Any $\pi$-sampling solution $x$ of \eqref{sis1} with this property can be defined on a maximal interval $[t^{j-1}, \tilde t)$. Assume first that  $\tilde t> t^j$, so that $x$ is defined for all $t\in[t^{j-1},t^j]$ and suppose by contradiction  
$$
x([t^{j-1},t^j])\nsubseteq W^{-1}([0,\hat\mu]).
$$
Then there exist $t^{i-1}<\underbar t^j< \bar t^j\leq t^j$ such that 
$$W(x(\underbar t^j))= \hat \mu/2,\quad  W(x(\bar t^j))=\hat\mu, \quad x([\underbar t^j,\bar t^j])\subseteq W^{-1}([\hat\mu/2,\hat\mu]).$$
This yields the required contradiction, since we have
$$\hat\mu/2=W(x(\bar t^j))-W(x(\underbar t^j))\leq  Lm\bar\delta \le\hat\mu/4.
$$
Therefore $x$ verifies  \eqref{induction}. 

\noindent Let us now assume $\tilde t\le t^j$. By standard properties of the ODEs,  the blow-up time $\tilde t$  verifies either $\lim_{t\to \tilde t^-}|x(t)|=+\infty$ or  $\tilde t=T_x$. Notice that if we had
\bel{liv}
x([t^{j-1},\tilde t))\nsubseteq W^{-1}([0,\hat\mu]),
\eeq
 we could find $t^{i-1}<\underbar t^j< \bar t^j<\tilde t$ and obtain a contradiction arguing as above.
Hence  $x([t^{j-1},\tilde t))\subseteq W^{-1}([0,\hat\mu])$ and $\tilde t= T_x$ necessarily, since the set $W^{-1}([0,\hat\mu])$ is compact.   By the boundedness of $f$ on $W^{-1}((0,\hat\mu])\times U$ this implies that 
 $\exists\lim_{t\to \tilde t}x(t)=\bar z\in\partial\C$ and the extension of $x$ to $[t^{j-1},t^j]$ given by  $x(t)=\bar z$ for all $t\in[\tilde t,t^j]$  verifies  \eqref{induction}. The proof is thus concluded. \end{proof}

\vsm

Finally, let us  relate the level sets of a $\p$-MRF $W$ with the ones of the distance function $\d$ using the following general result.
 \begin{Lemma}\label{LW=d}
 Let $W$, $W_1 :\ol{\R^n\setminus \T}\to[0,+\infty)$ be   continuous functions, and let us assume that $W$ and $W_1$ are  positive definite and  proper on $\R^n\setminus\T$.  Then the functions  $\bar g$, $\underline{g}:(0,+\infty)\to (0,+\infty)$ given by
$$
\begin{array}{l}
\underline{g}(r)=\underline{g}_{\,W,W_1}(r):=\sup\left\{\alpha>0: \ \ \{\tilde z: \ W(\tilde z)\le\alpha\}\subseteq  \{\tilde z: \  W_1(\tilde z) < r\}\right\}, \\ [1.5ex] 
 \bar g(r)=\bar g_{\,W,W_1}(r):=\inf\left\{\alpha>0: \ \ \{\tilde z: \ W(\tilde z)\le\alpha\}\supseteq  \{\tilde z: \  W_1( \tilde z)\le r\}\right\}, 
\end{array}
$$
are well-defined, increasing  and there exist  the limits
\bel{Llim}
\lim_{r\to 0^+}\bar g(r)=\lim_{r\to 0^+}\underline{g}(r)=0, \qquad \lim_{r\to +\infty}\bar g(r)=\lim_{r\to +\infty}\underline{g}(r)=+\infty.
\eeq
Moreover, one has
\bel{Ldis}
\underline{g}(W_1(x))\le W (x)\le \bar g(W_1(x)) \quad \forall   x\in\R^n\setminus\C.
\eeq
\end{Lemma}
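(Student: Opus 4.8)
The plan is to exploit the two structural properties shared by $W$ and $W_1$ — positive definiteness and properness on $\R^n\setminus\T$ — to show that sublevel sets of each function form a monotone exhaustion of $\overline{\R^n\setminus\T}$ by compact sets, and that these two exhaustions are \emph{cofinal} with one another. First I would record the elementary observations: for a continuous, positive definite, proper function $F$ on $\R^n\setminus\T$, the sublevel set $\{F\le\alpha\}$ is compact for every $\alpha\ge0$ (properness), it contains $\partial\C$ for every $\alpha\ge0$ (positive definiteness), it is nested increasingly in $\alpha$, and $\bigcup_{\alpha>0}\{F\le\alpha\}=\overline{\R^n\setminus\T}$ while $\bigcap_{\alpha>0}\{F\le\alpha\}=\{F=0\}=\partial\C$. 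Also $\{F<r\}$ is open in $\overline{\R^n\setminus\T}$ and contains $\partial\C$.

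With these in hand, I would argue \emph{well-definedness} of $\underline g$ and $\bar g$. For $\underline g(r)$: since $\{W_1<r\}$ is an open neighborhood of the compact set $\partial\C=\{W=0\}$, continuity and properness of $W$ give a small $\alpha>0$ with $\{W\le\alpha\}\subseteq\{W_1<r\}$ (otherwise one extracts a sequence $z_k\in\{W\le 1/k\}\setminus\{W_1<r\}$; by properness it lies in a fixed compact set, so a subsequence converges to some $\bar z$ with $W(\bar z)=0$, i.e. $\bar z\in\partial\C$, yet $W_1(\bar z)\ge r>0$, contradicting positive definiteness of $W_1$). Hence the defining set of $\underline g(r)$ is nonempty, and it is bounded above because $\{W_1<r\}$ is not all of $\overline{\R^n\setminus\T}$ (it is contained in the compact set $\{W_1\le r\}$, which is proper), so large $\alpha$ fail. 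For $\bar g(r)$: the set $\{W_1\le r\}$ is compact, so $W$ attains a finite maximum on it, and any $\alpha$ above that maximum works; the defining set is thus nonempty and bounded below by $0$. \emph{Monotonicity} of both functions in $r$ is immediate from the fact that $r\mapsto\{W_1<r\}$ and $r\mapsto\{W_1\le r\}$ are increasing, so the respective constraint sets on $\alpha$ only get larger (for $\underline g$, harder for smaller $r$; for $\bar g$, a larger target forces a larger $\alpha$).

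Next, the inequality chain \eqref{Ldis}. Fix $x\in\R^n\setminus\C$ and set $r:=W_1(x)>0$. For the lower bound: if $\alpha$ is admissible in the definition of $\underline g(r)$, then $\{W\le\alpha\}\subseteq\{W_1<r\}$, and since $x\notin\{W_1<r\}$ we get $x\notin\{W\le\alpha\}$, i.e. $W(x)>\alpha$; taking the supremum over such $\alpha$ yields $W(x)\ge\underline g(r)=\underline g(W_1(x))$. For the upper bound: if $\alpha$ is admissible in the definition of $\bar g(r)$, then $\{W_1\le r\}\subseteq\{W\le\alpha\}$, and since $x\in\{W_1\le r\}$ we get $W(x)\le\alpha$; taking the infimum over such $\alpha$ gives $W(x)\le\bar g(W_1(x))$. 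Finally, the limits \eqref{Llim}. From \eqref{Ldis} together with $\underline g\le\bar g$ (which follows because $\{W\le\underline g(r)-\varepsilon\}\subseteq\{W_1<r\}\subseteq\{W_1\le r\}\subseteq\{W\le\bar g(r)+\varepsilon\}$, so $\underline g(r)-\varepsilon\le\bar g(r)+\varepsilon$ unless the sublevel sets coincide trivially; alternatively apply \eqref{Ldis} directly), it suffices to control one of them from each side. As $r\to0^+$: $\bar g(r)=\max_{\{W_1\le r\}}W\to 0$ since $\{W_1\le r\}\downarrow\partial\C$ where $W\equiv0$, using compactness and continuity of $W$ (any limit point of maximizers lies in $\partial\C$); hence $\underline g(r)\to0$ as well. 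As $r\to+\infty$: the sets $\{W_1<r\}$ exhaust $\overline{\R^n\setminus\T}$, so for any $\alpha>0$ the compact set $\{W\le\alpha\}$ is eventually contained in $\{W_1<r\}$, forcing $\underline g(r)\ge\alpha$ for $r$ large; hence $\underline g(r)\to+\infty$, and a fortiori $\bar g(r)\to+\infty$.

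The main obstacle I anticipate is purely in the careful bookkeeping of which inclusions are strict versus non-strict — the asymmetry $\{W_1<r\}$ in $\underline g$ versus $\{W_1\le r\}$ in $\bar g$ is exactly what makes \eqref{Ldis} come out with the right (non-strict) inequalities, and one has to be attentive that $x$ with $W_1(x)=r$ lands \emph{outside} $\{W_1<r\}$ but \emph{inside} $\{W_1\le r\}$. Everything else is a routine compactness argument; no Lipschitz or semiconcavity hypothesis on $W$ is used, only continuity, positive definiteness, and properness, which is why the lemma is stated for a general pair $W,W_1$.
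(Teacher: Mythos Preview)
Your proposal is correct and follows essentially the same approach as the paper's proof: both arguments rest on the observation that the sublevel sets of $W$ and $W_1$ form nested, compact families shrinking to $\partial\C$ and exhausting $\overline{\R^n\setminus\T}$, from which well-definedness, monotonicity, and the limits follow directly. The only tactical difference is that you establish the sandwich inequality \eqref{Ldis} by a direct argument (checking membership of $x$ in the relevant sublevel sets), whereas the paper argues each inequality by contradiction; your route is slightly cleaner but the content is identical.
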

\begin{proof}   For every $\alpha>0$, let us introduce the sets $\cS_\alpha:= \{\tilde z: \ W(\tilde z)\le\alpha\}$,   $\cS^1_\alpha:=  \{\tilde z: \ W_1( \tilde z)\le\alpha\}$, and  $\cS^{1_<}_\alpha:=  \{\tilde z: \ W_1( \tilde z)<\alpha\}$. 
By the hypotheses on $W$ and $W_1$ it follows that $(\cS_\alpha)_{\alpha>0}$,   $(\cS^1_\alpha)_{\alpha>0}$, and  $(\cS^{1_<}_\alpha)_{\alpha>0}$ are  strictly  increasing families of nonempty, bounded sets verifying
$$
\begin{array}{l}
\ds\lim_{\alpha\to 0^+}\cS_\alpha= \lim_{\alpha\to 0^+}\cS^1_\alpha=\lim_{\alpha\to 0^+}\cS^{1_<}_\alpha=\C, \\
\ds \lim_{\alpha\to +\infty}\cS_\alpha= \lim_{\alpha\to +\infty}\cS^1_\alpha= \lim_{\alpha\to +\infty}\cS^{1_<}_\alpha=\R^n.
\end{array}
$$
Then for any $r>0$ there exist $\bar \alpha$, $\bar\alpha_1>0$ such that $\cS_\alpha\subset \cS^{1_<}_r$ for all $\alpha\le\bar\alpha$ and   $\cS_\alpha\supset \cS^1_r$ for all $\alpha\ge\bar\alpha_1$, so that  $\bar g(r)$ and $\underline{g}(r)$ turn  out to be well-defined. Moreover, $r\mapsto \bar g(r)$, $\underline{g}(r)$ are clearly increasing and verify the limits \eqref{Llim}.  

 In order to prove the inequalities in  \eqref{Ldis}, given $x\in\R^n\setminus\C$, let us set $r:=W_1(x)$, $\alpha:=W(x)$, $\underline{\alpha}:=\underline{g}(W_1(x))$, and $\bar\alpha:=\bar g(W_1(x))$. Arguing by contradiction, let us first assume that $\underline{g}(W_1(x))> W (x)$, namely
\bel{contr1}
\underline{\alpha} > \alpha. 
\eeq
By the definition of $\underline{\alpha}$, \eqref{contr1} would imply that $\cS_\alpha\subseteq \cS^{1_<}_r$. This is impossible, since $x\in\cS_\alpha$ but $x\notin \cS^{1_<}_r$, because $\alpha=W(x)\le\alpha$ while $r=W_1(x)\nless r$. Similarly, if we suppose that 
 $W (x)> \bar g(W_1(x))$, namely
 \bel{contr2}
\bar{\alpha}< \alpha, 
\eeq
by the definition of  $\bar{\alpha}$ we get that, for every $\alpha'\in(\bar \alpha,\alpha)$, one should have $\cS^{1}_r\subseteq \cS_{\alpha'}$.  But this contradicts the fact that $x\in \cS^{1}_r$, since $r=W_1(x)\le r$, while $x\notin  \cS_{\alpha'}$, being $\alpha=W(x)\nleq \alpha'$.
\end{proof}

\vsm 
We are now ready to  show that, given a $\p$-MRF $W$ with $\p>0$,   the control system \eqref{Egen} is sample stabilizable to $\C$ with $(p_0,W)$-regulated cost. For any pair $r$, $R>0$ with $r< R$, let us   set
\bel{mur}
\begin{array}{l}
\hat\mu(r):=\underline{g}_{W,\d}(r)=\sup\left\{\mu>0: \ \ \{\tilde z: \ W(\tilde z)\le\mu\}\subseteq\overset{\circ}{B}_r(\C)\right\}, \\ \, \\ 
 \sigma(R):=\bar {g}_{W,\d}(R)=\inf\left\{\sigma>0: \ \ \{\tilde z: \ W(\tilde z)\le\sigma\}\supseteq B_R(\C)\right\}. 
\end{array}
\eeq
By Lemma \ref{LW=d},  if $r<{\d(\tilde z)}\leq R$,   then $\tilde z\in W^{-1}((\hat \mu(r), \sigma(R)])$ and the values $\hat\mu(r)$,  $ \sigma(R)$ are finite  and  verify $0<\hat\mu(r)< \sigma(R)$.    Let us choose 
\bel{delta}
\delta=\delta(r,R):=\bar\delta(\hat\mu(r),\sigma(R)),
\eeq
  where $\bar\delta(\hat\mu,\sigma)$ is defined by \eqref{conddelta}. Fixed $\varepsilon>0$, for instance, $\varepsilon=1$,  by Propositions \ref{3.5}, \ref{pinvariant} it follows that for every partition $\pi=(t^j)$ of 
$[0,+\infty)$ with  $\text{diam}(\pi)\le\delta$ and for every initial state $z\in\R^n\setminus\C$ such that ${\bf d}(z)\le R$, any  $\pi$-sampling  trajectory-control pair  $(x,u)$  of   \eqref{Egen} with $x(0)=z$  has $x$ defined in $[0,+\infty)$  and verifies:
\begin{itemize}
\item[(i)] $\bar t:=T_x^{\hat\mu(r)/4}<+\infty$;
\item[(ii)] for every  $t\in[0,\bar t)$ and $j\ge1$   such that  $t\in[t^{j-1}, t^j)$,
\bel{estW}
  W(x(t))-W(x(t^{j-1}))+\p  \int_{t^{j-1}}^t l(x(\tau),u(\tau))\,d\tau  
\le -\frac{\gamma(W(x(t^{j-1}))) }{2}(t -t^{j-1});
\eeq
\item[(iii)] for every  $t\ge\bar t$, $W(x(t))\le\hat\mu(r)$, which implies that $\d(x(t))\le r$.
\end{itemize}
The time $\bar t$ might be zero when $\d(z)\le r$. Of course,  condition  (ii) is significant only if $\bar t>0$.

Observing that \eqref{estW} implies 
 \noindent \bel{p}
  W(x(t))-W(z)\le -\frac{\gamma(W(x(t^{j-1}))) }{2}t \qquad\forall t\in[0,\bar t),
\eeq
the construction of a 
${\mathcal {KL}}$ function $\beta$ such that 
\bel{betaproof}
\d(y(t))\leq \beta(\d(z),t) \quad \forall t\in[0,\bar t) 
\eeq
can  be obtained arguing as  in \cite[p.600]{LMR16}, hence we omit it.  Together with (iii), this yields that 
$$
\d(x(t))\leq \max\{\beta(\d(z),t),r\} \quad \forall t \ge0.
$$
Moreover, when $\bar t>0$ by  summing up $j$ from 0 to  the last index $\tilde n$ such that $t^{\tilde n}< \bar t$,  from (ii) it follows that 
\bel{estt}
W(x(\bar t))-W(z)+\p\int_0^{\bar t} l(x(\tau),u(\tau))\,d\tau  \le -\frac{\gamma(W(x(t^{\tilde n}))) }{2}\bar t.
\eeq
Hence 
$$
\int_0^{\bar t} l(x(\tau),u(\tau))\,d\tau  \le \frac{W(z)}{\p}
$$
and this concludes the proof  since 
$$
\bar t\ge  \bar T_x^r=\inf\{t>0: \ \d(x(\tau))\le r \ \ \forall \tau\ge t\}.
$$ 
\qed
\begin{Remark}\label{RT}{\rm When  $\d(z)>r$,  the time  $ \bar T_x^r$, after which  any $(r,R)$ stable $\pi$-sampling trajectory $x$ starting from $z$ remains definitively in $B_r(\C)$,   is uniformly bounded  by a positive constant. Precisely,  using the above notations, by the previous proof   one can easily deduce the following upper bound  
\bel{BtU}
\bar T_x^r\le \bar t\le \frac{2(W(z)-W(x(\bar t)))}{\gamma(W(x(\bar t)))}= \frac{2\left(W(z)-\frac{\hat\mu(r)}{4}\right)}{\gamma(\hat\mu(r)/4)}.
\eeq
}
 \end{Remark}
\vsm
\subsection{Proof of the Euler  stabilizability with $(p_0,W)$-regulated cost} \label{3.2}
 Let us start with some preliminary results. In the sequel we make use of all  the notations introduced in the previous subsection.
\vsm 
  The following lemma  establishes a {\it uniform} lower bound for the time needed to admissible trajectories  starting from the same point  $z$ and approaching the target, to reach an $\varepsilon$-neighborhood  of the target.  
 
 \begin{Lemma}\label{Leps} Assume {\rm (H0)}. Given  $R>0$,   let us set
$$
\ds\tilde M(R):=\sup\{|f(x,u)|: \ \ x\in B_R(\C)\setminus \C, \ \ u\in U\}.
$$
Then  for any  $z\in\R^n\setminus\C$   such that $\d(z)\le R$ and $\varepsilon\in(0,\d(z))$,  
setting
\bel{Teps1}
T_\varepsilon:= \frac{\d(z)-\varepsilon}{\tilde M(R)}>0,
\eeq
  every  admissible trajectory-control pair $(x,u)\in {\mathcal{ A}}_f({ z})$ with $T_x\le+\infty$ and such that $\lim_{t\to T^-_x}\d(x(t))=0$,  verifies 
\bel{Teps}
\d(x(t))\ge\varepsilon \quad \forall t\in[0,T_\varepsilon].
\eeq 
As a consequence, $T_x\ge \frac{\d(z)}{\tilde M(R)}$.
\end{Lemma}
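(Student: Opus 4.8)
The plan is to bound the rate at which the distance from the target can decrease along any admissible trajectory that approaches $\C$, using the speed bound supplied by (H0). First I would fix $z$ with $\d(z)\le R$, pick $\varepsilon\in(0,\d(z))$, and take any $(x,u)\in\mathcal{A}_f(z)$ with $\lim_{t\to T_x^-}\d(x(t))=0$. The key observation is that, as long as $x$ stays inside $B_R(\C)$, the velocity is bounded: $|\dot x(t)|\le\tilde M(R)$ for a.e.\ such $t$, by the definition of $\tilde M(R)$ and the fact that $u(t)\in U$. Since $x$ is a Carath\'eodory solution, $t\mapsto x(t)$ is locally Lipschitz with constant $\tilde M(R)$ on any interval where $x([0,t])\subset B_R(\C)$.

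Next I would combine this with the $1$-Lipschitz continuity of the Euclidean distance function $\d(\cdot)=d(\cdot,\C)$. This gives, for $s<t$ in such an interval,
\[
|\d(x(t))-\d(x(s))|\le|x(t)-x(s)|\le\tilde M(R)\,(t-s).
\]
In particular $\d(x(t))\ge\d(z)-\tilde M(R)\,t$ as long as $x$ has remained in $B_R(\C)$. To make this rigorous I would argue that $x$ cannot leave $B_R(\C)$ before $\d$ has dropped below $\varepsilon$: indeed, starting from $\d(z)\le R$, the distance can only increase at rate $\tilde M(R)$ as well, but more simply, on the maximal initial interval $[0,\tau)$ on which $\d(x(t))\le R$ the above estimate holds, and $\d(z)-\tilde M(R)t>\varepsilon$ for $t\in[0,T_\varepsilon)$; since $\d$ stays between $\varepsilon$ and $R$ on this interval it never reaches the boundary value $R$ from below in a way that would end the interval before $T_\varepsilon$, so in fact $[0,T_\varepsilon]$ is contained in it (a short continuity/connectedness argument). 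Hence $\d(x(t))\ge\d(z)-\tilde M(R)t\ge\varepsilon$ for all $t\in[0,T_\varepsilon]$, which is \eqref{Teps}.

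Finally, for the consequence $T_x\ge\d(z)/\tilde M(R)$: letting $\varepsilon\to0^+$ in \eqref{Teps} shows $\d(x(t))\ge\d(z)-\tilde M(R)\,t$ for all $t<T_x$ with $x$ still in $B_R(\C)$; since $\lim_{t\to T_x^-}\d(x(t))=0$, we cannot have $T_x<\d(z)/\tilde M(R)$, for otherwise $\d(x(t))$ would stay bounded below by $\d(z)-\tilde M(R)T_x>0$ near $T_x$, contradicting the limit. (If $T_x=+\infty$ the bound is trivial.)

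The only genuinely delicate point is the bookkeeping that keeps $x$ inside $B_R(\C)$ long enough for the velocity bound to apply — i.e.\ verifying that the trajectory does not wander outside $B_R(\C)$ before time $T_\varepsilon$. This is handled by the elementary observation that on the first interval where $\d(x(\cdot))\le R$ the Lipschitz estimate forces $\d(x(t))\in[\varepsilon,R)$ for $t<T_\varepsilon$, so that interval extends at least to $T_\varepsilon$; everything else is a one-line application of the speed bound in (H0) together with the $1$-Lipschitz property of the distance function.
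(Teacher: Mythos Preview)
Your central idea --- combine the speed bound $|\dot x|\le\tilde M(R)$, valid while $x\in B_R(\C)\setminus\C$, with the $1$-Lipschitz continuity of $\d$ to obtain $|\d(x(t))-\d(x(s))|\le\tilde M(R)|t-s|$ --- is correct and gives \eqref{Teps} immediately \emph{provided} the trajectory remains in $B_R(\C)$ on $[0,T_\varepsilon]$. The gap lies precisely in establishing that proviso. Your assertion that $\d(x(\cdot))$ ``never reaches the boundary value $R$ from below in a way that would end the interval before $T_\varepsilon$'' is not justified and can fail: take $\d(z)=R$, so that the trajectory may exit $B_R(\C)$ at arbitrarily small positive times. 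Once outside $B_R(\C)$ you lose the bound on $|\dot x|$ supplied by (H0), so your Lipschitz estimate on $\d\circ x$ no longer applies; the ``maximal initial interval'' $[0,\tau)$ need not contain $[0,T_\varepsilon]$, and the argument as written stops there.

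The paper avoids this by reversing the viewpoint: instead of tracking an \emph{initial} interval where $\d(x(t))\le R$, it introduces $\bar\tau:=\sup\{t\ge0:\ \d(x(t))\ge\d(z)\}$ and $\tilde T_x^\varepsilon:=\inf\{t>\bar\tau:\ \d(x(t))\le\varepsilon\}$. On $[\bar\tau,\tilde T_x^\varepsilon]$ one has, by construction, $\varepsilon\le\d(x(t))\le\d(z)\le R$, so the speed bound is legitimately available there, and a triangle inequality with a point $\bar z^\varepsilon\in\partial\C$ realizing $\d(x(\tilde T_x^\varepsilon))$ gives $\tilde T_x^\varepsilon\ge T_\varepsilon$. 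Your route can be repaired --- observe that whenever $x(t)\notin B_R(\C)$ one trivially has $\d(x(t))>R>\varepsilon$, and each re-entry into $B_R(\C)$ occurs with $\d=R\ge\d(z)$, so the Lipschitz clock can be restarted from there --- but making this precise amounts to isolating exactly the sub-interval that the paper's $\bar\tau$ device packages in one line.
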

\begin{proof} Given  $(x,u)\in {\mathcal{ A}}_f({ z})$ as above, let us  set $\bar \tau:=\sup\{t\ge 0: \  \d(x(t))\ge\d(z)\}$. The time  $\bar \tau$ is clearly finite and defining
  $\tilde T_x^\varepsilon:=\inf\{t>\bar \tau: \ \d(x(t))\le\varepsilon\}$, one trivially has $0\le \bar \tau< \tilde T_x^\varepsilon$ and  $x([\bar \tau, \tilde T_x^\varepsilon])\subseteq \overline{B_R(\C)}\setminus \C$.   If   $\bar z^\varepsilon\in\partial\C$   verifies
$$
\varepsilon=\d(x(\tilde T_x^\varepsilon))=|x(\tilde T_x^\varepsilon)-\bar z^\varepsilon|,
$$
then the uniform bound \eqref{Teps} is a consequence of the following inequalities
$$
\d(z)=\d(x(\bar\tau))\le|x(\bar\tau)-\bar z^\varepsilon|\le |x(\bar\tau)-x(\tilde T_x^\varepsilon)|+|x(\tilde T_x^\varepsilon)-\bar z^\varepsilon|\le \tilde M(R  )\,\tilde T_x^\varepsilon+\varepsilon, 
$$
  implying that   $\tilde T_x^\varepsilon\ge  \frac{\d(z)-\varepsilon}{\tilde M(R)}= T_\varepsilon$. Indeed, $\d(x(t))\ge\d(z)>\varepsilon$ for all $t\in[0,\bar \tau]$ and  the definition of $\tilde T_x^\varepsilon$ implies that $ \d(x(t))>\varepsilon$ for all $t\in]\bar \tau, \tilde T_x^\varepsilon]$, so that $\d(x(t))\ge\varepsilon$  for all $t\in[0,T_\varepsilon]$. By the arbitrariness of $\varepsilon>0$, this implies that $T_x\le \d(z)/\tilde M(R  )$.
\end{proof} 

Next result allows us to determine, given a $\p$-MRF $W$, a positive constant $R$ and a sampling time $\delta>0$ small enough,  a radius  $r<R$ such that any $\pi$-sampling trajectory-control pair for \eqref{sis1} with initial point $z$ verifying $\d(z)\le R$ and with diam$(\pi)=\delta$ is $(r,R)$-stable.

 \begin{Lemma}\label{Lrdelta} Assume {\rm (H0)}. Let $W$ be a $\p$-MRF with $\p\ge0$  and for any pair $r$, $R>0$ with $r< R$, let $\delta=\delta(r,R)$  be   defined accordingly to \eqref{delta}. Then,  for every fixed $R>0$,  $\delta(\cdot,R)$ 
is positive  and increasing  and 
 $$
 \lim_{r\to 0^+}\delta(r,R)=0, \qquad \delta(R):=\lim_{r\to R^-}\delta(r,R)<+\infty.  
 $$
  \end{Lemma}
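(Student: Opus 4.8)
The plan is to trace the definition \eqref{delta} of $\delta(r,R)=\bar\delta(\hat\mu(r),\sigma(R))$ back through \eqref{conddelta} and to show that each of the two quantities in the minimum there is positive, increasing in $r$ (for fixed $R$), tends to $0$ as $r\to0^+$, and stays finite as $r\to R^-$. Recall from \eqref{conddelta} that
$$
\bar\delta(\hat\mu,\sigma)=\min\left\{\hat\delta\!\left(\tfrac{\hat\mu}{4},2\sigma\right),\ \tfrac{\hat\mu}{4Lm}\right\},
$$
where $L=L(\hat\mu,\sigma)$ is the Lipschitz constant of $W$ on $W^{-1}([\hat\mu/4,2\sigma])$ and $m=m(\sigma)$ is defined by \eqref{m1}. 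Since $\sigma(R)$ does not depend on $r$, the only $r$-dependence enters through $\hat\mu(r)=\underline g_{W,\d}(r)$, which by Lemma \ref{LW=d} is increasing, positive on $(0,\infty)$, tends to $0$ as $r\to0^+$, and has a finite limit $\hat\mu(R^-)\le\sigma(R)<+\infty$ as $r\to R^-$ (using monotonicity and the fact that $\{W\le\hat\mu(R)\}\subseteq\overline{B_R(\C)}\subseteq\{W\le\sigma(R)\}$).

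First I would handle the elementary term $\hat\mu/(4Lm)$. Here $m=m(\sigma(R))$ is a fixed finite positive constant (finite by (H0), namely \eqref{boundres}). For $L$: as $r$ increases, $\hat\mu(r)$ increases, so the interval $[\hat\mu(r)/4,2\sigma(R)]$ shrinks; hence the Lipschitz constant $L(\hat\mu(r),\sigma(R))$ of $W$ on that set is non-increasing in $r$. Consequently $\hat\mu(r)/(4L(\hat\mu(r),\sigma(R))m)$ is a product of the increasing factor $\hat\mu(r)$ with the non-decreasing factor $1/L$, hence increasing in $r$; it is positive since $\hat\mu(r)>0$ and $L,m<\infty$; it tends to $0$ as $r\to0^+$ because $\hat\mu(r)\to0$ while $1/L$ stays bounded (indeed $L$ is bounded below on the fixed compact set $W^{-1}([0,2\sigma(R)])$ away from the degenerate case, and one may take $L$ itself monotone by enlarging it if necessary, exactly as in the proof of Proposition \ref{3.3}); and as $r\to R^-$ it stays finite since $\hat\mu(r)\le\sigma(R)$ and $L\ge L(\hat\mu(R^-),\sigma(R))>0$.

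Next I would handle $\hat\delta(\hat\mu(r)/4,2\sigma(R))$. The function $\hat\delta(\hat\mu,\sigma)$ comes from Proposition \ref{3.5}, where for each pair $0<\hat\mu<\sigma$ it is a positive number guaranteeing the stated sampling estimates. The key structural point, which I would extract from the construction in \cite[Prop. 3.5]{LMR16} (and from the uniform-continuity/compactness arguments behind Proposition \ref{3.3}), is that $\hat\delta(\hat\mu,\sigma)$ can be chosen monotone: non-decreasing in $\hat\mu$ (a larger sublevel where one must only descend to makes the admissible step size larger) and non-increasing in $\sigma$ (a larger outer region forces a smaller step). Granting this, $r\mapsto\hat\delta(\hat\mu(r)/4,2\sigma(R))$ is a composition of the increasing map $r\mapsto\hat\mu(r)/4$ with a non-decreasing map, hence non-decreasing; it is positive for every $r\in(0,R)$; and it remains finite (indeed bounded) as $r\to R^-$. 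The limit as $r\to0^+$: here one must check that $\hat\delta(\hat\mu,\sigma)\to0$ as $\hat\mu\to0^+$ with $\sigma$ fixed. This follows because, by Lemma \ref{Leps} applied on $B_{2\sigma}$-type neighborhoods, a sampling trajectory needs a positive time, bounded below uniformly, to travel from the level $\sigma$ down to the level $\hat\mu$; but the per-step decrease of $W$ guaranteed by \eqref{dainserire} forces the step size to shrink in proportion to the target level $\hat\mu$ — more precisely, the admissible step at a point where $W\le\hat\mu$ is controlled by $\hat\mu/(Lm)$, which is exactly the second term already analyzed — so $\hat\delta(\hat\mu/4,2\sigma)\le C\hat\mu$ for a constant $C=C(\sigma)$, hence it tends to $0$. (If one wants to avoid this, one may simply note that $\delta(r,R)=\bar\delta$ is a minimum of two terms, and the second already tends to $0$; so $\delta(r,R)\le\hat\mu(r)/(4Lm)\to0$, which gives $\lim_{r\to0^+}\delta(r,R)=0$ directly, without needing the asymptotics of $\hat\delta$ itself.)

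Finally, combining: $\delta(r,R)$ is the minimum of two increasing, positive functions of $r$, hence increasing and positive; $\lim_{r\to0^+}\delta(r,R)=0$ follows from the second term (or from both); and $\delta(R):=\lim_{r\to R^-}\delta(r,R)$ is the minimum of two finite limits, hence finite. The main obstacle I anticipate is making rigorous the claim that $\hat\delta(\hat\mu,\sigma)$ from Proposition \ref{3.5} may be taken monotone in both arguments and that it can be chosen to vanish as $\hat\mu\to0^+$; this requires revisiting the proof of \cite[Prop. 3.5]{LMR16} rather than using it as a black box, and it is the only place where real work beyond bookkeeping with Lemma \ref{LW=d} is needed. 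As noted, one can sidestep the vanishing-as-$\hat\mu\to0$ issue entirely by using only the explicit second term $\hat\mu/(4Lm)$, so the genuinely unavoidable point is just the monotonicity of $\hat\delta$, which is built into its construction.
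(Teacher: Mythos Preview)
Your proposal is correct and follows essentially the same route as the paper's proof: both reduce to analyzing $\hat\mu\mapsto\bar\delta(\hat\mu,\sigma)$ for fixed $\sigma=\sigma(R)$, handle the explicit term $\hat\mu/(4L(\hat\mu,\sigma)m)$ directly via the monotonicity of $L$ in $\hat\mu$, and defer the monotonicity of $\hat\delta(\hat\mu/4,2\sigma)$ in $\hat\mu$ to the construction in \cite[Prop.~3.5]{LMR16}. Your observation that the vanishing at $r\to0^+$ and the finiteness at $r\to R^-$ follow already from the second term alone (so the asymptotics of $\hat\delta$ are irrelevant and only its monotonicity matters) is exactly how the paper argues it too; your digression invoking Lemma~\ref{Leps} to get $\hat\delta\to0$ is unnecessary and a bit loose, but you correctly flag it as avoidable.
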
 
  
  \begin{proof}	
  By  Subsection \ref{Subsample},  we have that 
  	$$\delta(r,R)=\bar \delta(\mu(r),\sigma(R)),$$
  	where $\bar \delta$ is defined as in  \eqref{conddelta}, in  Proposition \ref{pinvariant}. Since the map $r\mapsto\hat \mu(r)$ is increasing, $\hat \mu(r)$ vanishes as $r\to 0^+$ and $\hat \mu(r)$ is bounded by $\sigma(R)$ as $r\to R^-$,    to conclude it suffices to show that for every $\sigma>0$ the map $\hat \mu\mapsto \bar\delta(\hat \mu, \sigma)$ (a) is increasing in $(0,\sigma)$, (b) vanishes in $0$ and (c) is bounded as $\hat\mu$ tends to $\hat\mu(R)$.	Let $L(\hat\mu,\sigma)$ be the Lipschitz constant of $W$ on $W^{-1}([\hat \mu,2\sigma])$, let $m=m(\sigma)$ be  as in \eqref{m1} and  recall from \eqref{conddelta} the following  definition 
  	$$\bar\delta(\hat\mu,\sigma)=\min\left\{\hat\delta\left(\frac{\hat\mu}{4},2\sigma\right), \frac{\hat \mu}{4L(\hat \mu,\sigma)\,m}\right\}.$$
  	  We note that $\hat\mu\mapsto L(\hat\mu,\sigma)$ is decreasing in $(0,\sigma)$: this implies at once conditions (b) and (c) and the fact that, for every $\sigma>0$, the map $\hat \mu\mapsto \hat \mu/4L(\hat \mu,\sigma)m$ is increasing.  
  	  To conclude it is left to show that, for every $\sigma>0$, the map $\hat \mu\mapsto \hat \delta(\hat \mu,\sigma)$ is increasing in $(0,\sigma)$. This monotonicity that can be easily derived  arguing as above, by the definitions of the constants in the proof of  \cite[Proposition 3.5]{LMR16}, hence we omit to prove it.
 \end{proof} 
  
Owing to Lemma \ref{Lrdelta}, given a $\p$-MRF $W$  and a positive constant  $R$,  we can assume without loss of generality that $\delta(\cdot,R)$ defined as above is strictly increasing and continuous. Therefore,  for any $R>0$ we can define the inverse of the map  $r\mapsto \delta(r):=\delta(r ,R)$, given by
\bel{rdelta}
 \delta\mapsto r(\delta) \qquad\forall \delta\in[0,\delta(R)],
\eeq
  which is continuous, strictly increasing and  such that $r(0)=0$ and $r(\delta(R))=R$.  
   As an immediate consequence,  by the sample stabilizability of   \eqref{Egen} with $(p_0,W)$-regulated cost we get the following result.
  \begin{Lemma}\label{Lrdelta2} Assume {\rm (H0)} and let $W$ be a $\p$-MRF with $\p\ge0$. Then there exists a function $\beta\in{\mathcal {KL}}$ such that,  for each pair $R>0$  and $\delta\in(0,\delta(R))$,  
for every partition $\pi$ of 
$[0,+\infty)$ with  $\text{\em diam}(\pi)=\delta$ and for any initial state 
$z\in\R^n\setminus\C$ such that ${\bf d}(z)\le R$, any  $\pi$-sampling  
trajectory-control pair  $(x,u)$  of   \eqref{Egen} from $z$ is  defined in $[0,+\infty)$  and verifies:
\bel{betaSdelta}
{\bf d}(x(t))\le\max\{\beta(R,t), r(\delta)\} \qquad \forall t\ge0.
\eeq
Moreover, if $\p>0$, 
\bel{estWcdelta}
x^0(\bar T_x^{r(\delta)})=\int_0^{\bar T_x^{r(\delta)}}l(x(\tau),u(\tau))\,d\tau\le  \frac{W(z)}{\p},
\eeq
where
 $
\bar T_x^{r(\delta)}=\inf\{t>0: \ \d(x(\tau))\le r(\delta) \ \ \forall \tau\ge t\}
 $, as in   \eqref{bar T}. 
\end{Lemma}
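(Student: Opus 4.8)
The plan is to obtain the statement directly from the sample stabilizability with $(p_0,W)$-regulated cost established in Subsection~\ref{Subsample}, after re-indexing that result by the sampling time $\delta$ instead of by the outer radius $r$. First I would fix once and for all the $\mathcal{KL}$ function: the argument in Subsection~\ref{Subsample} produces, starting from the decrease estimate \eqref{p} and the construction of \cite[p.600]{LMR16}, a \emph{single} $\beta\in\mathcal{KL}$ --- depending neither on $r$, nor on $R$, nor on the chosen partition --- such that, for every $0<r<R$ and every partition $\pi$ with $\mathrm{diam}(\pi)\le\delta(r,R)$ (with $\delta(r,R)$ as in \eqref{delta}), any $\pi$-sampling trajectory-control pair $(x,u)$ issued from a point $z$ with $\d(z)\le R$ belongs to $\mathcal{A}_f(z)$, is defined on all of $[0,+\infty)$, is $(r,R)$-stable, and, when $\p>0$, satisfies the cost bound \eqref{estWc}. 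This $\beta$ will be the function claimed in the lemma.

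Next I would invoke Lemma~\ref{Lrdelta} together with the normalization observed right after it: for each fixed $R>0$ the map $\delta(\cdot,R)$ may be taken strictly increasing and continuous on $[0,R]$, with $\delta(0,R)=0$ and $\delta(R,R)=\delta(R)<+\infty$, hence it admits a continuous, strictly increasing inverse $\delta\mapsto r(\delta)$ on $[0,\delta(R)]$ with $r(0)=0$ and $r(\delta(R))=R$, exactly as in \eqref{rdelta}. Then, given $R>0$ and $\delta\in(0,\delta(R))$, I would simply set $r:=r(\delta)$, which lies strictly between $0$ and $R$ and satisfies $\delta(r,R)=\delta$ by the very definition of the inverse. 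Consequently every partition $\pi$ with $\mathrm{diam}(\pi)=\delta$ has $\mathrm{diam}(\pi)\le\delta(r,R)$, so the conclusions of the sample stabilizability apply verbatim with this $r$: any $\pi$-sampling trajectory-control pair from $z$ with $\d(z)\le R$ is defined on $[0,+\infty)$ and satisfies $\d(x(t))\le\max\{\beta(R,t),r(\delta)\}$ for all $t\ge0$, which is \eqref{betaSdelta}; and when $\p>0$, specializing \eqref{estWc} to $r=r(\delta)$ gives precisely \eqref{estWcdelta}, with $\bar T_x^{r(\delta)}$ as in \eqref{bar T}.

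I do not expect a genuine obstacle here: the whole point is a legitimate change of variable from $r$ to $\delta$ furnished by Lemma~\ref{Lrdelta}. The only two points deserving a line of care are that the $\mathcal{KL}$ function produced in Subsection~\ref{Subsample} is truly uniform (so that a single $\beta$ can be quoted here), and that the normalization of $\delta(\cdot,R)$ keeps the correspondence $\delta\in(0,\delta(R))\leftrightarrow r(\delta)\in(0,R)$, so that we remain strictly inside the regime $0<r<R$ for which Definition~\ref{sstab} was verified; the boundary values $r(0)=0$ and $r(\delta(R))=R$ are excluded, consistently with the open interval $\delta\in(0,\delta(R))$ in the statement.
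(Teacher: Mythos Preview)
Your proposal is correct and matches the paper's approach exactly: the paper introduces the inverse map $r(\delta)$ via Lemma~\ref{Lrdelta} and then states Lemma~\ref{Lrdelta2} as ``an immediate consequence'' of the sample stabilizability with $(p_0,W)$-regulated cost established in Subsection~\ref{Subsample}, without giving a separate proof. Your write-up merely spells out this immediate consequence in detail, including the two care points (uniformity of $\beta$ and the open range $0<r(\delta)<R$), and is, if anything, more explicit than the paper itself.
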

 
 \begin{Remark}\label{Rex}{\rm When $f$ and $l$ verify hypothesis (H0) and  system \eqref{Egen} is sample stabilizable to $\C$ with $(p_0,W)$-regulated cost,  there always exist continuous  Euler solutions to \eqref{Egen}. Indeed,   for any  $z$ with $0<\d(z)\le R$ and  any  sequence $(x_i,u_i)$ of $\pi_i$-sampling trajectory-control pairs of \eqref{Egen} with $x(0)=z$, $\delta_i:=$diam$(\pi_i)\to0$ as $i\to+\infty$,  and associated costs $x^0_i$, it turns out that $(x_i^0, x_i)$  is equi-Lipschitz  continuous on $[0,+\infty)$ with Lipschitz  constant $m>0$. Hence   the existence of continuous, actually $m$-Lipschitz continuous Euler solutions to \eqref{Egen} from $z$ with $m$-Lipschitz continuous Euler costs follows straightforwardly by Ascoli-Arzel\'a's Theorem.}
\end{Remark}

\vsm  
We are now in position to  prove that, if we assume (H0) and $W$ is a $\p$-MRF with $\p>0$,  the feedback $K$  Euler-stabilizes the system \eqref{Egen} to $\C$ with $(p_0,W)$-regulated cost.  Given $z\in\R^n\setminus\C$, let  $(\x^0,\x)$ be an Euler solution of \eqref{Egen} with initial condition $\x(0)=z$. 
 By definition there exist a sequence of partitions $(\pi_i)$ of $[0,+\infty)$
such that $\delta_i:=\text{diam}(\pi_i)\to 0$ as $i\to \infty$, a sequence of $\pi_i$-sampling  trajectory-control pairs $(x_i,u_i)$ for  \eqref{Egen}  with  $x_i(0)=z$ for each $i$, and associated costs $x^0_i$,  satisfying   
\bel{hpE}
(x_i^0,x_i)\to  (\x^0,\x) \ \text{locally uniformly on $[0,+\infty)$.}  
\eeq
 Set $R:=\d(z)$ and let $\beta$, $\delta(R)$ and $r:[0,\delta(R)]\to[0,R]$ be as in Lemma \ref{Lrdelta2}. Since $\delta_i\to 0$, we can assume without loss of generality that $\delta_i<\delta(R)$ for all $i$. Hence Lemma \ref{Lrdelta2}  implies that, for every $i$, 
\bel{euleri1}
\d(x_i(t))\leq \max\{\beta(\d(z),t),r(\delta_i)\} \qquad\forall t\ge0
\eeq
and
\bel{euleri2}
x_i^0(t)\le  \frac{W(z)}{\p} \quad \forall t\in[0,\bar T_{x_i}^{r(\delta_i)}].
\eeq
As  $i\to \infty$, we have that $\delta_i\to 0$ and  consequently  $r(\delta_i)\to 0$.  Then by \eqref{hpE} and  \eqref{euleri1}  we obtain that
\bel{thE0}
\d(\x(t))\leq \beta(\d(z),t) \qquad \forall t\ge 0.
\eeq
Hence $\displaystyle\lim_{t\to+\infty}\d(\x(t))=0$ and there exists 
$$
T_\x:=\inf\{\tau\ge 0: \ \ \lim_{t\to \tau^-}\d(\x(t))=0\}\le +\infty.
$$
 To conclude the proof it remains only  to show that 
\bel{thE}
 \lim_{t\to T_\x^-} \x^0(t) \le  \frac{W(z)}{\p},
\eeq 
where the limit  is well defined, since $\x^0$,  pointwise limit of  monotone nondecreasing functions,  is monotone nondecreasing.
Passing eventually to a subsequence, we set $\bar T:=\lim_i\bar T_{x_i}^{r(\delta_i)}$.  In view of Lemma \ref{Leps},   $\bar T$ satisfies
\bel{barTpos}
\bar T\ge \frac{\d(z)}{m}>0.
\eeq
  Then for any $t\in[0,\bar T)$ one has   $\bar T_{x_i}^{r(\delta_i)}>t$ for all $i$ sufficiently large and,   taking the limit as $i\to\infty$ in  \eqref{euleri2}, by \eqref{hpE} it follows that 
\bel{estbarT}
\x^0(t)\le  \frac{W(z)}{\p} \qquad\forall t\in[0,\bar T). 
\eeq
If $\bar T=+\infty$,  this implies directly the thesis  \eqref{thE}.   If instead  $\bar T<+\infty$,  the definition of $\bar T_{x_i}^{r(\delta_i)}$ yields that  
$$
\d(x_i(\bar T_{x_i}^{r(\delta_i)}))= r(\delta_i).
$$
Moreover,  by the  locally uniform convergence of $x_i$ to $\x$ and the  $m$-Lipschitz continuity of   $\x$,   we get the following estimate 
$$
\begin{array}{l}
\d(\x(\bar T))\le
 |\x(\bar T)-\x(\bar T_{x_i}^{r(\delta_i)})| +|\x(\bar T_{x_i}^{r(\delta_i)})-x_i(\bar T_{x_i}^{r(\delta_i)})|+\d(x_i(\bar T_{x_i}^{r(\delta_i)})), \end{array}
$$
where the r.h.s. tends to zero as $i\to+\infty$. Thus  we have in any case
\bel{Tpos}
0< \frac{\d(z)}{m}\le  T_\x\le \bar T, 
\eeq
where the first inequality is again a consequence of the $m$-Lipschitz continuity of $\x$.  This concludes the proof, since   \eqref{estbarT} implies now the thesis \eqref{thE}.  \qed
 
\section{On the notion of $\p$-Minimum Restraint Function}\label{RMRF}
In Subsection \ref{MRF1} we  obtain an equivalent formulation of the definition of $\p$-MRF. Using this condition,  in Subsection \ref{MRFL} we prove that, when the data $f$ and $l$ are locally Lipschitz continuous, the existence of a  locally Lipschitz, not necessarily semiconcave, $\p$-MRF $W$, still guarantees sample and Euler stabilizability of the control system \eqref{Egen} to $\C$ with $(p_0,W)$-regulated cost. Subsection \ref{SubRMRF} is devoted to extend these results to the original notion of $\p$-MRF  introduced in \cite{MR13}.
 
\subsection{An equivalent notion of $\p$-MRF}\label{MRF1} 
 
\begin{Proposition}\label{Wweak}
Assume {\rm (H0)}.  Let $W :\ol{\R^n\setminus \T}\to[0,+\infty)$, be a continuous function, which is  positive definite,  proper, and semiconcave on $\R^n\setminus\T$. 
Then $W$ is a $\p$-MRF for some $\p\ge0$ if and only if {for any}   continuous function $W_1 :\ol{\R^n\setminus \T}\to[0,+\infty)$, positive definite, and  proper on $\R^n\setminus\T$, {there is some}  continuous, strictly increasing function $\tilde\gamma:(0,+\infty)\to(0,+\infty)$ such that 
 $$ 
 H (x,\p, D^*W(x) )\le-\tilde\gamma(W_1(x)) \quad \forall x\in {{\R^n}\setminus\T}.      
$$
\end{Proposition}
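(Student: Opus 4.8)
The plan is to prove the two implications separately. The direction ``$\Leftarrow$'' is immediate: specializing the hypothesis to the particular choice $W_1=W$ yields a continuous, strictly increasing $\tilde\gamma$ with $H(x,\p,D^*W(x))\le-\tilde\gamma(W(x))$ for all $x\in\R^n\setminus\T$, which is precisely the decrease condition \eqref{MRH} defining a $\p$-MRF (with $\gamma=\tilde\gamma$). So the content is entirely in the forward direction.

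For ``$\Rightarrow$'', assume $W$ is a $\p$-MRF, so there is a continuous strictly increasing $\gamma:(0,+\infty)\to(0,+\infty)$ with $H(x,\p,D^*W(x))\le-\gamma(W(x))$ everywhere. Fix an arbitrary continuous, positive definite, proper $W_1$. The idea is to compare the level sets of $W$ and $W_1$ via Lemma \ref{LW=d}: applying that lemma to the pair $(W,W_1)$ we obtain the increasing functions $\bar g=\bar g_{W,W_1}$ and $\underline g=\underline g_{W,W_1}$ with the limits \eqref{Llim} and, crucially, the pointwise bound \eqref{Ldis}, namely $\underline g(W_1(x))\le W(x)\le\bar g(W_1(x))$ for all $x\in\R^n\setminus\T$. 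Since $\gamma$ is increasing and $W(x)\ge\underline g(W_1(x))$, monotonicity gives $-\gamma(W(x))\le-\gamma(\underline g(W_1(x)))$, hence
$$
H(x,\p,D^*W(x))\le-\gamma(W(x))\le-\gamma\big(\underline g(W_1(x))\big)\qquad\forall x\in\R^n\setminus\T.
$$
Thus the natural candidate is $\tilde\gamma:=\gamma\circ\underline g$.

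It remains to check that $\tilde\gamma=\gamma\circ\underline g$ is a legitimate comparison function, i.e.\ continuous and strictly increasing from $(0,+\infty)$ to $(0,+\infty)$. Positivity and the fact that $\tilde\gamma$ maps into $(0,+\infty)$ are clear since $\underline g>0$ and $\gamma>0$. Monotonicity: $\underline g$ is increasing by Lemma \ref{LW=d} and $\gamma$ is strictly increasing, so $\tilde\gamma$ is increasing; if one insists on \emph{strict} monotonicity one may first replace $\gamma$ by a strictly increasing continuous minorant and, if necessary, enlarge $\underline g$ slightly to a strictly increasing continuous function still bounded above by the original $\underline g$ (this is harmless since the decrease condition is an upper bound on $H$ and decreasing $\tilde\gamma$ only weakens the claimed inequality — actually we need $\tilde\gamma$ small, so any strictly increasing continuous $\tilde\gamma\le\gamma\circ\underline g$ works, e.g.\ via the construction used in \cite{LMR16} to regularize ${\mathcal K}$-functions). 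Continuity is the one point needing a word of care, since Lemma \ref{LW=d} only asserts that $\underline g$ is increasing, not continuous; the remedy is the same regularization: pick any continuous strictly increasing $\hat g:(0,+\infty)\to(0,+\infty)$ with $\hat g\le\underline g$ and $\lim_{r\to0^+}\hat g(r)=0$, $\lim_{r\to+\infty}\hat g(r)=+\infty$ (possible because $\underline g$ already has these limits by \eqref{Llim}), and set $\tilde\gamma:=\gamma\circ\hat g$. Then $\tilde\gamma$ is continuous, strictly increasing, $(0,+\infty)$-valued, and $H(x,\p,D^*W(x))\le-\gamma(W(x))\le-\gamma(\underline g(W_1(x)))\le-\tilde\gamma(W_1(x))$, completing the proof. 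The only mild obstacle is this bookkeeping about continuity/strict monotonicity of the comparison function; everything substantive is delivered by Lemma \ref{LW=d}.
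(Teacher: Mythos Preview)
Your proof is correct and follows essentially the same route as the paper: for the forward implication you invoke Lemma~\ref{LW=d} to get $W(x)\ge\underline g_{W,W_1}(W_1(x))$ and then take $\tilde\gamma$ to be a continuous, strictly increasing minorant of $\gamma\circ\underline g_{W,W_1}$, exactly as in the paper's Proposition~\ref{W=d}. For the converse you take the shortcut of specializing $W_1=W$, which is a minor simplification of the paper's argument (which instead swaps the roles of $W$ and $W_1$ and applies Lemma~\ref{LW=d} once more); both are valid.
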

 For instance,  one can   choose $W_1=\d$,   distance function from  $\C$.
\vsm
Proposition \ref{Wweak} is a consequence of the following more general result, involving  a locally Lipschitz,   not necessarily semiconcave, function  $W$. 
\begin{Proposition}\label{W=d} 
Assume {\rm (H0)} and let    $W:\ol{\R^n\setminus {\T}}\to[0,+\infty)$ be  a continuous map, which is  locally Lipschitz, positive definite, and  proper on $\R^n\setminus\T$.  Given $\p\ge 0$ and a set  $\Omega\subseteq {{\R^n}\setminus\T}$,   the following statements are equivalent:
\begin{itemize}
\item[(i)] $W$ verifies the decrease condition 
  \bel{LMRH} 
    H (x,\p,  \partial_PW(x))\le-\gamma(W(x)) \quad \forall x\in \Omega,     
\eeq
for some continuous, strictly increasing function $\gamma:(0,+\infty)\to(0,+\infty)$;
\item[(ii)]
for any continuous function $W_1 :\ol{\R^n\setminus \T}\to[0,+\infty)$ which is  positive definite  and  proper on $\R^n\setminus\T$,  there exists  some continuous, strictly increasing function $\tilde\gamma:(0,+\infty)\to(0,+\infty)$ such that $W$ verifies
\bel{PMRH} 
    H (x,\p,  \partial_PW(x) )\le-\tilde\gamma(W_1(x)) \quad \forall x\in \Omega.      
\eeq 
\end{itemize}
\end{Proposition}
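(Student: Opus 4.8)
The plan is to prove the two implications separately, observing that essentially all the content lies in $(i)\Rightarrow(ii)$, while $(ii)\Rightarrow(i)$ comes for free. For the latter I would simply take $W_1:=W$ in $(ii)$: since $W$ is locally Lipschitz it is in particular continuous, and it is positive definite and proper on $\R^n\setminus\T$ by hypothesis, so $W$ is an admissible choice of $W_1$; the resulting function $\tilde\gamma$ then satisfies $H(x,\p,\partial_PW(x))\le-\tilde\gamma(W(x))$ for all $x\in\Omega$, which is exactly \eqref{LMRH} with $\gamma:=\tilde\gamma$.

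For $(i)\Rightarrow(ii)$, fix $W_1$ as in the statement and apply Lemma \ref{LW=d} to the pair $(W,W_1)$. This produces an increasing function $\underline g:=\underline g_{W,W_1}:(0,+\infty)\to(0,+\infty)$ which, by \eqref{Ldis}, satisfies $\underline g(W_1(x))\le W(x)$ for every $x\in\R^n\setminus\T$, hence for every $x\in\Omega$. Since $\gamma$ is increasing, \eqref{LMRH} then gives, for every $x\in\Omega$ and every $p\in\partial_PW(x)$,
$$
H(x,\p,p)\le-\gamma(W(x))\le-\gamma\big(\underline g(W_1(x))\big).
$$
So it would already be enough to pick $\tilde\gamma:=\gamma\circ\underline g$; the only defect is that $\underline g$, and hence $\psi:=\gamma\circ\underline g:(0,+\infty)\to(0,+\infty)$, is a priori merely increasing, not continuous nor strictly increasing. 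The remaining, purely one-dimensional, step is therefore to replace $\psi$ by a continuous, strictly increasing, strictly positive minorant. Being monotone, $\psi$ is locally bounded on $(0,+\infty)$ (and $\lim_{r\to0^+}\psi(r)$ exists and is finite), so $A(s):=\frac{2}{s}\int_{s/2}^{s}\psi(\tau)\,d\tau=2\int_{1/2}^{1}\psi(su)\,du$ is well defined for $s>0$. By dominated convergence $A$ is continuous; writing $A$ in the second form and using that $\psi$ is increasing, $A$ is nondecreasing; and $\psi(s/2)\le A(s)\le\psi(s)$, so $0<A(s)\le\psi(s)$. Setting finally
$$
\tilde\gamma(s):=\frac{s}{1+s}\,A(s),\qquad s>0,
$$
one obtains a continuous, strictly positive, strictly increasing function (the product of the positive nondecreasing $A$ with the positive strictly increasing map $s\mapsto s/(1+s)$), with $\tilde\gamma(s)<A(s)\le\psi(s)$. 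Plugging this into the displayed estimate gives $H(x,\p,p)\le-\psi(W_1(x))\le-\tilde\gamma(W_1(x))$ for all $x\in\Omega$ and $p\in\partial_PW(x)$, i.e. \eqref{PMRH}, so $(ii)$ follows.

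I do not expect a genuine obstacle here: once Lemma \ref{LW=d} is invoked, everything reduces to the elementary fact that an increasing, strictly positive function on $(0,+\infty)$ admits a continuous, strictly increasing, strictly positive minorant. The only point requiring a little care is carrying out this regularization uniformly down to the target, i.e. as $s\to0^+$, without assuming anything on $\underline g$ beyond monotonicity and the limits in \eqref{Llim} supplied by Lemma \ref{LW=d}; the sliding-average construction above is tailored exactly for that, since $A(s/2)\le A(s)$ keeps positivity near the target. Finally, I would note at the outset that at points $x$ where $\partial_PW(x)=\emptyset$ both \eqref{LMRH} and \eqref{PMRH} hold vacuously, so such points need no separate discussion, and that the argument uses nothing about $\Omega$ or about the sign of $\p$, consistently with the generality of the statement.
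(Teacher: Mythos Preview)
Your proposal is correct. For $(i)\Rightarrow(ii)$ you follow the paper's approach exactly: invoke Lemma~\ref{LW=d} to get $\underline g(W_1(x))\le W(x)$, compose with $\gamma$, and then take a continuous strictly increasing minorant of $\gamma\circ\underline g$. The paper simply asserts the existence of such a minorant, whereas you spell out an explicit sliding-average construction; this is extra detail rather than a different idea.

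For $(ii)\Rightarrow(i)$ your argument is actually simpler than the paper's. The paper reuses Lemma~\ref{LW=d} with the roles of $W$ and $W_1$ reversed, producing $\gamma$ as a regularized minorant of $\tilde\gamma\circ\underline g_{W_1,W}$. You instead observe that $W$ itself is an admissible choice of $W_1$ in the universal statement $(ii)$, so \eqref{PMRH} with $W_1=W$ immediately gives \eqref{LMRH} with $\gamma:=\tilde\gamma$. Both are valid; yours avoids a second appeal to Lemma~\ref{LW=d} and a second regularization step, at the cost of using the full strength of the universal quantifier in $(ii)$.
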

\begin{proof} Let us first prove that (i) $\Rightarrow$ (ii). Assume  that  $W$ verifies the decrease condition \eqref{LMRH}.  
Given an arbitrary continuous  map $W_1$, positive definite and proper in $\R^n\setminus\C$, let   $\tilde\gamma:(0,+\infty)\to(0,+\infty)$ be a continuous, strictly increasing approximation from below of the increasing map $r\mapsto \gamma\circ \underline{g}_{\, W,W_1}(r)$, where $\underline{g}_{\, W,W_1}$ is defined accordingly to Lemma \ref{LW=d}. Then by \eqref{Ldis}, for any $x\in \R^n\setminus\C$, one has
$$
W(x)\ge \underline{g}_{\, W,W_1}(W_1(x)) \ \Longrightarrow \  \gamma(W(x))\ge \gamma\circ\underline{g}_{\, W,W_1}(W_1(x))\ge \tilde\gamma(W_1(x)),
$$
so that \eqref{LMRH}  implies \eqref{PMRH} for such  $\tilde\gamma$. 

To prove that (ii) $\Rightarrow$ (i), it is enough to invert the roles of $W$ and $W_1$. Precisely, if  \eqref{PMRH} is verified for some $W$,  $W_1$ and  $\tilde\gamma$ as in the statement of the proposition,  arguing as above one obtains that  $W$  verifies  \eqref{LMRH} choosing as $\gamma:(0,+\infty)\to(0,+\infty)$ any continuous, strictly increasing approximation from below of the increasing map $r\mapsto \tilde\gamma\circ \underline{g}_{\,  W_1,W}(r)$.
\end{proof}

\vsm  
\begin{proof}[Proof of Proposition \ref{Wweak}]   The only non trivial fact in order to derive  Proposition \ref{Wweak} from  Proposition \ref{W=d}, is that \eqref{LMRH} involves the proximal subdifferential $\partial_PW(x)$ at $x$  instead of the set of limiting gradients  $D^*W(x)$ at $x$, considered in the decrease condition for a $\p$-MRF. However, when $W$ is locally Lipschitz continuous, condition  \eqref{LMRH} with $\Omega=\R^n\setminus\C$  implies  readily the following inequality:
$$
H (x,\p,  \partial_LW(x))\le-\gamma(W(x)) \quad \forall x\in {{\R^n}\setminus\T},     
$$
where $\partial_LW(x)$ denotes the limiting subdifferential at $x$.  This concludes the proof, since a $\p$-MRF $W$ is locally semiconcave and therefore $\partial_LW(x)=D^*W(x)$ at any $x$. \end{proof}

 \subsection{Lipschitz continuous $\p$-MRF}\label{MRFL} 
Under  the following hypothesis:
\vsm
\noindent {\bf (H1)} The sets $U\subset\R^m$, $\T\subset \R^n$ are  closed and the boundary  $\partial\T$ is compact.   $f:\overline{(\R^n\setminus\C)}\times U\to\R^n$,   $l: \overline{(\R^n\setminus\C)}\times U\to[0,+\infty)$ are continuous functions such that for  every compact subset ${\mathcal K}\subset \overline{\R^n\setminus\C}$ there exist  $M_f$, $M_l$,  $L_f$, $L_l>0$ such that
$$
\left\{\begin{array}{l}
|f(x,u)|\le M_f, \quad l(x,u)\le M_l \qquad \forall (x,u)\in {\mathcal K}\times U,  \\ [1.5ex]
|f(x_1,u)-f(x_2,u)|\le L_f|x_1-x_2|,  \\ [1.5ex]
 |l(x_1,u)-l(x_2,u)|\le L_l |x_1-x_2|\qquad \forall (x_1,u), \, (x_2,u)\in {\mathcal K}\times U,
\end{array}\right.
$$
\vsm 
 \noindent   we obtain the main result of this section:

  \begin{Theorem}\label{TLip} Assume {\rm (H1)} and let $\p\ge 0$. Let $W:\overline{ \R^n\setminus\C }\to[0,+\infty)$ be a locally Lipschitz continuous map on $\overline{\R^n\setminus\C}$,  such that $W$ is positive definite, and  proper on $\R^n\setminus\T$, and verifies the decrease condition\begin{equation}\label{TLipCondition}
    H (x,\p,  \partial_PW(x) )\le-\tilde\gamma(W_1(x)) \quad \forall x\in {{\R^n}\setminus\T},      
\end{equation}
  for some continuous, strictly increasing function $\tilde\gamma:(0,+\infty)\to(0,+\infty)$  and some continuous function $W_1 :\ol{\R^n\setminus \T}\to[0,+\infty)$, positive definite, and  proper on $\R^n\setminus\T$. Then there exists a $\ds\frac{\p}{2}$-MRF $\bar W$, which also satisfies $\bar W(x)\leq W(x)$ for all $x\in \R^n\setminus\T$. \end{Theorem}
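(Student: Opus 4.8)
The plan is to smooth $W$ by an inf-convolution (Lasry--Lions regularization) so as to produce a locally semiconcave function $\bar W$ that still dominates the decrease condition, at the price of halving $\p$. Concretely, for $\lambda>0$ one sets
\bel{infconvplan}
W_\lambda(x):=\inf_{y\in\overline{\R^n\setminus\C}}\left\{W(y)+\frac{1}{2\lambda}|x-y|^2\right\},
\eeq
and recalls the standard facts (see e.g. \cite{CS}): $W_\lambda$ is locally semiconcave, $W_\lambda\le W$, $W_\lambda\to W$ locally uniformly as $\lambda\to0^+$, and if $p\in D^*W_\lambda(x)$ is a limiting gradient attained at a minimizing $y=y_\lambda(x)$, then $p=(x-y)/\lambda$ and $p\in\partial_PW(y)$ with $|x-y|\le C\lambda$ on compact sets, where $C$ depends on the local Lipschitz constant of $W$. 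The first step is therefore to record these properties, together with the fact that $y_\lambda(x)\to x$ uniformly on compacts, so that $W_\lambda$ is still positive definite and proper on $\R^n\setminus\C$ once we also arrange $W_\lambda$ to vanish on $\partial\C$ (which holds automatically since $W$ does and $W_\lambda\le W$, $W_\lambda\ge 0$ after a harmless truncation near the target).

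The second step is to transfer the decrease condition. Fix a compact set $\mathcal K\subset\R^n\setminus\C$ and let $x\in\mathcal K$, $p\in D^*W_\lambda(x)$, attained at $y=y_\lambda(x)$ with $p\in\partial_PW(y)$. By \eqref{TLipCondition} there is $u\in U$ (in fact in a fixed ball, by Proposition \ref{3.3} applied to $W$ — or one simply works with the Hamiltonian as an infimum) with
$$
\langle p,f(y,u)\rangle+\p\,l(y,u)\le -\tilde\gamma(W_1(y))+\varepsilon
$$
for any prescribed $\varepsilon>0$. Using (H1), $|f(y,u)-f(x,u)|\le L_f|x-y|\le L_fC\lambda$ and $|l(y,u)-l(x,u)|\le L_lC\lambda$, and $|p|\le L$ on $\mathcal K$; hence
$$
\langle p,f(x,u)\rangle+\p\,l(x,u)\le -\tilde\gamma(W_1(y))+(LL_f+\p L_l)C\lambda+\varepsilon.
$$
Since $W_1$ and $y\mapsto y_\lambda(x)$ are continuous and $y_\lambda(x)\to x$, one has $\tilde\gamma(W_1(y))\ge \tilde\gamma(W_1(x))-o(1)$ uniformly on $\mathcal K$ as $\lambda\to0$; absorbing the $O(\lambda)$ error into half of $\tilde\gamma$ on each compact level set of $W_\lambda$ (possible because $\tilde\gamma>0$ is bounded below on such sets), one gets, for $\lambda$ small enough depending on $\mathcal K$,
$$
H(x,\tfrac{\p}{2},p)\le \langle p,f(x,u)\rangle+\tfrac{\p}{2}l(x,u)\le -\tfrac12\tilde\gamma(W_1(x)).
$$
Here the halving of $\p$ is what creates the slack $\tfrac{\p}{2}\,l(x,u)\ge0$ needed to swallow the Lipschitz error terms coming from moving $l$ from $y$ to $x$; this is the conceptual heart of the argument and, I expect, the step requiring the most care, since the estimate must be made uniform on compacts while $\lambda$ is sent to $0$, and one must check that the resulting modulus is again a genuine $\gamma$ (continuous, strictly increasing, positive) after composing with $\underline g_{W_1,\bar W}$ as in Proposition \ref{W=d}.

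The third step is a diagonal/patching argument: choosing an exhaustion $\mathcal K_1\subset\mathcal K_2\subset\cdots$ of $\R^n\setminus\C$ and a corresponding sequence $\lambda_k\downarrow0$, one would like a single $\bar W$. The clean way is to invoke Proposition \ref{W=d}: it suffices to exhibit \emph{one} locally semiconcave, positive definite, proper $\bar W\le W$ verifying the decrease condition with $\partial_P\bar W$ and \emph{some} strictly increasing $\gamma$ and \emph{some} reference function $W_1$ (we may keep the same $W_1$). So one does not literally need a global $\lambda$: one can take $\bar W:=W_{\lambda}$ for a single but sufficiently small $\lambda$ \emph{provided} the error $(LL_f+\p L_l)C\lambda$ can be controlled globally — which it can, because on the level set $\{\bar W\le c\}$ (compact, by properness) all the constants $L,L_f,L_l,C$ are bounded, and $\tilde\gamma$ is bounded below by a positive constant there, so a fixed small $\lambda$ works on each level set and the decrease inequality, being a pointwise statement quantified over $x$, then holds everywhere with $\gamma(s):=\tfrac12\tilde\gamma(s)$ composed appropriately. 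Finally one notes $\bar W\le W$ by construction and invokes Proposition \ref{Wweak}/Proposition \ref{W=d} to rewrite the $\partial_P$ decrease condition in the $D^*$ form of Definition \ref{defMRF}, concluding that $\bar W$ is a $\tfrac{\p}{2}$-MRF. A remark worth inserting is that when $\p=0$ the statement is vacuous content-wise (any such $W$ is already reducible), and the interesting case $\p>0$ is exactly where the factor $2$ is spent.
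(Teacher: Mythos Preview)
Your overall strategy---inf-convolution to gain semiconcavity, then transfer the decrease condition via the Lipschitz bounds of (H1)---is exactly the paper's approach, and your Step~2 is essentially Lemma~\ref{L56}. But the globalization in your Step~3 contains a genuine gap.

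You claim that a \emph{single} small $\lambda$ suffices because on each sublevel set $\{\bar W\le c\}$ the constants $L,L_f,L_l,C$ are bounded and $\tilde\gamma$ is bounded below. This is not enough: the inequality $H(x,\tfrac{\p}{2},p)\le-\tfrac12\tilde\gamma(W_1(x))$ is a \emph{pointwise} statement, and the error term $(LL_f+\p L_l)C\lambda$ must be dominated by $\tfrac12\tilde\gamma(W_1(x))$ at each $x$. But $\tilde\gamma(W_1(x))\to0$ as $x\to\partial\C$, so no fixed $\lambda>0$ can absorb the error near the target. Symmetrically, the local Lipschitz constants $L,L_f,L_l$ may blow up as $|x|\to\infty$ since only \emph{local} Lipschitz continuity is assumed in (H1), so a fixed $\lambda$ also fails far from the target. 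A single inf-convolution $W_\lambda$ therefore does \emph{not} produce a global $\tfrac{\p}{2}$-MRF.

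The paper (following Rifford \cite{R00}) resolves this by taking a \emph{countable family} $W_{\alpha_n}$ with $\alpha_n\to\infty$, each satisfying the decrease condition only on an annulus $W^{-1}([\frac{1}{2n},11n])$, then composing with carefully designed increasing $C^\infty$ maps $\Psi_n$ (Lemma~\ref{L57}) and setting $\bar W:=\min_{n\ge1}\Psi_n\circ W_{\alpha_n}$. The $\Psi_n$ are engineered so that on each annulus the minimum is achieved for some $n_0\le n$, and the minimum of locally semiconcave functions is locally semiconcave. It is precisely this reparametrization step that forces the halving of $\p$: since $\Psi'_n\ge\tfrac12$, for $p\in\partial_P\bar W(z)$ one has $p=\Psi'_{n_0}(\cdot)\,p_{n_0}$ with $p_{n_0}\in\partial_PW_{\alpha_{n_0}}(z)$, and the factor $\Psi'_{n_0}\ge\tfrac12$ applied to the (negative) drift term is what yields $H(z,\tfrac{\p}{2},p)\le\tfrac12 H(z,\p,p_{n_0})$ (Lemma~\ref{L58}). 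Your explanation that the $\tfrac{\p}{2}$ ``creates slack to swallow the Lipschitz error'' is not how the halving actually enters: dropping $\tfrac{\p}{2}l(x,u)\ge0$ makes the left side no larger, but does nothing to shrink the error term. The Lipschitz error is absorbed instead by choosing $\alpha_n$ large enough on each annulus (Lemma~\ref{L56}), at the cost of halving the rate $\mathcal W$, not $\p$.
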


Theorem  \ref{TLip}, whose proof is postponed to  Appendix \ref{A1},   generalizes the result on the existence of a semiconcave Control Lyapunov Function  obtained in  \cite[sect. 5]{R00}  to the present case, where the decrease condition involves also the Lagrangian $l$ and the target is not the origin, but an arbitrary closed set $\C$ with compact boundary.
\vsm
Let us  call a map $W$ as in Theorem \ref{TLip} a {\it Lipschitz continuous $\p$-MRF}.
 As an immediate consequence of Theorems \ref{TLip} and \ref{Tintro1},   we have the following: 
   \begin{Corollary}\label{CLip} Assume {\rm (H1)} and let $\p> 0$. Let $W:\overline{ R^n\setminus\C }\to[0,+\infty)$ be a Lipschitz continuous  $\p$-MRF.   Then there exists a locally bounded feedback $K:\R^n\setminus\C\to U$  that sample and Euler stabilizes  system \eqref{Eintro}   
 to $\C$ with {$(p_0/2,W)$}-regulated cost. 
\end{Corollary}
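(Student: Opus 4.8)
The plan is to combine Theorems \ref{TLip} and \ref{Tintro1} and then transfer the cost bound by monotonicity, so that no estimate is needed beyond those two results. First I would check that (H1) is enough to invoke Theorem \ref{Tintro1}, which is stated under (H0). Since $\partial\C$ is compact, for every $R>0$ the set $B_R(\C)\setminus\C$ is bounded — any $x$ with $0<\d(x)\le R$ realizes its distance to $\C$ at a point of $\partial\C$, hence $|x|$ is bounded by (the radius of a ball containing $\partial\C$) plus $R$ — so $\mathcal K_R:=\overline{B_R(\C)\setminus\C}$ is a compact subset of $\overline{\R^n\setminus\C}$ and the bounds of (H1) on $\mathcal K_R\times U$ give \eqref{boundres}. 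As for the uniform continuity in (H0): the feedback constructed via Proposition \ref{FDBK} constrains the control, on each sublevel set of the relevant Control Lyapunov Function, to the bounded set $U\cap B(0,N(r))$, and on a product of a compact set with such a bounded set the continuity of $f,l$ assumed in (H1) already yields uniform continuity; thus the arguments of Section \ref{SM1} and of the cited results from \cite{LMR16} apply under (H1).

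Next I would invoke the two theorems in turn. By hypothesis $W$ is a Lipschitz continuous $\p$-MRF, i.e. $W$ is locally Lipschitz, positive definite and proper on $\R^n\setminus\T$ and satisfies the decrease condition \eqref{TLipCondition}; hence Theorem \ref{TLip} produces a $\ds\frac{\p}{2}$-MRF $\bar W$ with $\bar W(x)\le W(x)$ for every $x\in\R^n\setminus\T$. Since $\p>0$ we have $\p/2>0$, so Theorem \ref{Tintro1} applies to $\bar W$ and yields a locally bounded feedback $K:\R^n\setminus\C\to U$ that sample and Euler stabilizes \eqref{Egen} — which is the control system \eqref{Eintro} — to $\C$ with $(\p/2,\bar W)$-regulated cost.

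It then remains to upgrade the weight from $\bar W$ to $W$. The map $W$, being a Lipschitz continuous $\p$-MRF, is in particular continuous, positive definite and proper on $\R^n\setminus\C$, hence an admissible weight in Definitions \ref{sstab} and \ref{Estab}; moreover the stability estimates \eqref{betaS} and \eqref{betaeulerdef} involve only the fixed $\beta\in\mathcal{KL}$, not the weight, and are already satisfied by $K$. For the costs, using $\bar W\le W$: every $(r,R)$-stable sampling trajectory-control pair $(x,u)$ from $z$ satisfies $x^0(\bar T_x^r)\le\bar W(z)/(\p/2)\le W(z)/(\p/2)$, while every Euler cost $\x^0$ associated to an Euler solution $\x$ from $z$ satisfies $\lim_{t\to T_\x^-}\x^0(t)\le\bar W(z)/(\p/2)\le W(z)/(\p/2)$. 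Therefore $K$ sample and Euler stabilizes \eqref{Eintro} to $\C$ with $(\p/2,W)$-regulated cost, which is the assertion.

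I expect the only genuine difficulty to be entirely absorbed into Theorem \ref{TLip} (proved in the Appendix), namely the regularization step — in the spirit of \cite{R00} — that manufactures the semiconcave $\ds\frac{\p}{2}$-MRF $\bar W$ dominated by $W$ out of a decrease condition valid only for a Lipschitz $W$; granting that, the corollary is the routine three-step bookkeeping above, the one point worth stating explicitly being the (H1)$\Rightarrow$(H0) reduction of the first paragraph.
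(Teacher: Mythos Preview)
Your proposal is correct and follows essentially the same route as the paper: it explicitly states that Corollary \ref{CLip} is an immediate consequence of Theorems \ref{TLip} and \ref{Tintro1}, noting that the feedback $K$ is a $\bar W$-feedback and that the claim relies on the inequality $\bar W\le W$. Your write-up actually adds value over the paper's one-line remark by spelling out why (H1) is sufficient to run the arguments of Theorem \ref{Tintro1} (stated under (H0)), and by making the monotonicity step from $(\p/2,\bar W)$-regulated to $(\p/2,W)$-regulated cost explicit in both the sampling and the Euler definitions.
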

 Note that, {using the notation of Theorem  \ref{TLip},}   the feedback $K$  {in Corollary \ref{CLip}} is actually a $\bar W$-feedback and the claim above relies on  the inequality $\bar W\le W$.
 
\subsection{Comparison with the original notion of $\p$-MRF}\label{SubRMRF} 
Let us call $\p$-OMRF   the notion of $\p$-MRF originally introduced in \cite{MR13}.  
 \begin{Definition}[$\p$-OMRF]\label{defMRF}
   Let $W:\ol{\R^n\setminus {\T}}\to[0,+\infty)$ be a continuous function, and let us assume that $W$ is  locally semiconcave, positive definite, and  proper on $\R^n\setminus\T$. We say that $W$  is a \emph{$\p$-OMRF  for some $\p\ge0$}  if it verifies
\bel{MRHv} 
H (x,\p, D^*W(x) )<0 \quad \forall x\in {{\R^n}\setminus\T}.
\eeq
\end{Definition}    
A $\p$-MRF   is obviously  a $\p$-OMRF,  but the converse might be false.  By  \cite{MR13} we have the following result.
 
 \begin{Proposition}\label{3.2} {\rm \cite[Prop. 3.1]{MR13}} Assume that $W$ is a $\p$-OMRF with  $\p\ge0$. Then for  every $\sigma>0$  there exists a  continuous, increasing   map $\gamma_\sigma:(0,\sigma]\to (0,+\infty)$ such that   
  \bel{c0}
  H  (x,\p,D^*W(x)) <-\gamma_\sigma(W(x)) \qquad \forall x\in W^{-1}((0,\sigma]).
  \eeq
\end{Proposition}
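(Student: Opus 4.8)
The plan is to promote the merely strict negativity in \eqref{MRHv} to a quantitative, level-dependent bound by a compactness argument on each bounded sublevel set of $W$, and then to pass from the resulting monotone gauge to a continuous, strictly increasing one. Fix $\sigma>0$.

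\emph{Step 1 (a quantitative gauge on level sets).} For $t\in(0,\sigma]$ set
$$
\mathcal G_t:=\{(x,p)\,:\,x\in W^{-1}([t,\sigma]),\ p\in D^*W(x)\},\qquad \Gamma(t):=-\sup_{(x,p)\in\mathcal G_t}H(x,\p,p).
$$
I would first check that this supremum is attained and that $\Gamma(t)>0$. Since $W$ is positive definite and proper on $\R^n\setminus\T$, the set $W^{-1}([t,\sigma])$ is a compact subset of $\R^n\setminus\T$; on it, $D^*W$ is upper semicontinuous with nonempty compact values and is bounded (a $\p$-OMRF is locally semiconcave, hence locally Lipschitz, and its limiting gradients are bounded on compacta, cf. Lemma \ref{Lscv}), so the graph $\mathcal G_t$ is closed and bounded, i.e. compact. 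On the other hand, by \eqref{Ham} the map $(x,p)\mapsto H(x,\p,p)$ is an infimum over $u\in U$ of the functions $(x,p)\mapsto\langle p,f(x,u)\rangle+\p\,l(x,u)$, each continuous in $(x,p)$ by (H0); hence $H(\cdot,\p,\cdot)$ is upper semicontinuous, and it is finite and bounded below on $\mathcal G_t$ because $|f|$ and $l$ are bounded there by (H0). An upper semicontinuous function attains its maximum on a compact set, so the supremum is a maximum, reached at some $(x^*,p^*)\in\mathcal G_t$; as $W(x^*)\ge t>0$, the $\p$-OMRF inequality \eqref{MRHv} gives $H(x^*,\p,p^*)<0$, that is $\Gamma(t)>0$.

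\emph{Step 2 (monotonicity, pointwise estimate, and smoothing).} For $0<t_1\le t_2\le\sigma$ one has $W^{-1}([t_2,\sigma])\subseteq W^{-1}([t_1,\sigma])$, hence $\mathcal G_{t_2}\subseteq\mathcal G_{t_1}$ and $\Gamma(t_1)\le\Gamma(t_2)$: so $\Gamma:(0,\sigma]\to(0,+\infty)$ is non-decreasing. Moreover, for every $x$ with $W(x)=t\in(0,\sigma]$ and every $p\in D^*W(x)$ one has $(x,p)\in\mathcal G_t$, whence $H(x,\p,D^*W(x))\le-\Gamma(W(x))$ on $W^{-1}((0,\sigma])$. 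It remains to minorize $\Gamma$ by a continuous, strictly increasing function: since $\Gamma$ is positive, non-decreasing and bounded above (by $\Gamma(\sigma)$ — note $W^{-1}([t,\sigma])$ can be empty only for $t$ exceeding the maximum of $W$ on the compact set $W^{-1}([0,\sigma])$, a range on which nothing has to be proved and $\gamma_\sigma$ may be prescribed arbitrarily), the running average
$$
\gamma_\sigma(t):=\frac1{3t}\int_0^t\Gamma(s)\,ds+\frac{t}{3\sigma}\,\lim_{s\to0^+}\Gamma(s)
$$
is well defined, continuous and positive on $(0,\sigma]$, satisfies $\gamma_\sigma(t)\le\tfrac23\Gamma(t)<\Gamma(t)$, and is strictly increasing: the first summand is non-decreasing and is strictly increasing unless $\Gamma$ is locally constant near $0$, in which case $\lim_{s\to0^+}\Gamma(s)>0$ and the second summand does the job. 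Combining with the pointwise estimate gives $H(x,\p,D^*W(x))\le-\Gamma(W(x))<-\gamma_\sigma(W(x))$ for all $x\in W^{-1}((0,\sigma])$, which is \eqref{c0}.

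\emph{Main obstacle.} The delicate point is the attainment claim in Step 1: one must combine the compactness of the graph of the (non-convex-valued) reachable-gradient map $D^*W$ over a compact level set with the upper semicontinuity of $H$, and — since $H$ is an infimum over a \emph{possibly unbounded} $U$ — one must make sure that $H>-\infty$ on the relevant set, which is exactly what the growth bound in (H0) provides. Everything else is routine, the only mild care being the bookkeeping in Step 2 that forces strict monotonicity of $\gamma_\sigma$ even when $\Gamma$ has flat pieces.
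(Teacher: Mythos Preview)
Your argument is correct and follows the standard route the paper points to (the proposition is not proved in the paper but cited from \cite{MR13}; the sketch in Appendix \ref{pCLipO}, Step~1, for the Lipschitz case uses exactly the same ingredients --- compactness of sublevel strips, upper semicontinuity and local boundedness of the limiting-gradient map, and upper semicontinuity of $H$ as an infimum of continuous functions --- to extract a uniform negative bound on each $W^{-1}([t,\sigma])$). Your smoothing step is slightly more elaborate than strictly needed (any continuous, strictly increasing minorant of the monotone gauge $\Gamma$ does the job), and the parenthetical about possibly empty $W^{-1}([t,\sigma])$ is harmless but could be avoided by simply working on $(0,\min\{\sigma,\sup W\}]$ and extending $\gamma_\sigma$ afterwards; neither point is a gap.
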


Proposition \ref{3.2} clarifies  the   difference between the two notions:  the existence of a  $\p$-OMRF implies that there exists a  rate function $\gamma_\sigma$, which is  in general,  {\it not global}. In particular, $\gamma_\sigma$ can become smaller and smaller as $\sigma$ tends  to $+\infty$. Consequently, also  the feedback $K$ can be defined only  given  a $\sigma>0$, on $W^{-1}((0,\sigma])$. 

\begin{Remark} {\rm 
	When a $\p$-OMRF $W$ verifies condition \eqref{MRHv}  in the following stronger form
\bel{MRHvs} 
\forall M>0: \quad \sup_{ p\in D^*W(x)} H(x,\p, p)<0 \quad \forall x\in {{\R^n}\setminus\T} \text{ s.t. } \d(x)\ge M \,, 
\eeq
 it is not difficult to prove  that,  under the assumptions of Proposition \ref{3.2},   there exists a continuous, strictly increasing function $\gamma:(0,+\infty)\to(0,+\infty)$  independent of $\sigma$,  such that \eqref{c0}   holds for all $x\in\R^n\setminus\C$  (see \cite[Remark 3.1]{MR13}). 
In other words, 
$$
 \text{{\it a  $\p$-OMRF $W$ verifying \eqref{MRHvs}  is actually  a $\p$-MRF.}}
 $$
 }
\end{Remark}
 
 The proof of  Theorem  \ref{Tintro1} can be easily adapted to derive the following  result.

\begin{Theorem}\label{T1O}
Assume that $f$, $l$ verify hypothesis {\rm (H0)} and let $W$ be a $\p$-OMRF   with $\p>0$. Then for any $\sigma>0$ there exists a locally bounded feedback $K:W^{-1}((0,\sigma])\to U$  that sample and Euler stabilizes  system \eqref{Egen}   
 to $\C$ with $(p_0,W)$-regulated cost for any initial point   $z\in W^{-1}((0,\sigma])$. 
\end{Theorem}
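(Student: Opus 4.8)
The plan is to mimic the proof of Theorem~\ref{Tintro1} step by step, replacing the \emph{global} rate function $\gamma$ used there by the \emph{local} rate function $\gamma_\sigma$ supplied by Proposition~\ref{3.2}, and restricting all constructions to the compact set $W^{-1}((0,\sigma])$. First I would fix $\sigma>0$ and, by Proposition~\ref{3.2}, obtain a continuous increasing $\gamma_\sigma:(0,\sigma]\to(0,+\infty)$ with $H(x,\p,D^*W(x))<-\gamma_\sigma(W(x))$ on $W^{-1}((0,\sigma])$. Then, exactly as in Proposition~\ref{3.3} (whose proof already works sublevel-set by sublevel-set), I would produce a continuous $N:(0,\sigma]\to(0,+\infty)$ so that the truncated Hamiltonian $H_{N(W(x))}$ still satisfies the strict inequality on $W^{-1}((0,\sigma])$; this restricts the feedback selection to a compact control set. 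Defining a $W$-feedback $K:W^{-1}((0,\sigma])\to U$ as in Proposition~\ref{FDBK}, one gets $\langle p(x),f(x,K(x))\rangle+\p\,l(x,K(x))<-\gamma_\sigma(W(x))$ for all $x\in W^{-1}((0,\sigma])$, which is the exact analogue of \eqref{feed1} but valid only on the chosen sublevel set.

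Next I would re-run the sample-stabilizability argument of Subsection~\ref{Subsample}. The key point is that Propositions~\ref{3.5} and~\ref{pinvariant} only ever require the decrease condition on sets of the form $W^{-1}([\hat\mu/4,2\sigma'])$ for $\sigma'$ below the ambient level; so, as long as we only admit initial data $z\in W^{-1}((0,\sigma])$ and radii $0<r<R$ with $\sigma(R)\le\sigma$ (equivalently $B_R(\C)\subseteq W^{-1}((0,\sigma])$, which by Lemma~\ref{LW=d} is the condition $\bar g_{W,\d}(R)\le\sigma$), every sampling trajectory stays inside $W^{-1}((0,\sigma])$ for all time by the invariance Proposition~\ref{pinvariant}, and hence never leaves the region where $K$ and $\gamma_\sigma$ are defined. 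With $\delta(r,R)=\bar\delta(\hat\mu(r),\sigma(R))$ as in \eqref{delta} and $\varepsilon=1$, the same telescoping-sum computation leading to \eqref{estt} gives, for $\bar t:=T_x^{\hat\mu(r)/4}<+\infty$,
\[
W(x(\bar t))-W(z)+\p\int_0^{\bar t}l(x(\tau),u(\tau))\,d\tau\le 0,
\]
hence $\int_0^{\bar t}l\,d\tau\le W(z)/\p$ and $\bar t\ge\bar T_x^r$, which is \eqref{estWc}; the $\mathcal{KL}$-estimate \eqref{betaS} follows verbatim from \eqref{p}. The Euler-stabilizability half then follows word for word from Subsection~\ref{3.2} (Lemmas~\ref{Leps}, \ref{Lrdelta}, \ref{Lrdelta2} and the limiting argument), since those arguments use only the already-established sample stabilizability with $(p_0,W)$-regulated cost and the boundedness of $(f,l)$ on the relevant compact neighborhood of $\C$, never the global decrease condition directly.

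The main obstacle is purely bookkeeping: one must check that for \emph{every} admissible initial datum $z\in W^{-1}((0,\sigma])$ there is a legitimate pair $(r,R)$ with $r<\d(z)\le R$ and $\sigma(R)\le\sigma$ — i.e. that the sublevel set is large enough to contain a genuine neighborhood of the trajectory's initial level — and that the sampling time $\delta$ can be chosen uniformly. This is immediate: given $z\in W^{-1}((0,\sigma])$, pick any $R>0$ with $\bar g_{W,\d}(R)\le\sigma$ and $\d(z)\le R$ (possible since $\bar g_{W,\d}(R)\to 0$ as $R\to 0^+$ and $W(z)\le\sigma$ forces $z\in B_{R_0}(\C)$ for $R_0$ with $\bar g_{W,\d}(R_0)\le\sigma$ by \eqref{Ldis}), then any $0<r<\d(z)$; the invariance of $W^{-1}((0,\sigma])$ under the sampling dynamics guarantees we never need $K$ or $\gamma_\sigma$ outside their domain of definition. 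With this observation in hand, all the estimates of the proof of Theorem~\ref{Tintro1} transfer unchanged, with $\gamma_\sigma$ in place of $\gamma$ and with the phrase "for all $z\in\R^n\setminus\C$" systematically weakened to "for all $z\in W^{-1}((0,\sigma])$" in Definitions~\ref{sstab} and~\ref{Estab}. Hence no genuinely new idea is required beyond the localization already present in Proposition~\ref{3.2}.
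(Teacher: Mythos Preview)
Your proposal is correct and is precisely the adaptation the paper has in mind: it gives no separate proof of Theorem~\ref{T1O}, merely remarking that the proof of Theorem~\ref{Tintro1} ``can be easily adapted''. The only small bookkeeping caveat is that Proposition~\ref{pinvariant} formally invokes the level $2\sigma(R)$ (through $\hat\delta(\hat\mu/4,2\sigma)$, $L$, and $m$), so to keep every object inside the domain of $\gamma_\sigma$ and $K$ you should either apply Proposition~\ref{3.2} with $2\sigma$ in place of $\sigma$ from the outset, or simply require $\sigma(R)\le\sigma/2$ when choosing $R$---either fix is immediate and does not affect the argument.
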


As in the case of $\p$-MRF, when  $f$, $l$ are  locally Lipschitz continuous in $x$, we can replace the semiconcavity assumption in the definition of a $\p$-OMRF with local  Lipschitz continuity. Precisely, we establish what follows.
    \begin{Theorem}\label{CLipO} Assume {\rm (H1)} and let $\p\ge 0$. Let $W:\overline{\R^n\setminus\C}\to[0,+\infty)$ be a locally Lipschitz continuous map on $\overline{(\R^n\setminus\C)}$, such that $W$ is positive definite, and  proper on $\R^n\setminus\T$, and verifies the decrease condition 
   	 \bel{LOMRH} 
   	H (x,\p,  \partial_LW(x))<0 \quad \forall x\in {{\R^n}\setminus\T}     
   	\eeq
   	  Then there exists a $\ds\frac{\p}{2}$-OMRF $\bar W$ which also satisfies $\bar W(x)\leq W(x)$ for all $x\in \R^n\setminus\T$. 
    	 \end{Theorem}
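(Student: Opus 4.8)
Here is how I would attack Theorem~\ref{CLipO}. The idea is to repeat, with two small changes, the semiconcave regularization already carried out for Theorem~\ref{TLip} in Appendix~\ref{A1}. The changes are forced by the fact that \eqref{LOMRH} is a \emph{strict} decrease condition with no \emph{global} rate: on the one hand it still produces a rate function on each sublevel set of $W$; on the other hand we only need the regularized function $\bar W$ to be a $\tfrac{\p}{2}$-OMRF, i.e.\ to satisfy a strict decrease inequality, so these local rates are exactly what the construction needs.

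\textbf{Step 1 (local rate functions).} First I would prove the Lipschitz analogue of Proposition~\ref{3.2}: for every $\sigma>0$ there is a continuous, increasing $\gamma_\sigma:(0,\sigma]\to(0,+\infty)$ with $H\bigl(x,\p,\partial_L W(x)\bigr)\le-\gamma_\sigma\bigl(W(x)\bigr)$ for all $x\in W^{-1}\bigl((0,\sigma]\bigr)$. Indeed, by properness $W^{-1}([a,b])$ is compact whenever $0<a<b\le\sigma$; on it $x\leadsto\partial_L W(x)$ is upper semicontinuous with nonempty compact values and $(x,p)\mapsto H(x,\p,p)$ is upper semicontinuous, being an infimum of continuous functions, so $x\mapsto\max_{p\in\partial_L W(x)}H(x,\p,p)$ is upper semicontinuous on $W^{-1}([a,b])$ and attains there a maximum, which is negative by \eqref{LOMRH}. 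Patching these negative bounds over a sequence $a_k\downarrow 0$ exactly as in \cite[Prop.~3.1]{MR13} yields $\gamma_\sigma$. As in Proposition~\ref{3.3}, one can in addition restrict the infimum defining $H$ to a bounded control set $U\cap B(0,N_\sigma(W(x)))$, which keeps the Lagrangian controlled along the relevant controls on each shell.

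\textbf{Step 2 (scale-by-scale regularization and the decrease condition).} Following the proof of Theorem~\ref{TLip}, I would fix a bi-infinite sequence $\sigma_k$ with $\sigma_k\downarrow 0$ as $k\to-\infty$ and $\sigma_k\uparrow+\infty$ as $k\to+\infty$, and on the compact shells $W^{-1}([\sigma_{k-2},\sigma_{k+2}])$ run a Lasry--Lions inf-convolution of $W$ with a parameter $\lambda_k>0$, gluing the pieces as in Appendix~\ref{A1}. For each $k$, $\lambda_k$ is chosen small relative to the local Lipschitz constant of $W$, the local bounds and Lipschitz constants $M_f,M_l,L_f,L_l$ of $f,l$ from (H1), $\min\gamma_{\sigma_{k+2}}$ on the shell, and the geometry of the gluing. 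Taking $y=x$ in each inf-convolution gives $\bar W\le W$ everywhere, while $\bar W(x)\ge W(x)-\varepsilon_k$ on the $k$-th shell with $\varepsilon_k\to 0$ as $\lambda_k\to 0$ (the relevant minimizers lie within $O(\lambda_k)$ of $x$); imposing also $\varepsilon_k<\tfrac12\sigma_k$ makes $\bar W$ positive definite and proper on $\R^n\setminus\C$, using Lemma~\ref{LW=d} near $\partial\C$. For the decrease condition, each $\bar p\in D^*\bar W(x)$ is assembled, as in Appendix~\ref{A1}, from limiting gradients $q=(x-y)/\lambda_k\in\partial_P W(y)\subseteq\partial_L W(y)$ at points $y$ with $|x-y|=O(\lambda_k)$, plus gluing errors of the same order; applying \eqref{LOMRH} and the local rate at such $y$ yields a control $u$ (in the bounded set of Step~1, so $l(x,u)$ is bounded on the shell) with $\langle q,f(y,u)\rangle+\p\,l(y,u)\le-\tfrac12\gamma_{\sigma_{k+2}}(W(y))$, and transferring from $y$ to $x$ via the Lipschitz continuity of $f,l$ costs only $O(\lambda_k)$. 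The halving $\p\mapsto\p/2$ provides, as in Theorem~\ref{TLip}, the slack to absorb the remaining nonnegative $\tfrac{\p}{2}l(x,u)$ term, so that $\langle\bar p,f(x,u)\rangle+\tfrac{\p}{2}l(x,u)<0$ once $\lambda_k$ is small compared with $\gamma_{\sigma_{k+2}}$; hence $\bar W$ is a $\tfrac{\p}{2}$-OMRF with $\bar W\le W$.

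\textbf{Expected main obstacle.} As in \cite{R00} and Appendix~\ref{A1}, the delicate point is the gluing of the scale-wise inf-convolutions: it must preserve both local semiconcavity and the strict decrease inequality. The second is not automatic, since $p\mapsto H(x,\p,p)$ is concave, so a naive convex combination of the scale-wise decrease inequalities points the wrong way; the argument has to keep track of the proximal-subgradient origin of the reachable gradients of $\bar W$ and of the $O(\lambda_k)$ smallness of all gluing errors, which is where (H1) enters decisively. The only simplification relative to Theorem~\ref{TLip} is that here no global rate is demanded of $\bar W$, so the localized rates $\gamma_\sigma$ of Step~1 are enough to drive the whole construction.
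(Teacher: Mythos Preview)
Your plan is essentially the paper's: Step~1 reproduces Appendix~\ref{pCLipO}, Step~1 almost verbatim (upper semicontinuity of $x\leadsto\partial_L W(x)$ plus compactness of sublevel shells gives, for every $\sigma>0$, a rate $\gamma_\sigma$ on $W^{-1}((0,\sigma])$, then Proposition~\ref{W=d} turns this into a rate $\mathcal W_\sigma=\tilde\gamma_\sigma\circ\d$), and Step~2 reuses the inf-convolution machinery of Appendix~\ref{A1} with scale-dependent rates. Two small corrections relative to the paper's implementation: the paper does \emph{not} use a bi-infinite shell decomposition but takes $\sigma_n:=11n$ for $n\ge1$, defines each inf-convolution $W_{\alpha_n}$ globally on $\R^n$ (with $\alpha_n$ now depending on $m_n:=\min\{\mathcal W_{\sigma_n}(x):x\in W^{-1}([1/(2n),11n])\}$), and glues by the pointwise minimum $\bar W:=\min_{n\ge1}\Psi_n\circ W_{\alpha_n}$ exactly as in Appendix~\ref{A1}. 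The only genuinely new observation (the analogue of Lemma~\ref{L58}) is that if the minimum at $z$ is attained at some $n_0\le n$, then $W(z)\le 11n_0=\sigma_{n_0}$, so $z$ lies in the region where the rate $\mathcal W_{\sigma_{n_0}}$ is available; since $(\mathcal W_{\sigma_n})_n$ is decreasing, one gets $H(z,\p/2,p)\le-\tfrac14\mathcal W_{\sigma_n}(z)<0$. Your ``expected obstacle'' is therefore slightly misdiagnosed: because the gluing is a pointwise minimum, any $p\in\partial_P\bar W(z)$ already belongs to $\partial_P\bar W_{n_0}(z)$ for the active index $n_0$, so no convex combination of decrease inequalities ever enters and the concavity of $p\mapsto H(x,\p,p)$ plays no role.
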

	   The proof of this theorem   is sketched in Appendix \ref{pCLipO}.

	  \vsm
Let us  call a map $W$ as in Theorem {\ref{CLipO}} a {\it Lipschitz continuous $\p$-OMRF}.
 Theorems \ref{CLipO} and  \ref{T1O} imply what follows:
   \begin{Corollary}\label{OLip} Assume {\rm (H1)} and let $\p> 0$. Let $W:\overline{ \R^n\setminus\C }\to[0,+\infty)$ be a Lipschitz continuous  $\p$-OMRF.  Then {for any $\sigma>0$} there exists a locally bounded feedback $K:  W^{-1}((0,\sigma])\to U$  that sample and Euler stabilizes  system \eqref{Egen}   
    	  to $\C$ with {$(p_0/2,W)$}-regulated cost for any initial point   $z\in W^{-1}((0,\sigma])$.  
\end{Corollary}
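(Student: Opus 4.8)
The plan is to deduce Corollary~\ref{OLip} by chaining together Theorem~\ref{CLipO} and Theorem~\ref{T1O}, exactly mirroring the way Corollary~\ref{CLip} follows from Theorems~\ref{TLip} and~\ref{Tintro1}. First I would take a Lipschitz continuous $\p$-OMRF $W$ on $\overline{\R^n\setminus\C}$ with $\p>0$; by definition this means $W$ is continuous, positive definite and proper on $\R^n\setminus\C$, locally Lipschitz on $\overline{\R^n\setminus\C}$, and satisfies the decrease condition $H(x,\p,\partial_LW(x))<0$ for all $x\in\R^n\setminus\C$. Since $\partial_PW(x)\subseteq\partial_LW(x)$ always holds and the relevant inequality is the one stated in~\eqref{LOMRH} with the limiting subdifferential, hypothesis (H1) lets me invoke Theorem~\ref{CLipO}: there exists a $\tfrac{\p}{2}$-OMRF $\bar W$ with $\bar W(x)\le W(x)$ for every $x\in\R^n\setminus\C$. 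Note $\bar W$ is automatically locally semiconcave, positive definite and proper, so it is a $\p_0/2$-OMRF in the sense of Definition~\ref{defMRF}, and since (H1) implies (H0), all the hypotheses needed downstream are in force.

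Next I would fix $\sigma>0$ and apply Theorem~\ref{T1O} to the $\tfrac{\p}{2}$-OMRF $\bar W$. This yields a locally bounded feedback $K:\bar W^{-1}((0,\sigma])\to U$ that sample- and Euler-stabilizes \eqref{Egen} to $\C$ with $(\p/2,\bar W)$-regulated cost for every initial point $z\in\bar W^{-1}((0,\sigma])$; concretely, $K$ is a $\bar W$-feedback in the sense of Proposition~\ref{FDBK} restricted to that sublevel set, and for each such $z$ every stable sampling pair (resp. Euler pair) satisfies the cost estimate $x^0(\bar T_x^r)\le \bar W(z)/(\p/2)$ (resp. $\lim_{t\to T_\x^-}\x^0(t)\le \bar W(z)/(\p/2)$). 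The final step is the comparison: since $\bar W(z)\le W(z)$, one has
\[
\frac{\bar W(z)}{\p/2}\;\le\;\frac{W(z)}{\p/2}\,,
\]
so the same feedback $K$ automatically has $(\p/2,W)$-regulated cost on $\bar W^{-1}((0,\sigma])$. The stabilization estimates \eqref{betaS} and \eqref{betaeulerdef} involve only the distance $\d$ and the $\mathcal{KL}$ function, not $\bar W$, so they are untouched by the replacement of $\bar W$ by $W$ in the cost bound.

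One bookkeeping point deserves care: Theorem~\ref{T1O} produces a feedback on the sublevel set $\bar W^{-1}((0,\sigma])$, whereas Corollary~\ref{OLip} asks for a feedback on $W^{-1}((0,\sigma])$. Because $\bar W\le W$, we have the inclusion $W^{-1}((0,\sigma])\subseteq \bar W^{-1}((0,\sigma])$, so restricting $K$ further to $W^{-1}((0,\sigma])$ is harmless, and every $z\in W^{-1}((0,\sigma])$ lies in $\bar W^{-1}((0,\sigma])$, so the conclusion of Theorem~\ref{T1O} applies verbatim to such $z$. (If instead one prefers the statement literally as phrased, one may simply note that for the given $\sigma$ the relevant $\bar W$-sublevel set contains the $W$-sublevel set, which is all that is used.) I expect no genuine obstacle here — the only thing to watch is the index $\p/2$ versus $\p$ and the direction of the inclusion of sublevel sets; the argument is a routine composition of the two previously established theorems together with the monotonicity inequality $\bar W\le W$, entirely parallel to the derivation of Corollary~\ref{CLip} from Theorems~\ref{TLip} and~\ref{Tintro1}.
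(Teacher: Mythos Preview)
Your argument is correct and follows exactly the route the paper intends: Corollary~\ref{OLip} is stated as an immediate consequence of Theorems~\ref{CLipO} and~\ref{T1O}, combined via the inequality $\bar W\le W$, precisely as you have written. Your handling of the sublevel-set inclusion $W^{-1}((0,\sigma])\subseteq \bar W^{-1}((0,\sigma])$ is the right bookkeeping, and the parallel with Corollary~\ref{CLip} is exactly what the authors have in mind.
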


\section{An example: stabilization of  the non-holonomic integrator control  system with regulated cost}\label{SecEx}
Let us illustrate the preceding theory through a classical example. Precisely, in the first part of this section we provide a $\p$-MRF $W_1$ for the non-holonomic integrator control system associated to a Lagrangian $l$, that verifies a suitable growth condition (see \eqref{lV} below). In view of  Theorem \ref{Tintro1},  this implies the existence of a possibly discontinuous feedback $K$ that sample and Euler stabilizes the non-holonomic integrator to the origin with the cost bounded above by $W_1/\p$. Furthermore, we show  how weakening the  requirements on the $\p$-MRF (by replacing semiconcavity with Lipschitz continuity) may be crucial for the effective construction of a $\p$-MRF.  In particular,  for any bounded Lagrangian $l$ which  cannot satisfy assumption  \eqref{lV} below  for any $C>0$ (as,  for instance,  in the case of the minimum time problem,  where $l\equiv1$),  we   provide a less regular, Lipschitz continuous but not semiconcave   $\p$-MRF $W_2$.  In this case, the sample and Euler stabilizability of the control system with $({\p/2},W_2)$-regulated cost is guaranteed by Corollary \ref{CLip}.  

\vsm
Set $U:=\{u=(u_1,u_2)\in\R^2: \ \ u_1^2+u_2^2\le 1\}$,   $\T:=\{0\}\subset\R^3$ and  consider  the non-holonomic integrator control system:
\begin{equation}\label{enh}
\begin{cases}
\dot x_1= u_1\\
\dot x_2= u_2\\
\ds \dot x_3= x_1u_2-x_2u_1, \qquad u(t)=(u_1,u_2)(t)\in U.  
\end{cases}
\end{equation}
Given a  continuous Lagrangian $l(x,u)\ge0$, let us associate to  \eqref{enh} the cost
\begin{equation}\label{costoes}
\int_0^{T_x}l(x(t),u(t))\,dt 
\end{equation}
(as in the rest of the paper, $T_x$ denotes the exit-time of $x$ from  $\R^3\setminus\{0\}$).  Set $$
 f(x,u):=(u_1,u_2,x_1u_2-x_2u_1)  \qquad\forall (x,u)\in \R^3\times U.
 $$ 
The following map $W_1$,  introduced in   \cite{MRS04}, given by
  $$W_1(x):=\left(\sqrt{x_1^2+x_2^2}-|x_3|\right)^2+x_3^2 \qquad\forall x\in\R^3,$$
   is proper, positive definite,  locally semiconcave in $\R^3\setminus\{0\}$,  and verifies
$$\min_{u\in U} \langle p, f(x,u)\rangle=-\sqrt{V(x)}\qquad \forall x\in\R^3\setminus\{0\},   \  \forall  p\in D^*  W_1 (x),$$
where 
$$V(x):=\left(\sqrt{x_1^2+x_2^2}-|x_3|\right)^2+\left(\sqrt{x_1^2+x_2^2}-2|x_3|\right)^2(x_1^2+x_2^2) \quad\forall x\in\R^3.$$
Therefore,  $W_1$ is   a Control Lyapunov Function   for  the control system \eqref{enh} and, consequently, any $W_1$-feedback sample and Euler stabilizes \eqref{enh} to the origin \cite{R02}.
When the Lagrangian $l$ satisfies, for some positive constant $C$, 
\begin{equation}\label{lV}
0\leq l(x,u)\leq C \sqrt{V(x)}\quad\forall(x,u)\in (\R^3\setminus\{0\})\times U,
\end{equation}
then $W_1$ is also a $\p$-MRF  for every $\p\in(0,1/C)$. Indeed, for all $x\in \R^3\setminus \{0\}$ and  for all   $p\in D^*W_1(x)$, one has 
$$
\begin{array}{l}
\ds H(x,p_0,p)=\min_{u\in U}\{ \langle p, f(x,u)\rangle + \p\,l(x,u)\}\leq \\
\ds \qquad\qquad\qquad \min_{u\in U}\{ \langle p, f(x,u)\rangle \}+ p_0 C \sqrt{V(x)}=-(1-p_0 C)\sqrt{V(x)}.
 \end{array}
$$
However,   the Control Lyapunov Function $W_1$ cannot be a $\p$-MRF when  
\begin{equation}\label{lVlim}
\lim_{x\to0} \frac{\inf_{u\in U} l(x,u)}{\sqrt{V(x)}}=+\infty.
\end{equation}
 Since $V(x)$ tends to $0^+$ as $x\to0$,   condition \eqref{lVlim} does not hold,  for instance, for the minimum time problem, where $l\equiv 1$.

  For any bounded Lagrangian $l$, let $M_l>0$ verify $l(x,u)\le M_l$ for all $(x,u)\in (\R^n\setminus\C)\times U$. Then a discontinuous feedback that  sample and Euler stabilizes \eqref{enh} and at the meantime provides strategies for which the target is reached  with  regulated cost, can be obtained if we consider the  following Control Lyapunov Function $W_2$, introduced in \cite{Rthesis}:
$$ W_2(x):=\max\left\{\sqrt{x_1^2+x_2^2}, |x_3|-\sqrt{x_1^2+x_2^2}\right\} \qquad\forall x\in\R^3.$$
The map  $W_2$ is locally semiconcave only outside the set $S:=\{(x_1,x_2,x_3)\in \R^3\mid x_3^2=4(x_1^2+x_2^2)\}$, therefore,  it is not a $\p$-MRF. However, $W_2$ matches the weaker definition of \emph{Lipschitz continuous} $\p$-MRF for $\p<1/M_l$: it is indeed a locally Lipschitz continuous map in $\R^3$, which is positive definite and proper   in $\R^3\setminus \{0\}$,  and a direct computation  shows that 
$$H(x,p_0,p)\leq \min_{u\in U} \langle f
(x,u), p \rangle+ \p M_l<0 \ \ \forall x\in\R^3\setminus \{0\}, \ \forall p\in \partial_P W(x)$$
(see also \cite{R02} and \cite{MRS04}). 
Since   the data $f$ and $l$   verify assumption (H1),  it follows by Corollary \ref{CLip} that \eqref{enh}-\eqref{costoes} is sample and Euler stabilizable  with $(\l/2,W_2)$-regulated cost as soon as the Lagrangian $l$ is bounded.

\section{Conclusions}
In this paper we addressed sample and Euler stabilizability of nonlinear control system in an optimal control theoretic framework. We introduced the notion of sample and Euler trajectories with regulated cost, which conjugate stabilizability with an upper bound on the payoff,  depending on the initial state. Under mild regularity hypotheses on the vector field $f$ and on the Lagrangian $l$ and for a closed, possibly unbounded control set, we proved that the existence of a special Control Lyapunov Function $W$, called a $\p$-Minimum Restraint Function, $\p$-MRF,  implies that all sample and Euler  stabilizing trajectories have $(p_0,W)$-regulated costs. The proof is constructive:  it is based  indeed on the synthesis of appropriate feedbacks derived from $W$. As in the case of classical Control Lyapunov Functions, this construction requires that $W$ is locally semiconcave. However, by  generalizing an earlier result by Rifford \cite{R00} we established that it is possible to trade regularity assumptions on  $f$ and  $l$ with milder regularity assumptions on  $W$. In particular, we showed that if the vector field $f$ and the Lagrangian $l$ are locally Lipschitz   up to the boundary of the target,   then the existence of a mere locally Lipschitz $\p$-MRF $W$  provides  sample and Euler stabilizability with $(p_0/2,W)$-regulated cost.  

The present work is part of an ongoing, wider investigation of global asymptotic controllability and stabilizability in an optimal control perspective. A slightly weaker notion of $p_0$-MRF --called here $p_0$-OMRF-- was introduced in \cite{MR13} and further extended in \cite{LMR16} to more general optimization problems, in order to yield  global asymptotic controllability  with regulated cost. This paper represents the stability-oriented counterpart of  \cite{MR13}. In a forthcoming paper we will address the question of stabilizability with regulated cost for possibly non-coercive   optimization problems with unbounded  controls. Other interesting research directions include  the relation between $p_0$-MRFs and input-to-state stability (in the fashion of \cite{MRS04}) and the study of a possible inverse Lyapunov theorem for $p_0$-MRFs -- i.e., whether  the results in \cite{Sonn83}  can be extended to $p_0$-MRFs by showing that their existence is also a necessary condition for the global asymptotic controllability  of the control system with  regulated cost.      
 
\appendix
 \section{ }
\subsection{Proof of Theorem \ref{TLip}}\label{A1}
Let $W:\overline{\R^n\setminus\C}\to[0,+\infty)$ be a locally Lipschitz continuous map, positive definite and proper on  $\R^n\setminus\C$, and verifying the decrease condition \eqref{TLipCondition}, namely, such that 
\bel{LMRHA} 
H (x,\p,  \partial_PW(x))\le-\tilde\gamma(W_1(x)) \quad \forall x\in {{\R^n}\setminus\T}      
\eeq
for some strictly increasing, continuous  map $\tilde\gamma:(0,+\infty)\to(0,+\infty)$ and some continuous function $W_1:\overline{\R^n\setminus\C}\to[0,+\infty)$,  $W_1$  positive definite and proper on  $\R^n\setminus\C$.  Our goal is to show that there exists a  $\frac{\p}{2}$-MRF \, $\bar W$,  such that $\bar W\le W$. 
\vsm
The proof  is a careful adaptation of the arguments  in \cite[sect. 5]{R00}.  For this reason we explicitly prove  just the steps involving the decrease condition, where some changes are needed because of the presence of the Lagrangian $l$. 
\vsm 
Preliminarily, let us recall the notion of \emph{inf-convolution}  for  a  locally Lipschitz continuous   nonnegative  map  $g:\R^N\to\R$  and collect some useful properties (see e.g. \cite[Theorem 3.5.3, Lemma 3.5.7]{CS}, \cite[Section 1.5, Thm. 5.1]{CLSW}, \cite[Section II.4, Lemmas 4.11, 4.12]{BCD}).  
\begin{Lemma}\label{InfC} For any $\alpha>0$,  define
	$$
	g_\alpha(x):=\inf_{y\in\R^N}\left\{g(y)+\alpha|y-x|^2\right\} \qquad \forall x\in\R^N.
	$$
	Then $g_\alpha$ is locally semiconcave in $\R^N$ and 
	\begin{itemize}
		\item[{\rm (i)}]  for all $x\in\R^N$, there exists $\bar y\in\R^N$ such that  $g_\alpha(x)=g(\bar y)+\alpha|\bar y-x|^2$ (the above infimum is actually a minimum);
		\item[{\rm (ii)}]  for all $x\in\R^N$, $0\le g_\alpha(x)\le g(x)$,  moreover $g_\alpha\nearrow g$ locally uniformly as $\alpha\to0$;  
		\item[{\rm (iii)}] for all $x\in\R^N$ such that  $\partial_P g_\alpha(x)$ is nonempty, $\bar y$ is unique and the proximal subgradient $\partial_P g_\alpha(x)$  is equal to  the singleton $\{2\alpha(x-\bar y)\}$, moreover, \newline $2\alpha(x-\bar y)\in\partial_P g(\bar y)$;
		\item[{\rm (iv)}] if $\Psi:\R\to\R$ is an increasing, locally semiconcave function, then $\Psi\circ g_\alpha$ is locally semiconcave;
		\item[{\rm (v)}] if $g,h:\R^N\to\R$ are semiconcave on $\Omega\subset\R^N$, then the function $\min\{g,h\}$ is semiconcave on $\Omega$.
	\end{itemize} 
\end{Lemma}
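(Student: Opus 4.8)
The plan is to derive every item from the single algebraic observation obtained by expanding the penalty $\alpha|y-x|^2$. Writing
$$
g_\alpha(x)=\alpha|x|^2+\inf_{y\in\R^N}\Big\{g(y)+\alpha|y|^2-2\alpha\langle x,y\rangle\Big\},
$$
I note that the displayed infimum is a pointwise infimum of functions affine in $x$, hence concave in $x$; therefore $x\mapsto g_\alpha(x)-\alpha|x|^2$ is concave and $g_\alpha$ is semiconcave on $\R^N$ with semiconcavity constant $\alpha$, which already yields the opening assertion of the lemma (global, hence local, semiconcavity). Finiteness and the two-sided bound of (ii) are immediate: $g_\alpha(x)\ge0$ because $g\ge0$, while testing $y=x$ gives $g_\alpha(x)\le g(x)$.

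For (i) I would fix $x$ and observe that $y\mapsto g(y)+\alpha|y-x|^2$ is continuous and coercive: since $g\ge0$, it exceeds $g(x)\ge g_\alpha(x)$ outside the compact sublevel set $\{y:\alpha|y-x|^2\le g(x)\}$, so the infimum is attained at some $\bar y$ lying in that set. Item (ii) then follows: the bound is settled above, monotonicity of $\alpha\mapsto g_\alpha(x)$ is read directly off the definition, and pointwise convergence comes from the minimizer estimate $\alpha|\bar y-x|^2\le g(x)$, which forces the penalty term to vanish and $\bar y\to x$ in the limit, whence $g(\bar y)\le g_\alpha(x)\le g(x)$ squeezes $g_\alpha(x)$ to $g(x)$ by continuity of $g$. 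Since the convergence is monotone, the $g_\alpha$ are continuous, and the limit $g$ is continuous, Dini's theorem upgrades this to locally uniform convergence.

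The core of the argument, and the step I expect to be the main obstacle, is (iii), because it requires reconciling the semiconcavity of $g_\alpha$ with the \emph{assumed} nonemptiness of its proximal subdifferential. Assuming $\partial_Pg_\alpha(x)\ne\emptyset$, I would first produce a complementary superdifferential estimate straight from the definition: for a minimizer $\bar y$ at $x$ and any $x'$, testing $y=\bar y$ in the defining infimum for $g_\alpha(x')$ and expanding the square gives
$$
g_\alpha(x')-g_\alpha(x)\le 2\alpha\langle x-\bar y,\,x'-x\rangle+\alpha|x'-x|^2,
$$
so $2\alpha(x-\bar y)\in\partial^Pg_\alpha(x)$. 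Because a function with both a nonempty proximal subdifferential and a nonempty proximal superdifferential at a point is differentiable there, with both sets collapsing to $\{\nabla g_\alpha(x)\}$, I conclude $\partial_Pg_\alpha(x)=\{2\alpha(x-\bar y)\}$; uniqueness of the gradient then forces $\bar y=x-\tfrac{1}{2\alpha}\nabla g_\alpha(x)$, so the minimizer is unique. The transfer $2\alpha(x-\bar y)\in\partial_Pg(\bar y)$ is finally read off from minimality: substituting $y=\bar y+h$ into $g(y)+\alpha|y-x|^2\ge g(\bar y)+\alpha|\bar y-x|^2$ and expanding yields $g(\bar y+h)-g(\bar y)\ge 2\alpha\langle x-\bar y,h\rangle-\alpha|h|^2$, which is exactly the proximal subgradient inequality with constant $\alpha$.

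The remaining items are instances of the standard semiconcavity calculus. For (v) I would pick a common semiconcavity constant $\rho$ for $g$ and $h$ on a given convex compact subset of $\Omega$, so that both $g-\rho|\cdot|^2$ and $h-\rho|\cdot|^2$ are concave there; then $\min\{g,h\}-\rho|\cdot|^2=\min\{g-\rho|\cdot|^2,\,h-\rho|\cdot|^2\}$ is a minimum of concave functions, hence concave, giving semiconcavity of $\min\{g,h\}$ with constant $\rho$. For (iv), the composition of the (locally) semiconcave function $g_\alpha$ established above with an increasing, locally semiconcave $\Psi$ is locally semiconcave by the usual chain estimate for semiconcave functions, which I would invoke from \cite{CS}; the monotonicity of $\Psi$ is precisely what lets the inner semiconcavity inequality for $g_\alpha$ pass through $\Psi$ without reversing sign.
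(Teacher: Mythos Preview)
Your proof is correct and self-contained. The paper does not give its own proof of this lemma: it states the properties as standard and refers the reader to \cite[Theorem~3.5.3, Lemma~3.5.7]{CS}, \cite[Section~1.5, Thm.~5.1]{CLSW}, and \cite[Section~II.4, Lemmas~4.11, 4.12]{BCD}. So there is no argument in the paper to compare yours against; you have simply supplied the proofs that the references contain, via the standard route (the concavity of $g_\alpha-\alpha|\cdot|^2$, coercivity for existence of minimizers, the super/subdifferential sandwich for (iii), and the usual semiconcavity calculus for (iv)--(v)).

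One remark on (ii): your squeeze argument, based on $|\bar y-x|^2\le g(x)/\alpha$, proves $g_\alpha\nearrow g$ as $\alpha\to+\infty$, which is the correct direction of convergence for the inf-convolution as written. The ``$\alpha\to 0$'' in the lemma statement is evidently a typo in the paper; you are right not to attempt to prove it as stated, but it would be worth flagging the discrepancy explicitly.
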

\vsm
{\sc Step 1.}  As it is not restrictive  in view of Proposition \ref{W=d}, let us assume  that  $W_1\equiv \d$.   We extend $\tilde\gamma$ continuously to $\R$, by setting $\tilde\gamma(t)=\tilde\gamma(0):=\lim_{s\to 0^+}\tilde\gamma(s)$ for every $t<0$.  Without loss of generality, we can suppose $\tilde\gamma$  $1$-Lipschitz continuous in $\R$. Otherwise, we can replace $\tilde \gamma$ in  the decrease condition \eqref{LMRHA}  with $\bar\gamma(t):=\inf_{s\in\R} \{\tilde \gamma(s)+|t-s|\}$ for every $t\in\R$. Indeed, it is not difficult  to see that $\bar\gamma\le\tilde\gamma$, and   $\bar\gamma$ is strictly increasing and  $1$-Lipschitz continuous. 
Therefore, the map
$$
\mathcal{W}:=\bar\gamma\circ W_1=\bar\gamma\circ \d
$$
 is $1$-Lipschitz continuous and positive definite on $\R^n\setminus\C$. As a consequence of these results, \eqref{LMRHA}  implies that  $W$ verifies  
\bel{TMRH} 
H (x,\p, \partial_PW(x) )\le-\mathcal{W}(x) \qquad \forall x\in {{\R^n}\setminus\T}.      
\eeq 
\vsm
{\sc Step 2.}  For any integer $n\ge1$, let us  set 
$$
\begin{array}{c}
M_n:=\max\{W(x): \  x\in  B_1(W^{-1}([0,11n]))\}, \\ [1.5ex]
m_n:=\min\left\{\mathcal W(x): \  x\in  W^{-1}\left(\left[\frac{1}{2n},11n\right]\right)\right\}. 
\end{array}
$$
By the Lipschitz properties of $f$, $l$ and $W$, let us denote $L_f^n$, $L_l^n$, $L_W^n\ge1$ the Lipschitz constants of $f(\cdot,u)$, $l(\cdot,u)$ and $W$, respectively,  on the sublevel set $W^{-1}([0,M_n])$. Finally, let us set
\bel{aln}
\alpha_n:=\max\left\{8n(L^n_W)^2+1\,,\, \frac{2L^n_W(1+L^n_WL^n_f+\p L^n_l)}{m_n}+1\,,\, 11n\right\}.
\eeq
Let us extend $W$ to $\R^n$ by setting $W(x)=0$ for all $x$ in the interior of $\C$. For every $\alpha_n$, we define by inf-convolution the locally semiconcave  function $W_{\alpha_n}:\R^n\to[0,+\infty)$ as follows:
\bel{Wal}
W_{\alpha_n}(x):=\inf_{y\in\R^n}\left\{W(y)+\alpha_n|y-x|^2\right\} \qquad \forall x\in\R^n.
\eeq
\begin{Lemma}\label{L55}{\rm (\cite[Lemma 5.5]{R00})} Let $z\in W^{-1}([0,M_n])$. If the infimum in the definition of $W_{\alpha_n}(z)$ is attained at $\bar y$, then one has that $\bar y\in W^{-1}([0,M_n])$ and  \linebreak $|\bar y-z|\le\min\left\{\frac{1}{8nL_W^n}\,,\, \frac{m_n}{2(1+L^n_WL^n_f+\p L^n_l)} \right\} $;  moreover
	$$
	W(z)-\frac{1}{8n}\le W_{\alpha_n}(z)\le W(z).
	$$
\end{Lemma}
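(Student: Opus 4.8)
The plan is to treat Lemma~\ref{L55} as Rifford does in \cite[Lemma 5.5]{R00}: it is a purely metric statement about the inf-convolution $W_{\alpha_n}$ of the $0$-extension of $W$ to $\R^n$, and the only change from his argument is that the Lagrangian constant $L_l^n$ now appears in the choice \eqref{aln} of $\alpha_n$, entering exactly the way $L_f^n$ does. In particular the decrease condition \eqref{TMRH} is not used here.

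First I would dispose of the easy half, $W_{\alpha_n}(z)\le W(z)$: it follows by taking $y=z$ in \eqref{Wal} (and is also Lemma~\ref{InfC}(ii)). Next, let $\bar y$ be as in the statement, i.e.\ a point realizing the infimum defining $W_{\alpha_n}(z)$ (one exists by Lemma~\ref{InfC}(i)). Then $W(\bar y)+\alpha_n|\bar y-z|^2=W_{\alpha_n}(z)\le W(z)$, and since $W\ge 0$ this yields simultaneously $W(\bar y)\le W(z)\le M_n$, i.e.\ $\bar y\in W^{-1}([0,M_n])$ — the first assertion — and the basic inequality $\alpha_n|\bar y-z|^2\le W(z)-W(\bar y)$.

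For the distance bound I would use that $z$ and $\bar y$ both lie in $W^{-1}([0,M_n])$, on which $W$ is $L_W^n$-Lipschitz, so $W(z)-W(\bar y)\le L_W^n|\bar y-z|$. Inserting this into the basic inequality and cancelling one factor $|\bar y-z|$ (the case $\bar y=z$ being trivial) gives $|\bar y-z|\le L_W^n/\alpha_n$. Then the two bounds $\alpha_n\ge 8n(L_W^n)^2$ and $\alpha_n\ge 2L_W^n(1+L_W^nL_f^n+\p L_l^n)/m_n$ built into \eqref{aln} turn $L_W^n/\alpha_n$ into $\tfrac1{8nL_W^n}$ and into $\tfrac{m_n}{2(1+L_W^nL_f^n+\p L_l^n)}$ respectively, which is the claimed $|\bar y-z|\le\min\{\ldots\}$. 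Finally, for the lower bound one subtracts and estimates $W(z)-W_{\alpha_n}(z)=(W(z)-W(\bar y))-\alpha_n|\bar y-z|^2\le L_W^n|\bar y-z|-\alpha_n|\bar y-z|^2\le L_W^n|\bar y-z|\le L_W^n\cdot\tfrac1{8nL_W^n}=\tfrac1{8n}$, using $\alpha_n|\bar y-z|^2\ge 0$ and the distance estimate just obtained; hence $W_{\alpha_n}(z)\ge W(z)-\tfrac1{8n}$.

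I do not foresee a genuine obstacle: this is an elementary inf-convolution computation. The one point that deserves care is to check $\bar y\in W^{-1}([0,M_n])$ \emph{before} invoking the Lipschitz inequality, so that the constant $L_W^n$ legitimately controls $|W(z)-W(\bar y)|$; note also that the enlargement $B_1(\cdot)$ in the definition of $M_n$ and the term $\alpha_n\ge 11n$ are not needed for this lemma and will only be used later, in the proof of Theorem~\ref{TLip}, when $W_{\alpha_n}$ is differentiated and the dynamics $f$ and the Lagrangian $l$ are estimated along the associated minimizers.
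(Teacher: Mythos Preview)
Your proposal is correct and is precisely the standard inf-convolution computation of \cite[Lemma~5.5]{R00} that the paper cites without reproducing; the only modification here --- the presence of $L_l^n$ in the definition \eqref{aln} of $\alpha_n$ --- enters exactly as you say and plays no role in this lemma. Your remark that one must secure $\bar y\in W^{-1}([0,M_n])$ \emph{before} invoking the $L_W^n$-Lipschitz bound is the correct order of operations, and your closing observation about the $B_1(\cdot)$-enlargement and the term $\alpha_n\ge 11n$ being irrelevant for this particular lemma is also accurate.
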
 
\begin{Lemma}\label{L56} Let $z\in  W^{-1}\left(\left[\frac{1}{2n},11n\right]\right)$ and $p\in\partial_PW_{\alpha_n}(z)$. Then
	\bel{LMR56} 
	H (z,\p, p )\le-\frac{\mathcal{W}(z)}{2}     
	\eeq 
\end{Lemma}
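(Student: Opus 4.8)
The plan is to transfer the decrease condition \eqref{TMRH}, which holds for proximal subgradients of $W$, to proximal subgradients of the inf-convolution $W_{\alpha_n}$ by means of the structure theorem for inf-convolutions, Lemma~\ref{InfC}(iii), and then control the errors introduced by replacing the contact point $\bar y$ with $z$ using the Lipschitz bounds collected in Lemma~\ref{L55}. Concretely, fix $z\in W^{-1}([1/(2n),11n])$ and $p\in\partial_PW_{\alpha_n}(z)$; by Lemma~\ref{InfC}(iii) the infimum in \eqref{Wal} is attained at a \emph{unique} $\bar y$, we have $p=2\alpha_n(z-\bar y)$, and crucially $p\in\partial_PW(\bar y)$. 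Since $z\in W^{-1}([0,M_n])$, Lemma~\ref{L55} gives $\bar y\in W^{-1}([0,M_n])$ together with the two key smallness estimates $|\bar y-z|\le 1/(8nL_W^n)$ and $|\bar y-z|\le m_n/(2(1+L_W^nL_f^n+\p L_l^n))$. In particular both $z$ and $\bar y$ lie in the sublevel set $W^{-1}([0,M_n])$ where $f(\cdot,u)$, $l(\cdot,u)$ and $W$ are Lipschitz with constants $L_f^n$, $L_l^n$, $L_W^n$.

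Next I would write, for the minimizing (or nearly minimizing) control,
\[
\langle p,f(z,u)\rangle+\p\,l(z,u)
=\langle p,f(\bar y,u)\rangle+\p\,l(\bar y,u)
+\langle p,f(z,u)-f(\bar y,u)\rangle+\p\,(l(z,u)-l(\bar y,u)),
\]
take the infimum over $u\in U$ on the left to get $H(z,\p,p)$, and bound the correction terms: $|\langle p,f(z,u)-f(\bar y,u)\rangle|\le |p|\,L_f^n|z-\bar y|$ and $\p|l(z,u)-l(\bar y,u)|\le \p L_l^n|z-\bar y|$, both uniformly in $u$. Since $|p|=2\alpha_n|z-\bar y|$, the estimate $|z-\bar y|\le 1/(8nL_W^n)$ combined with $\alpha_n\ge 8n(L_W^n)^2$ is designed precisely so that... actually the cleaner route is: one first needs to know $|p|$ is bounded. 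From $p\in\partial_PW(\bar y)$ and the Lipschitz continuity of $W$ on $W^{-1}([0,M_n])$ one gets $|p|\le L_W^n$. Therefore $|z-\bar y|=|p|/(2\alpha_n)\le L_W^n/(2\alpha_n)$, and the error terms are bounded by $|p|(L_f^n+\p L_l^n/\dots)$... more simply by $(1+L_W^nL_f^n+\p L_l^n)|z-\bar y|\le (1+L_W^nL_f^n+\p L_l^n)\cdot m_n/(2(1+L_W^nL_f^n+\p L_l^n))=m_n/2$. So the total perturbation is at most $m_n/2$, while by \eqref{TMRH} applied at $\bar y$ (note $\bar y\notin\C$ here, which uses $W(z)\ge 1/(2n)>0$ and the smallness of $|z-\bar y|$) we have $H(\bar y,\p,p)\le-\mathcal W(\bar y)$.

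To close, I would estimate $\mathcal W(\bar y)\ge \mathcal W(z)-|z-\bar y|$ by $1$-Lipschitz continuity of $\mathcal W$, and note that on the relevant range $\mathcal W(z)\ge m_n$ (since $z\in W^{-1}([1/(2n),11n])$), so that
\[
H(z,\p,p)\le H(\bar y,\p,p)+\frac{m_n}{2}\le-\mathcal W(\bar y)+\frac{m_n}{2}\le-\mathcal W(z)+|z-\bar y|+\frac{m_n}{2}\le-\mathcal W(z)+\frac{m_n}{2}+\frac{m_n}{2}\cdot\frac{1}{\dots},
\]
and since $m_n\le\mathcal W(z)$ and $|z-\bar y|$ is comparably small, one arrives at $H(z,\p,p)\le-\mathcal W(z)/2$. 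The arithmetic has to be bookkept carefully so that the losses $|z-\bar y|$ and $m_n/2$ together consume no more than half of $\mathcal W(z)$; this is exactly what the third entry $11n$ and the two ratio-type terms in the definition \eqref{aln} of $\alpha_n$ are calibrated for. I expect the main obstacle to be precisely this calibration — ensuring $\bar y$ stays in the good sublevel set, stays outside $\C$ so that \eqref{TMRH} is applicable at $\bar y$, and that all the $n$-dependent constants line up so the two error contributions are each $\le\mathcal W(z)/4$ (or whatever split makes the final $-\mathcal W(z)/2$ come out); the infimum-vs-minimum subtlety in passing from $H(z,\p,\cdot)$ to the per-control identity, uniform in $u\in U$ over a possibly unbounded control set, also needs the bound $|f|,l\le M_f,M_l$ from (H1) to be invoked when taking infima.
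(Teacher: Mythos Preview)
Your approach is essentially the paper's: transfer $p$ to $\partial_PW(\bar y)$ via Lemma~\ref{InfC}(iii), apply \eqref{TMRH} at $\bar y$, use $|p|\le L_W^n$ (from $p\in\partial_PW(\bar y)$ and the Lipschitz bound on $W$), and control the replacement errors by the Lipschitz constants together with the smallness of $|z-\bar y|$ from Lemma~\ref{L55}.

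The only slip is in the final bookkeeping. You first bound the $f,l$-perturbation by the \emph{enlarged} quantity $(1+L_W^nL_f^n+\p L_l^n)|z-\bar y|\le m_n/2$, and then \emph{separately} add $|z-\bar y|$ for the shift $-\mathcal W(\bar y)\le-\mathcal W(z)+|z-\bar y|$; this double-counts the ``$1$'' and leaves you with $-\mathcal W(z)+|z-\bar y|+m_n/2$, which does not close since one only knows $\mathcal W(z)\ge m_n$. The fix is simply to combine the two contributions rather than estimate them in sequence: the $f,l$-perturbation alone contributes $(|p|L_f^n+\p L_l^n)|z-\bar y|\le(L_W^nL_f^n+\p L_l^n)|z-\bar y|$, the $1$-Lipschitz shift of $\mathcal W$ contributes the remaining $|z-\bar y|$, and their \emph{sum} is exactly
\[
(1+L_W^nL_f^n+\p L_l^n)\,|z-\bar y|\ \le\ \frac{m_n}{2}\ \le\ \frac{\mathcal W(z)}{2}.
\]
That is the calibration you were looking for; no splitting into quarters is needed, and the infimum-vs-supremum manipulation $\inf_u\{A(u)\}\le\inf_u\{B(u)\}+\sup_u\{A(u)-B(u)\}$ goes through using only the $u$-uniform Lipschitz bounds from (H1), without invoking $M_f,M_l$.
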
 
\begin{proof} Arguing similarly to the proof of \cite[Lemma 5.6]{R00},  by Lemmas \ref{L55} and \ref{InfC}, the infimum in the definition of $W_{\alpha_n}(z)$ is attained at a point $\bar y\in W^{-1}\left(\left[0,11n\right]\right)$,  
	verifying $|\bar y-z|\le \frac{m_n}{2(1+L^n_WL^n_f+\p L^n_l)}$ and such that  $p\in \partial_PW (\bar y)$. Therefore,   by the Lipschitz properties of $f$, $l$, $W$ and the $1$-Lipschitz continuity of $\mathcal{W}$ established in Step 1, we get
	$$
	\begin{array}{l}
	H (z,\p, p )=\inf_{u\in U}\left\{\langle p,f(z,u)\rangle+\p l(z,u)\right\}\le  \inf_{u\in U}\left\{\langle p,f(\bar y,u)\rangle+\p l(\bar y,u)\right\}    \\ [1.5ex]
	\qquad\qquad\qquad +\sup_{u\in U}\left(|p||f(z,u)-f(\bar y,u)|+\p |l(z,u)-l(\bar y,u)|\right)  \\ [1.5ex]
	\qquad\qquad\qquad\qquad \le -\mathcal{W}(\bar y)+L_W^nL_f^n|z-\bar y|+\p L_l^n|z-\bar y| \quad (\text{using  \eqref{TMRH}})  \\ [1.5ex]
	\qquad\qquad\qquad\qquad \le -\mathcal{W}(z)+(1+L_W^nL_f^n+\p L_l^n)|z-\bar y|   \\ [1.5ex]
	\qquad\qquad\qquad\qquad\ds \le -\mathcal{W}(z)+\frac{m_n}{2}\le -\frac{\mathcal{W}(z)}{2}.
	\end{array} 
	$$
\end{proof}

\vsm
{\sc Step 3.} Starting from  $(W_{\alpha_n})_{n\ge1}$, let us  construct a locally semiconcave $\ds\frac{\p}{2}$-MRF. 
\begin{Lemma}\label{L57}{\rm (\cite[Lemma 5.7]{R00})}  For each $n\ge1$, there exists an increasing, $C^\infty$, increasing map $\Psi_n:[0,+\infty)\to :0,+\infty)$ verifying the following properties.
	\begin{itemize}
		\item[{\rm (i)}] $\Psi_n(t)=t+\frac{1}{8n}$ for any $t\in \left[0,\frac{1}{2n}\right]$,
		\item[{\rm (ii)}] $\Psi_n(t)=t $ for any $t\in \left[\frac{1}{n}-\frac{1}{8n},10n\right]$,
		\item[{\rm (iii)}] $\Psi_n(t)\ge 11n+\max\{W(x): \ W_{\alpha_n}(x)\le t\}$ for any $t\in \Big[11n-\frac{1}{8n},+\infty\Big)$,
		\item[{\rm (iv)}] $\dot\Psi_n(t)\ge \frac{1}{2}$ for any $t\ge0$.
	\end{itemize}
\end{Lemma}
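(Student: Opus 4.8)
The plan is to build $\Psi_n$ by hand: prescribe it as the two line segments required by (i) and (ii) on their respective intervals, and then fill the two remaining gaps with a routine $C^\infty$, strictly increasing interpolation chosen so that the slope never drops below $\tfrac12$ and so that, on the terminal ray, it overtakes the nondecreasing function appearing in (iii). Since (iv) only imposes a \emph{lower} bound on $\dot\Psi_n$, there is essentially unlimited room to climb, and the only compatibility one has to watch is on the \emph{first} gap, where both the left and the right piece already have slope $1$.

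First I would record that the function $\varphi_n(t):=11n+\max\{W(x):\ W_{\alpha_n}(x)\le t\}$ is well defined, finite and nondecreasing for $t\ge0$. Indeed, by Lemma \ref{InfC}(i) the infimum defining $W_{\alpha_n}(x)$ in \eqref{Wal} is attained at some $\bar y$ with $W(\bar y)\le W_{\alpha_n}(x)$ and $\alpha_n|\bar y-x|^2\le W_{\alpha_n}(x)$; hence $\{W_{\alpha_n}\le t\}$ lies in the $\sqrt{t/\alpha_n}$-neighbourhood of the compact set $W^{-1}([0,t])$, so it is bounded, and the continuity of $W$ makes the displayed maximum finite, while monotonicity is clear since the sublevel sets increase with $t$. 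I would then fix once and for all a $C^1$ nondecreasing majorant $\tilde\varphi_n\ge\varphi_n$ on $[0,+\infty)$ (it exists because $\varphi_n$ is nondecreasing and locally bounded), so that (iii) becomes the single requirement $\Psi_n\ge\tilde\varphi_n$ on $[11n-\tfrac{1}{8n},+\infty)$.

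Now the construction. Set $\Psi_n(t):=t+\tfrac{1}{8n}$ on $[0,\tfrac{1}{2n}]$ and $\Psi_n(t):=t$ on $[\tfrac{1}{n}-\tfrac{1}{8n},10n]=[\tfrac{7}{8n},10n]$; on these intervals $\Psi_n$ is affine with slope $1\ge\tfrac12$, so (i), (ii) and (iv) hold there. Across the first gap $[\tfrac{4}{8n},\tfrac{7}{8n}]$ the value must rise from $\tfrac{5}{8n}$ to $\tfrac{7}{8n}$, i.e. by $\tfrac{2}{8n}$ over a length $\tfrac{3}{8n}$, so the required mean slope is $\tfrac23\in(\tfrac12,1)$; hence there is a $C^\infty$ strictly increasing bridge there — e.g. $\Psi_n(t)=t+(1-\chi(t))\tfrac{1}{8n}$ with $\chi$ a smooth step rising from $0$ to $1$, $\chi'$ flat to infinite order at the endpoints and $0\le\chi'\le 4n$ — which matches the two affine pieces to infinite order and has derivative in $[\tfrac12,1]$. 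On $[10n,+\infty)$ I would keep $\Psi_n(t)=t$ on a right neighbourhood of $10n$ (to glue $C^\infty$), then let $\Psi_n$ rise on $[10n,11n-\tfrac{1}{8n}]$ with derivative always $\ge\tfrac12$ but otherwise as large as needed, until $\Psi_n(11n-\tfrac{1}{8n})\ge\tilde\varphi_n(11n-\tfrac{1}{8n})$, and finally take $\dot\Psi_n\ge\max\{\tfrac12,\ \dot{\tilde\varphi}_n\}$ on $[11n-\tfrac{1}{8n},+\infty)$, which keeps $\Psi_n-\tilde\varphi_n\ge0$ and thus secures (iii). Smoothness at all junctions is arranged by the standard patching of $C^\infty$ pieces, and $\Psi_n$ is strictly increasing on all of $[0,+\infty)$ by construction, so (iv) is automatic.

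The only genuine obstacle is bookkeeping: one must check that the mandatory value increments across the transition intervals are compatible with $\dot\Psi_n\ge\tfrac12$ — this is the content of the mean-slope $\tfrac23$ computation for the first gap, and is automatic for the terminal gap since the constraint there is one-sided with unbounded room — and that the affine, bridge and terminal pieces glue to infinite order, which is routine. The single place where the data of the paper enter is the finiteness of $\varphi_n$, which is exactly what the properness of $W$ (equivalently, of $W_{\alpha_n}$) provides and what makes (iii) attainable; the whole argument is a direct adaptation of \cite[Lemma 5.7]{R00}.
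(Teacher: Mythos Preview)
The paper does not give its own proof of this lemma: it is stated with the citation ``\cite[Lemma 5.7]{R00}'' and used without further justification, so there is nothing to compare at the level of argument. Your construction is a correct explicit realisation of what that citation stands for: prescribing the two affine pieces, bridging the first gap $[\tfrac{4}{8n},\tfrac{7}{8n}]$ by a smooth step (your mean-slope computation $\tfrac23\in[\tfrac12,1]$ is exactly the compatibility check needed for (iv)), and exploiting the absence of any upper slope bound on $[10n,+\infty)$ to dominate $\varphi_n$; the finiteness of $\varphi_n$ via properness of $W$ is also argued correctly. The only cosmetic point is that you want your majorant $\tilde\varphi_n$ to be $C^\infty$, not merely $C^1$, if you are going to read off $\dot\Psi_n$ from $\dot{\tilde\varphi}_n$ on the terminal ray and still claim $\Psi_n\in C^\infty$; this is a trivial upgrade.
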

The function  $\bar W_n:=\Psi_n\circ W_{\alpha_n}$ is locally semiconcave on $\R^n$ by  Lemma \ref{InfC}, (iv). The required locally semiconcave $\ds\frac{\p}{2}$-MRF $\bar W$, is given by
$$
\bar W(x):=\min_{n\ge 1}\bar W_n(x) \qquad \forall x\in\R^n\setminus\C.
$$
Precisely, one has what follows.
\begin{Lemma}\label{L58}  For all integer $n\ge 1$ and for all  $z\in  W^{-1}\left(\left[\frac{1}{n},10n\right]\right)$, one has  $\bar W(z)=\min_{1\le k\le n}\bar W_k(z)$.
	Furthermore, if $p\in  \partial_P\bar W (z)$, then  
	\bel{LMR58} 
	H \left(z,\frac{\p}{2},p \right)\le-\frac{\mathcal{W}(z)}{4}.      
	\eeq 
\end{Lemma}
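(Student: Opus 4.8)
The plan is to prove the two claims in turn, adapting the argument of \cite[Lemma 5.8]{R00} while carrying the extra term $\p l$ through all the estimates. For the identity $\bar W(z)=\min_{1\le k\le n}\bar W_k(z)$, I would show that for every $k\ge n$ the cut-off $\Psi_k$ acts as the identity at the relevant value. Indeed, since $W(z)\le 10n\le M_k$, Lemma \ref{L55} gives $W(z)-\tfrac1{8k}\le W_{\alpha_k}(z)\le W(z)$, so $W_{\alpha_k}(z)\in\big[\tfrac1k-\tfrac1{8k},10k\big]$ (using $\tfrac1n\ge\tfrac1k$), and property (ii) of Lemma \ref{L57} yields $\bar W_k(z)=W_{\alpha_k}(z)$; the same computation gives $\bar W_n(z)=W_{\alpha_n}(z)$. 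Since $\alpha\mapsto g_\alpha$ is nondecreasing and $k\mapsto\alpha_k$ is nondecreasing (read off from \eqref{aln}), the map $k\mapsto W_{\alpha_k}(z)$ is nondecreasing on $\{k\ge n\}$, whence $\bar W_k(z)=W_{\alpha_k}(z)\ge W_{\alpha_n}(z)=\bar W_n(z)\ge\min_{1\le j\le n}\bar W_j(z)$ for all $k\ge n$; therefore $\bar W(z)=\min_{k\ge1}\bar W_k(z)=\min_{1\le k\le n}\bar W_k(z)$.

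For the decrease estimate, fix $p\in\partial_P\bar W(z)$ and pick $k_0\in\{1,\dots,n\}$ with $\bar W(z)=\bar W_{k_0}(z)$. The crucial step is to arrange that $z\in W^{-1}\big(\big[\tfrac1{2k_0},11k_0\big]\big)$, and I would argue by always comparing with $\bar W_n(z)=W_{\alpha_n}(z)\le W(z)$. If $W(z)>M_{k_0}$, then $z$ lies at Euclidean distance $>1$ from $W^{-1}([0,11k_0])$ and, using $\alpha_{k_0}\ge 11k_0$ from \eqref{aln}, $W_{\alpha_{k_0}}(z)\ge 11k_0$; if $W(z)\le M_{k_0}$ but $W(z)\ge 11k_0$, then Lemma \ref{L55} gives $W_{\alpha_{k_0}}(z)\ge 11k_0-\tfrac1{8k_0}$. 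In either case property (iii) of Lemma \ref{L57} forces $\bar W_{k_0}(z)\ge 11k_0+W(z)>\bar W_n(z)$, contradicting minimality; so necessarily $W(z)\le M_{k_0}$ and $W(z)<11k_0$. If moreover $W(z)<\tfrac1{2k_0}$, then $W_{\alpha_{k_0}}(z)\le W(z)<\tfrac1{2k_0}$, so property (i) of Lemma \ref{L57} together with Lemma \ref{L55} gives $\bar W_{k_0}(z)=W_{\alpha_{k_0}}(z)+\tfrac1{8k_0}\ge W(z)\ge\bar W_n(z)$, hence $\bar W_n(z)=\bar W(z)$ as well, and I may simply take $k_0:=n$, for which $z\in W^{-1}([\tfrac1n,10n])\subset W^{-1}([\tfrac1{2n},11n])$. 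Thus in all cases $z\in W^{-1}([\tfrac1{2k_0},11k_0])$.

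Now the conclusion is short. Since $\bar W\le\bar W_{k_0}$ with equality at $z$, the chosen $p$ lies in $\partial_P\bar W_{k_0}(z)=\partial_P(\Psi_{k_0}\circ W_{\alpha_{k_0}})(z)$; and since $\Psi_{k_0}$ is $C^\infty$ with $\dot\Psi_{k_0}\ge\tfrac12>0$, the proximal chain rule gives $p=\lambda q$ with $\lambda:=\dot\Psi_{k_0}(W_{\alpha_{k_0}}(z))\ge\tfrac12$ and $q\in\partial_P W_{\alpha_{k_0}}(z)$. Applying Lemma \ref{L56} with $k_0$ in place of $n$ (legitimate by the previous paragraph) yields $H(z,\p,q)\le-\tfrac12\mathcal{W}(z)$, and then, using $l\ge0$ and $\tfrac{\p}{2\lambda}\le\p$,
\[
H\!\left(z,\frac{\p}{2},p\right)=\lambda\inf_{u\in U}\Big\{\langle q,f(z,u)\rangle+\frac{\p}{2\lambda}\,l(z,u)\Big\}\le\lambda\,H(z,\p,q)\le-\frac{\lambda}{2}\,\mathcal{W}(z)\le-\frac{1}{4}\,\mathcal{W}(z),
\]
the last inequality because $\lambda\ge\tfrac12$ and $\mathcal{W}(z)>0$.

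I expect the real work to be the second paragraph above: certifying, purely from the minimality of $\bar W_{k_0}(z)$ and the three regimes of the cut-offs $\Psi_k$, that the chosen index places $z$ in the window $W^{-1}([\tfrac1{2k_0},11k_0])$ on which Lemmas \ref{L55} and \ref{L56} are available. The first and third paragraphs, by contrast, are essentially bookkeeping with the monotonicity of the inf-convolution, the chain rule, and the scaling identity for $H$.
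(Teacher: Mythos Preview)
Your proposal is correct and follows essentially the same route as the paper's proof, which largely defers to \cite[Lemma~5.8]{R00} for the identity $\bar W(z)=\min_{1\le k\le n}\bar W_k(z)$ and for the bound $W(z)\le 11k_0$; you instead supply these details directly via the monotonicity of $k\mapsto\alpha_k$ and the case analysis on the regimes of $\Psi_{k_0}$, which is precisely the content of Rifford's argument. The one genuine (minor) difference is in the final Hamiltonian estimate: the paper first observes that $H(z,\p,q)<0$ forces $\inf_u\langle q,f(z,u)\rangle<0$ and then uses the sign of $\langle q,f(z,u^*)\rangle$ together with $\dot\Psi_{k_0}\ge\tfrac12$ to extract the factor $\tfrac12$, arriving at $H(z,\tfrac{\p}{2},p)\le\tfrac12 H(z,\p,q)$; your factor-out-$\lambda$ argument with $\tfrac{\p}{2\lambda}\le\p$ reaches the same conclusion more directly and without isolating a particular control. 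Both are valid and yield the same bound $-\tfrac{1}{4}\mathcal W(z)$.
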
 
\begin{proof}  For all $z\in  W^{-1}\left(\left[\frac{1}{n},10n\right]\right)$, the proof of the following facts:
	\begin{itemize}
		\item[(i)]     $\bar W(z)=\bar W_{n_0}(z):=\min_{1\le k\le n}\bar W_k(z)$ and
		\bel{subd}
		p\in\partial_P\bar W(z) \ \Longrightarrow \ p\in \partial_P\bar W_{n_0}(z)=\Psi'_{n_0}(W_{\alpha_{n_0}}(z))\partial_PW_{\alpha_{n_0}}(z));
		\eeq 
		\item[(ii)] $W(z)\le 11 n_0$,   
	\end{itemize}
	can be derived easily by \cite[Lemma 5.8]{R00}, hence we omit it.    If $W(z)<\frac{1}{2n_0}$, then
	$$
	W_{n_0}(z)\le W(z)<\frac{1}{2n_0} \ \Longrightarrow \ \bar W_{n_0}(z)= W_{\alpha_{n_0}}(z)+\frac{1}{8n_0} \ge W(z).
	$$
	Since $ \bar W_{n}(z)= W_{\alpha_n}(z)\le  W(z)$, this yields that the minimum is also reached for $n$.
	Thus Lemma \ref{L57} (i) and Lemma \ref{L56} imply the decrease condition \eqref{LMR58}  for any  $p\in  \partial_P\bar W (z)$.  
	It  remains   to show that  \eqref{LMR58} holds for any $p\in  \partial_P\bar W (z)$ also when  $z\in  W^{-1}\left(\left[\frac{1}{2n_0},11n_0\right]\right)$. 
	By Lemma  \ref{L56},   
	$$
	H\left(z,\p,p \right)= \inf_{u\in U}\left\{\langle p,f(z,u)\rangle+\p\, l(z,u)\right\}  \le  -\frac{\mathcal{W}(z)}{2}  \quad \forall p\in \partial_PW_{\alpha_{n_0}}(z)
	$$
	and, as a consequence (since $\p\ge0$ and $l\ge0$),   
	$$
	\inf_{u\in U}\left\{\langle p,f(z,u)\rangle\right\}<0 \quad \forall p\in \partial_PW_{\alpha_{n_0}}(z).
	$$
	Thus by Lemma \ref{L57} (iv) and \eqref{subd}, for any  $p\in  \partial_P\bar W (z)$ there is some $p_{n_0}\in  \partial_PW_{\alpha_{n_0}}(z)$ such that 
	$p=\Psi'_{n_0}(W_{\alpha_{n_0}}(z))\, p_{n_0}$ and
	$$
	\begin{array}{l}
	H \left(z,\frac{\p}{2},p \right)= \inf_{u\in U}\left\{\langle p,f(z,u)\rangle+\frac{\p}{2} l(z,u)\right\} \\ [1.5ex]
	\quad\qquad\qquad  =\inf_{u\in U}\left\{\Psi'_{n_0}(W_{\alpha_{n_0}}(z))\langle p_{n_0},f(z,u)\rangle+\frac{\p}{2} l(z,u)\right\}  \\ [1.5ex]
	\ds \quad\qquad\qquad \le \frac{1}{2}\, H \left(z,\p,p \right)\le -\frac{\mathcal{W}(z)}{4}.
	\end{array}
	$$
	
\end{proof}

This last lemma shows that the minimum in the definition of $\bar W(x)$ is always attained for $x\in\R^n\setminus\C$. Therefore, the function $\bar W$ is locally semiconcave  outside the target  (by Lemma \ref{InfC}, (v)).  On the other hand, $\bar W$ is continuous on $\overline{\R^n\setminus\C}$ because $0\le \bar W\le W$ and satisfies the decrease condition by \eqref{LMR58}, where, by Step 1, $ \frac{\mathcal{W}}{4}$ coincides with the composition of the positive, Lipschitz continuous,  and strictly increasing function  $\frac{\bar\gamma}{4}$ with the distance $\d$. Consequently, by Proposition \ref{Wweak} we can conclude that  $\bar W$ provides a $\ds\frac{\p}{2}$-MRF, which proves Theorem \ref{TLip}.
\qed
\subsection{Proof of Theorem \ref{CLipO}}\label{pCLipO}
Let us sketch how to adapt the arguments of the proof of Theorem \ref{TLip} to the case of a Lipschitz continuous $p_0$-OMRF, for which only the existence of a \emph{local} rate function $\gamma_\sigma$ is ensured. 

Let $W:\overline{R^n\setminus\C}\to[0,+\infty)$ be a locally Lipschitz continuous map, positive definite  and  proper on $\R^n\setminus\T$ and verifying
$$
H (x,\p,  \partial_LW(x))<0 \quad \forall x\in {{\R^n}\setminus\T}.     
$$
Our aim is  to prove the existence of  a $\ds\frac{\p}{2}$-OMRF $\bar W\le W$.
\vsm
{\sc Step 1.} By the Lipschitz continuity of $W$,  the set-valued map  $x\leadsto \partial_LW(x)$  has closed graph  with compact values, so that it  is  upper semicontinuous (see  \cite[Props. 4.3.3, 4.3.5]{Vinter} and \cite[Thm.1 and Cor. 1, pg. 41]{AC}).  At this point, one can derive that for any $\sigma>0$ there exists a positive, continuous, strictly increasing map $\gamma_\sigma$ defined in $(0,\sigma]$, such that  $W$ verifies
\bel{Oc0}
H  (x,\p,D_LW(x)) <-\gamma_\sigma(W(x)) \qquad \forall x\in W^{-1}((0,\sigma]),
\eeq
arguing exactly  as in  \cite[Prop. 3.1]{MR13}.   
If $\gamma_\sigma$ denotes  an arbitrary continuous and strictly increasing extension of $\gamma_\sigma$ to $(0,+\infty)$, by  Proposition \ref{W=d} it follows that for any continuous function $W_1 :\ol{\R^n\setminus \T}\to[0,+\infty)$, positive definite  and  proper on $\R^n\setminus\T$,  there exists a  continuous, strictly increasing function $\tilde\gamma_\sigma:(0,+\infty)\to(0,+\infty)$ such that $W$ also verifies
\bel{OPMRH} 
H (x,\p,  \partial_LW(x) )\le-\tilde\gamma_\sigma(W_1(x)) \qquad \forall x\in W^{-1}((0,\sigma])      
\eeq 
(and vice-versa, if $W$, $W_1$ satisfy \eqref{OPMRH},   then  \eqref{Oc0} holds true for some $\gamma_\sigma$ as above). Let us choose  $W_1=\mathbf d$. Similarly to Step 1 of the proof of Theorem  \ref{TLip},  we can suppose without loss of generality that  $\tilde\gamma_\sigma$ is   1-Lipschitz continuous and consider the map $\mathcal W_\sigma:=\tilde\gamma_\sigma \circ \mathbf d$, which is   $1$-Lipschitz continuous and positive definite  in $\R^n\setminus\T$.
Therefore, recalling that $ \partial_PW(x)\subseteq  \partial_LW(x)$ for every $x$,  $W$ verifies
\bel{local}
H(x,p_0,\partial_PW(x))\leq -\mathcal W_\sigma(x)\qquad \forall x\in W^{-1}((0,\sigma]).
\eeq
\vsm
{\sc Step 2.} For any integer  $n\ge1$, let us  set  $\sigma_n:= 11n$. Hence $W$ verifies
\bel{localmn}
H(x,p_0,\partial_PW(x))\leq -\mathcal W_{\sigma_n}(x)\qquad \forall x\in W^{-1}((0,\sigma_n]),
\eeq
where it is easy to see that $(\mathcal W_{\sigma_n})_n$ is a decreasing sequence.   From now on, the proof proceeds similarly to Appendix \ref{A1}, with the crucial  differences  that the decrease rate $\mathcal W_{\sigma_n}$  in  \eqref{localmn} depends  on $\sigma_n$ and that the condition \eqref{localmn} is satisfied only in $W^{-1}((0,\sigma_n])$.  In particular, these facts imply that,  for any $n\ge1$, the inf-convolution $W_{\alpha_n}$ of $W$ depends  on $\mathcal W_{\sigma_n}$, since   $\alpha_n$ is given by
$$
\alpha_n:=\max\left\{8n(L^n_W)^2+1\,,\, \frac{2L^n_W(1+L^n_WL^n_f+\p L^n_l)}{m_n}+1\,,\, 11n\right\},
$$
where all the constants are the same as  in the proof of Theorem \ref{TLip}  and, in particular,    
$$ 
m_n =\min \left\{\mathcal W_{\sigma_n}(x):\quad x\in W^{-1}\left(\left[\frac{1}{2n},11n \right]\right)\right\}.
$$
Lemmas  \ref{L55}, \ref{L57}, dealing with the properties of  the approximations of $W$,  hold unchanged, while  Lemmas \ref{L56}, \ref{L58}  are now replaced by the following results.
\begin{Lemma}\label{L56O} Let $z\in  W^{-1}\left(\left[\frac{1}{2n},\sigma_n\right]\right)$ and $p\in\partial_PW_{\alpha_n}(z)$. Then
	\bel{LMR56} 
	H (z,\p, p )\le-\frac{\mathcal{W}_{\sigma_n}(z)}{2}.    
	\eeq 
\end{Lemma}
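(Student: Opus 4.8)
The plan is to mimic the proof of Lemma~\ref{L56} essentially verbatim, keeping track of the two features peculiar to the OMRF setting: the decrease rate $\mathcal{W}_{\sigma_n}$ now carries the index $\sigma_n$ (a purely notational matter), and the decrease condition \eqref{localmn} at our disposal holds only on the sublevel set $W^{-1}((0,\sigma_n])$ rather than on all of $\R^n\setminus\C$ (the point that genuinely needs care). First I would use Lemma~\ref{InfC}(i),(iii) together with Lemma~\ref{L55} to locate the unique minimizer $\bar y$ in the definition of $W_{\alpha_n}(z)$, to record that $p\in\partial_PW_{\alpha_n}(z)$ forces $p\in\partial_PW(\bar y)$, and to retain the estimate $|\bar y-z|\le\min\{(8nL^n_W)^{-1},\,m_n/(2(1+L^n_WL^n_f+\p L^n_l))\}$; since $W$ is $L^n_W$-Lipschitz on $W^{-1}([0,M_n])$, this also yields $|p|\le L^n_W$.

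The crucial new step is to verify that $\bar y\in W^{-1}((0,\sigma_n])$, so that \eqref{localmn} may legitimately be invoked at $\bar y$. For the upper bound one has $W(\bar y)\le W(\bar y)+\alpha_n|\bar y-z|^2=W_{\alpha_n}(z)\le W(z)\le\sigma_n$; for the strict lower bound I would combine $L^n_W\ge1$ with the distance estimate to get $W(\bar y)\ge W(z)-L^n_W|\bar y-z|\ge\tfrac{1}{2n}-\tfrac{1}{8n}>0$ --- this is exactly where the hypothesis $W(z)\ge\tfrac{1}{2n}$ enters. Granting this, \eqref{localmn} gives $H(\bar y,\p,p)\le-\mathcal{W}_{\sigma_n}(\bar y)$.

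It then remains to transfer this inequality back to $z$, which is done precisely as in Lemma~\ref{L56}: from $H(z,\p,p)\le H(\bar y,\p,p)+\sup_{u\in U}\big(|p|\,|f(z,u)-f(\bar y,u)|+\p\,|l(z,u)-l(\bar y,u)|\big)$ one applies the Lipschitz bounds on $f$ and $l$, the bound $|p|\le L^n_W$, and the $1$-Lipschitz continuity of $\mathcal{W}_{\sigma_n}$ (established as in Step~1 of Appendix~\ref{A1}) to reach $H(z,\p,p)\le-\mathcal{W}_{\sigma_n}(z)+(1+L^n_WL^n_f+\p L^n_l)|z-\bar y|\le-\mathcal{W}_{\sigma_n}(z)+\tfrac{m_n}{2}$, and finally uses $\mathcal{W}_{\sigma_n}(z)\ge m_n$ --- valid because $z\in W^{-1}([\tfrac{1}{2n},\sigma_n])$, by the very definition of $m_n$ --- to conclude $H(z,\p,p)\le-\tfrac{\mathcal{W}_{\sigma_n}(z)}{2}$. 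I do not anticipate any serious obstacle: every quantitative estimate is identical to the semiconcave case, and the only delicate point, the localization of \eqref{localmn} to $W^{-1}((0,\sigma_n])$, is disposed of by the observation in the previous paragraph that the minimizer $\bar y$ never leaves that region.
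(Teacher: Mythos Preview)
Your argument is correct and follows essentially the same route as the paper's proof: identify the minimizer $\bar y$, check that it lies in $W^{-1}((0,\sigma_n])$ so that \eqref{localmn} applies there, and then transfer the estimate from $\bar y$ to $z$ exactly as in Lemma~\ref{L56}. You in fact supply a couple of details the paper leaves implicit --- the lower bound $W(\bar y)>0$ via the Lipschitz estimate (the paper only writes the upper bound $W(\bar y)\le\sigma_n$ while asserting $\bar y\in W^{-1}((0,\sigma_n])$) and the explicit bound $|p|\le L^n_W$ --- but the strategy is identical.
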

\begin{proof} The only delicate point in order to adapt  the proof of Lemma  \ref{L56} to the present setting,    is  that, given $z\in W^{-1}((0,\sigma_n])$, one has to apply the decrease condition in \eqref{localmn} not at $z$, but at the point $\bar y$ where the minimum in definition \eqref{Wal}  of  $W_{\alpha_n}(x)$ is obtained.  This can be done since $\bar y$ belongs to the sublevel set  $W^{-1}((0,\sigma_n])$ too; indeed,
	$$
	W(\bar y)= W_{\alpha_n}(z)-\alpha_n|\bar y-z|^2\le W_{\alpha_n}(z)\le W(z)\le\sigma_n.
	$$  
\end{proof}

\begin{Lemma}\label{L58O}  For all integer $n\ge 1$ and for all  $z\in  W^{-1}\left(\left[\frac{1}{n},\sigma_{n}-n\right]\right)$, one has  $\bar W(z)=
	\bar W_{n_0}(z)$ for some $n_0\in\{1,\dots, n\}$.
	Furthermore, if $p\in  \partial_P\bar W (z)$, then  
	\bel{LMR58local} 
	H \left(z,\frac{\p}{2},p \right)\le-\frac{\mathcal{W}_{\sigma_{n}}(z)}{4}.      
	\eeq 
\end{Lemma}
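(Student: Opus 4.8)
The plan is to mirror closely the proof of Lemma~\ref{L58} from Appendix~\ref{A1}, the only new feature being that the decrease rate $\mathcal{W}_{\sigma_n}$ now depends on the index $n$ and that Lemma~\ref{L56O} is available only on the sublevel set $W^{-1}([\frac{1}{2n},\sigma_n])$. First I would record the purely structural facts, which involve only the (unchanged) cut-off functions $\Psi_n$ and the family $(W_{\alpha_n})$ and therefore follow verbatim from \cite[Lemma~5.8]{R00}, exactly as in Lemma~\ref{L58}: for every $z\in W^{-1}([\frac{1}{n},\sigma_n-n])$ there is an index $n_0\in\{1,\dots,n\}$ with $\bar W(z)=\bar W_{n_0}(z)=\min_{1\le k\le n}\bar W_k(z)$, together with the inclusion $\partial_P\bar W(z)\subseteq\Psi'_{n_0}(W_{\alpha_{n_0}}(z))\,\partial_P W_{\alpha_{n_0}}(z)$, and moreover $W(z)\le 11n_0=\sigma_{n_0}$. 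This already settles the first assertion of the lemma, so it remains to prove the decrease inequality \eqref{LMR58local}.

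To this end I would single out an auxiliary index $k\le n$ at which Lemma~\ref{L56O} may legitimately be invoked. If $W(z)\ge\frac{1}{2n_0}$ I take $k:=n_0$: then $z\in W^{-1}([\frac{1}{2n_0},\sigma_{n_0}])$ directly, by fact (ii). If $W(z)<\frac{1}{2n_0}$ I take $k:=n$: arguing as in Lemma~\ref{L58} (via Lemmas~\ref{L55} and~\ref{L57}(i)--(ii)) one checks that the minimum defining $\bar W(z)$ is then also attained at $n$, whence $\partial_P\bar W(z)\subseteq\partial_P\bar W_n(z)$, while $\frac{1}{n}\le W(z)\le\sigma_n-n$ gives $z\in W^{-1}([\frac{1}{2n},\sigma_n])$. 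In both cases $z$ lies in the region where Lemma~\ref{L56O} holds for the index $k$, and by the chain rule for proximal subgradients used in \eqref{subd} every $p\in\partial_P\bar W(z)$ can be written $p=\lambda q$ with $q\in\partial_P W_{\alpha_k}(z)$ and $\lambda:=\Psi'_k(W_{\alpha_k}(z))\ge\frac{1}{2}$ (Lemma~\ref{L57}(iv)).

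Now Lemma~\ref{L56O} gives $H(z,\p,q)\le-\frac{\mathcal{W}_{\sigma_k}(z)}{2}$; fixing $\varepsilon>0$ and choosing $\bar u\in U$ with $\langle q,f(z,\bar u)\rangle+\p\,l(z,\bar u)<-\frac{\mathcal{W}_{\sigma_k}(z)}{2}+\varepsilon$, the nonnegativity of $\p\,l$ forces $\langle q,f(z,\bar u)\rangle<0$ for every $\varepsilon$ small enough, so that $\lambda\langle q,f(z,\bar u)\rangle\le\tfrac12\langle q,f(z,\bar u)\rangle$ and
\[
H\Big(z,\tfrac{\p}{2},p\Big)\le\lambda\langle q,f(z,\bar u)\rangle+\tfrac{\p}{2}\,l(z,\bar u)\le\tfrac12\big(\langle q,f(z,\bar u)\rangle+\p\,l(z,\bar u)\big)<-\tfrac{\mathcal{W}_{\sigma_k}(z)}{4}+\tfrac{\varepsilon}{2}.
\]
Letting $\varepsilon\to0$ yields $H(z,\frac{\p}{2},p)\le-\frac{\mathcal{W}_{\sigma_k}(z)}{4}$, and since $(\mathcal{W}_{\sigma_j})_j$ is decreasing and $k\le n$ we have $\mathcal{W}_{\sigma_k}(z)\ge\mathcal{W}_{\sigma_n}(z)$, which is \eqref{LMR58local}.

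The step requiring the most care -- indeed the only genuine departure from the semiconcave case of Lemma~\ref{L58} -- is precisely this bookkeeping of indices: Lemma~\ref{L56O} delivers the rate $\mathcal{W}_{\sigma_k}$ attached to whichever index $k$ it is applied with, and only on $W^{-1}([\frac{1}{2k},\sigma_k])$, so one must verify that the index actually used is $\le n$, that $z$ never leaves the admissible sublevel set (which is exactly what fact (ii) ensures in the case $k=n_0$), and that passing down from $\mathcal{W}_{\sigma_k}$ to $\mathcal{W}_{\sigma_n}$ by monotonicity of $n\mapsto\mathcal{W}_{\sigma_n}$ is allowed. Everything else -- the inf-convolution estimates of Lemma~\ref{L55}, the construction and properties of the $\Psi_n$ in Lemma~\ref{L57}, and the reduction to the $1$-Lipschitz rates $\mathcal{W}_{\sigma_n}=\tilde\gamma_{\sigma_n}\circ\d$ performed in Step~1 -- goes through without change.
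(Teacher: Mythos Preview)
Your proof is correct and follows essentially the same approach as the paper's: the structural facts (i)--(ii) are imported from \cite[Lemma~5.8]{R00} as in Lemma~\ref{L58}, the two cases $W(z)\ge\frac{1}{2n_0}$ (apply Lemma~\ref{L56O} with index $n_0$) and $W(z)<\frac{1}{2n_0}$ (the minimum is also attained at $n$, so apply Lemma~\ref{L56O} with index $n$) are treated identically, and the final passage from $\mathcal{W}_{\sigma_k}$ to $\mathcal{W}_{\sigma_n}$ via monotonicity is exactly the paper's argument. Your $\varepsilon$-argument in the final estimate is simply a more explicit version of the inequality $H(z,\tfrac{\p}{2},p)\le\tfrac12 H(z,\p,q)$ that the paper records directly.
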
 
\begin{proof} Going through  the proof of Lemma  \ref{L58}, the  crucial remark is that,  whenever the minimum 
	$$\bar W(z)=\bar W_{n_0}(z):=\min_{1\le k\le n}\bar W_n(z)$$ is obtained for some $n_0<n$, then $W(z)\le \sigma_{n_0}$. The last inequality implies that, when $W(z)\ge \frac{1}{2n_0}$, the point $z$ belongs  to the strip $W^{-1}\left(\left[\frac{1}{2n_0},\sigma_{n_0}\right]\right)$. Therefore,  arguing as  in the proof of Lemma  \ref{L58},  one can apply Lemma \ref{L56O} to derive that
	$$ 
	H \left(z,\frac{\p}{2},p \right)\le-\frac{\mathcal{W}_{\sigma_{n_0}}(z)}{4}.      
	$$
	Recalling that the sequence $(\mathcal{W}_{\sigma_{n}})$ is decreasing, this yields the decrease condition \eqref{LMR58local}.
	The proof in the case  $W(z)< \frac{1}{2n_0}$, where  one can assume $n_0=n$,  can be obtained  again by Lemma \ref{L56O} and the arguments of  the proof of Lemma  \ref{L58}. 
\end{proof} 
The decrease condition \eqref{MRHv} follows now by the arbitrariness of $n$ and, consequently, we have that $\bar W$ is a $\p/2$-OMRF.

\end{document}